\declaretheoremstyle[
 headfont=\normalfont\bfseries,
 headindent= 0pt,
 bodyfont=\em,
 spaceabove=8pt,
 spacebelow=8pt
]{thm}
\declaretheoremstyle[
 headfont=\normalfont\em,
 headindent= 0pt,
 spaceabove=8pt,
 spacebelow=8pt
]{remark}
\declaretheoremstyle[
 headfont=\normalfont\bfseries,
 headindent= 0pt,
 spaceabove=8pt,
 spacebelow=8pt
]{example}
\declaretheoremstyle[
 headfont=\normalfont\bfseries,
 headindent= 0pt,
 spaceabove=8pt,
 spacebelow=8pt
]{definition}
\declaretheorem[name=Theorem,style=thm,numberwithin=section,
]{thm}
\declaretheorem[name=Lemma,style=thm,sibling=thm]{lem}
\declaretheorem[name=Corollary,style=thm,sibling=thm]{cor}
\declaretheorem[name=Example,style=example,sibling=thm]{example}
\declaretheorem[name=Definition,style=thm,sibling=thm]{defn}
\declaretheorem[name=Remark,style=remark]{rem}
\crefname{thm}{Theorem}{Theorems}
\crefname{prop}{Proposition}{Propositions}
\crefname{lem}{Lemma}{Lemmas}
\crefname{cor}{Corollary}{Corollaries}
\crefname{example}{Example}{Examples}
\crefname{defn}{Definition}{Definitions}
\crefname{rem}{Remark}{Remarks}
\crefname{equation}{}{}
\numberwithin{equation}{section}
\newcommand{\RR}{\mathbb{R}}
\newcommand{\CC}{\mathbb{C}}
\newcommand{\NN}{\mathbb{N}}
\newcommand{\gba}{\bar{g}}
\newcommand{\fba}{\bar{f}}
\newcommand{\wba}{\bar{w}}
\newcommand{\zba}{\bar{z}}
\newcommand{\Zba}{\overline{Z}}
\newcommand{\Lba}{\overline{L}}
\newcommand{\heis}{\mathbb{H}^3}
\newcommand{\lc}{\mathcal{X}}
\newcommand{\tfour}{{D^{\mathrm{IV}}_3}}
\newcommand{\tfourN}{{D^{\mathrm{IV}}_{n+1}}}
\newcommand{\tfourM}{{D^{\mathrm{IV}}_{m}}}
\newcommand{\mr}{{\mathcal{R}}}
\newcommand{\lba}{\bar{\lambda}}
\newcommand{\pba}{\bar{\phi}}
\newcommand{\vr}{\varrho}
\newcommand{\invar}{(\phi g + i f^2)}
\DeclareMathOperator{\Aut}{Aut}
\DeclareMathOperator{\trace}{tr}
\renewcommand{\Re}{\operatorname{Re}}
\renewcommand{\Im}{\operatorname{Im}}
\begin{document}
\title{On CR maps from the sphere into the tube over the future light cone}
\author{Michael Reiter}
\address{Fakult\"at f\"ur Mathematik, Universit\"at Wien, Oskar-Morgenstern-Platz 1, 1090 Wien, Austria}
\email{m.reiter@univie.ac.at}
\author{Duong Ngoc Son}
\address{Fakult\"at f\"ur Mathematik, Universit\"at Wien, Oskar-Morgenstern-Platz 1, 1090 Wien, Austria}
\email{son.duong@univie.ac.at}
\begin{abstract}
	We determine all local smooth or formal CR maps from the\break unit sphere $\mathbb{S}^3\subset \mathbb{C}^2$ into the tube $\mathcal{T}:= \mathcal{C} \times i\mathbb{R}^3 \subset \mathbb{C}^3$ over the future light cone $\mathcal{C}:= \left\{x\in \mathbb{R}^3\colon x_1^2+x_2^2 = x_3^2, \ x_3 > 0\right\}$. This result leads to a complete classification of proper holomorphic maps from the unit ball in $\mathbb{C}^2$ into Cartan's classical domain of type IV in $\mathbb{C}^3$ that extend smoothly to some boundary point. Up to composing with CR automorphisms of the source and target, the classification consists of four algebraic maps. Two maps among them were known earlier in the literature, which were shown to be ``rigid'' in the higher dimensional case in a recent paper by Xiao and Yuan. Two newly discovered quadratic polynomial maps provide counterexamples to a conjecture appeared in the same paper for the case of dimension two.
\end{abstract}

\date{June 11, 2021}
\subjclass[2000]{32H35, 32V40}
\thanks{The  authors were supported by the Austrian Science Fund (FWF): Projekt I3472 (Internationale Projekte-RSF/Russland 2017).}
\maketitle
\section{Introduction}
The study of proper holomorphic maps between balls goes back to Poincar\'e \cite{poincare1907fonctions} and Alexander \cite{alexander1977proper} who showed that the proper holomorphic self-maps of a ball in $\mathbb{C}^N$, $N\geqslant 2$, are precisely the  automorphisms which have explicit parametrizations. When the target is a ball of higher dimension, the discovery of inner functions implies that there are proper holomorphic maps between balls that do not extend to any boundary point \cite{aleksandrov1982existence,hakim1982fonctions,low1982construction,low1985embeddings}. On the other hand, by a well-known result of Forstneri\v{c} \cite[Theorem~1.4]{forstneric1989extending}, proper holomorphic maps between balls, which extend sufficiently smooth to a boundary point, extend to rational maps with no poles on the boundary and induce global CR maps between spheres. 

There is an interesting phenomenon for CR maps of spheres (i.e. \emph{sphere maps}) depending on the size of the \emph{codimension} (i.e. the difference of the CR dimensions of the source and target sphere.) When the codimension is ``low,'' the maps are ``rigid,'' which means that there is only the linear map up to composition with automorphisms, see \cite{Webster79b, faran1986linearity, Huang99}. When the codimension is not too large, the sphere maps may be completely classified \cite{faran1982maps,faran1986linearity,huang2001mapping,HJX06,HJY14}. In case there is no restriction on the codimension, the collection of sphere maps is known to be huge \cite{dangelo1988proper, Hamada05}. A particular interesting case is when the source is $\mathbb{S}^3\subset \mathbb{C}^2$. In this case, the smooth CR maps into $\mathbb{S}^5 \subset \mathbb{C}^3$ have been classified by Faran \cite{faran1982maps} as four ``spherical equivalence'' classes of maps. See also \cite{CS89, Ji10, Reiter16a} for different proofs. However, a classification of the maps from $\mathbb{S}^3$ into $\mathbb{S}^7$ is only available for monomial maps~\cite{dangelo1988proper}.

Besides the study of proper holomorphic maps of balls, there is a large literature devoted to the study of proper holomorphic maps between various models such as the complex ellipsoids, Reinhardt domains, and the classical domains. In particular, there are many works devoted to the study of proper holomorphic maps as well as  isometric holomorphic embeddings from one bounded symmetric domain into another by Mok, Ng, Xiao, Yuan and others, see, e.g. \cite{mok2012germs,xiao2020holomorphic}. In \cite{xiao2020holomorphic}, Xiao--Yuan give complete classifications of proper maps from $\mathbb{B}^n$ into $\tfourM \subset \CC^m$ (the Cartan's classical domain of type IV of $m$ dimension) in the ``low'' codimensional case (i.e., $4\leqslant n \leqslant m-1 \leqslant 2n-4$) and for the holomorphic isometries from $\mathbb{B}^n$ into $\tfourN  \subset \CC^{n+1}$ for all $n\geqslant 2$. When the codimension is higher, there are many more explicit examples of proper holomorphic maps given in \cite{xiao2020holomorphic}. However, in the case $n=2$ and $m=3$, only two equivalence classes of proper holomorphic maps were known (and they are all isometries), yet the classification problem was left open in these particular dimensions. We refer the reader to, e.g., \cite{Mok2011, Mok2018, mok2012germs,xiao2020holomorphic} and their references for more detailed information.

In this paper, we study local CR maps from $\mathbb{S}^3$ into the well-known \emph{tube over the future light cone} $\mathcal{T}$ of real dimension $5$ which has been of interest in many papers, e.g., \cite{ebenfelt1998normal, kaup2006local}. This is an everywhere Levi-degenerate 2-nondegenerate CR manifold (cf. \cite[Example~4.2.1]{ebenfelt1998normal}) which is homogeneous and has a ``large'' stability group; see, e.g., \cite{kaup2006local,fels2007cr}. Moreover, $\mathcal{T}$ is locally CR equivalent to the smooth boundary part $\mathcal{R}$ of $\tfour$ and our study provides a complete classification of proper holomorphic maps from $\mathbb{B}^2$ into $\tfour$ which extend smoothly to a boundary point. 
\begin{defn}
	Let $M$ and $M'$ be CR manifolds and let $H \colon (M, p) \to M'$, $\tilde H \colon (M, \tilde p)\to M'$ be germs of smooth CR maps at $p$ and $\tilde{p}$, respectively. We say that $H$ and $\tilde{H}$ are \emph{equivalent} if there exist germs of local CR diffeomorphisms $\gamma \colon (M, p) \to (M, \tilde p) $ and $\psi \colon (M', \tilde{H}(\tilde{p})) \to (M',H(p))$, such that
	\[
	H = \psi \circ \tilde H \circ \gamma^{-1}.
	\]
	We say that CR maps are \emph{equivalent} if they represent two equivalent germs at some interior points in their domains of definition. 
\end{defn}

In our main theorem below, we classify the germs of CR maps from $\mathbb{S}^3$ into $\mathcal{T}$ as five equivalence classes of map germs. Their representing maps have simple formulas when being expressed in the Heisenberg model $\heis := \left\{(z,w)\in \mathbb{C}^2\colon \Im w = |z|^2\right\}$ for $\mathbb{S}^3$ and the rational model $\lc$ for the tube given explicitly as follows.
\[
\lc : = \left\{(z,w)\in \mathbb{C}^2 \colon v = \frac{|z|^2 + \Re (z^2 \bar{\zeta})}{1 - |\zeta|^2}, \ |\zeta|^2 <1\right\}, \quad w = u+ iv.
\]
An explicit local equivalence of $\lc$ and $\mathcal{T}$ is given by Fels and Kaup in \cite[Proposition~4.16]{fels2007cr}. 
\begin{thm}\label{thm:main1}
Let $U$ be an open subset of $\heis$ and $H \colon U \to \lc$ a smooth CR map. Then the following hold:
\begin{enumerate}[(a)]
\setlength{\itemsep}{3pt}
\item If $H$ is CR transversal at some point $p\in U$, then $H$ is CR transversal on $U$, the germs $(H,q)$, $q\in U$, are mutually equivalent and are equivalent to exactly one of the following four pairwise inequivalent germs at the origin:
\begin{enumerate}[(i)]
	\setlength{\itemsep}{3pt}
	\item $\ell(z,w) = (z,0,w)$,
	\item $r_1(z,w) = \left(\dfrac{z(1+i w)}{1-w^2},\dfrac{2 z^2}{1-w^2},\dfrac{w}{1-w^2}\right)$,
	\item $r_{-1}(z,w) = \left(\dfrac{z(1-i w)}{1-w^2},\dfrac{-2 z^2}{1-w^2},\dfrac{w}{1-w^2}\right) $,
	\item $\iota (z,w) = \left(2 z,\  2 w,\ 2 w\right)\bigl/ \left(1+\sqrt{1-4 w^2-4 i z^2}\right)$.
\end{enumerate}
Consequently, $H$ extends as an algebraic map from $\CC^2$ to $\CC^3$ sending $\heis$ into $\lc$.
\item If $H$ is nowhere CR transversal, then for each $q$, the germ $(H,q)$ is equivalent to the germ at the origin of a map $t_q \colon (z,w) \mapsto (0, \phi_q(z,w),0)$ for a smooth CR function $\phi_q$ defined in a neighbourhood of the origin.
\end{enumerate}
\end{thm}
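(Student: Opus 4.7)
The proof separates along the dichotomy. Writing $H = (f,g,\phi)$, membership $H(\heis) \subset \lc$ is equivalent to the identity
\[
(\phi - \bar\phi)(1-|g|^2) \;=\; 2i\bigl(|f|^2 + \Re(f^2 \bar g)\bigr)
\]
holding on $\heis$, i.e.\ after substituting $w = u + i|z|^2$. The right-hand side is bounded below by $2i\cdot|f|^2(1-|g|)$, so in particular $\Im \phi \geqslant 0$ along $\heis$. A short computation identifies CR transversality of $H$ at $p$ with $\Re \phi_w(p) \ne 0$, and a Hopf-type argument (since $H$ sends the pseudoconvex side of $\heis$ into the pseudoconvex side of $\lc$) shows the sign is then positive at every transversal point.

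For part (b), if $H$ is nowhere CR transversal on $U$, then $\Re \phi_w \equiv 0$ on $U$. A direct analysis of the defining identity, combined with the non-negativity of its right-hand side and the CR equation for $\phi$ (the holomorphic extension of $\phi$ to the pseudoconvex side of $\heis$ has vanishing real normal derivative on the boundary), forces first $\phi \equiv 0$ and then $f \equiv 0$ on $U$. What remains is $H = (0, g, 0)$ with $g$ an arbitrary smooth CR function subject only to $|g| < 1$, which is exactly the asserted form $t_q$.

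For part (a), I would first use transitivity of $\Aut(\lc)$ on its smooth locus \cite{kaup2006local, fels2007cr} and of $\Aut(\heis)$ on $\heis$ to reduce to $p = 0 \in \heis$ and $H(0) = 0 \in \lc$. The plan is then to (i) normalize the low-order Taylor coefficients of $(f, g, \phi)$ by composing with explicit elements of the isotropy subgroups $\Aut(\heis, 0)$ and $\Aut(\lc, 0)$, (ii) insert the normalized formal expansion into the defining identity, and (iii) solve the resulting hierarchy of polynomial equations on the coefficients order by order. CR transversality fixes the linear part of $\phi$ after one further normalization. I expect the main discrete invariant to sit in the quadratic part of $g$: its vanishing yields the linear embedding $\ell$, its non-degenerate sign yields the quadratic maps $r_{\pm 1}$, and a resonant compatibility branch yields the algebraic map $\iota$. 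Propagation of CR transversality from one point to all of $U$, and mutual equivalence of the resulting germs on $U$, follow from continuity of $\Re \phi_w$ together with the observation that each of the four model maps is globally CR transversal.

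The main obstacle is the normal form step in (a). Because $\lc$ is 2-nondegenerate with a comparatively large stability group, many Taylor coefficients can and must be simultaneously normalized; meanwhile the factor $1/(1-|g|^2)$ couples coefficients across orders, obstructing a clean degree-by-degree hierarchy. Carefully tracking the isotropy action on the jets of $(f, g, \phi)$, e.g.\ via an explicit Fels--Kaup type parametrization of $\Aut(\lc, 0)$, is where the real work lies. Once a rigid residual system is obtained, the branching into the four cases and the verification of pairwise inequivalence reduce to direct (if tedious) algebraic computations, for instance at the level of jets of basic CR invariants at the origin.
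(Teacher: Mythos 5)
Your overall architecture (reduce to germs at $0$ by homogeneity, normalize jets by the isotropy groups, feed the expansion into the defining identity, split off the nowhere-transversal case) is the same skeleton the paper uses, and your part (b) sketch is structurally consistent with the paper's Lemma~\ref{lem:nontrans}: there, setting $\zba=\wba=0$ in the (complexified) identity kills the transversal component, and a weighted-homogeneous induction then kills the first component, leaving the middle component free. But even in (b) your deduction is asserted rather than proved: ``a direct analysis \ldots forces first $\phi\equiv 0$ and then $f\equiv 0$'' hides exactly the induction on weighted order that the paper has to run, and your Hopf-lemma/normal-derivative language does not substitute for it, since the relevant identity must be treated as a formal power series identity in all four variables (minimality of $\heis$), not as a boundary sign condition.

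The genuine gap is in part (a), which is the actual content of the theorem. Your plan stops at ``solve the resulting hierarchy of polynomial equations on the coefficients order by order,'' but there is no argument that this hierarchy closes, no identification of which jet determines the map, and no mechanism for passing from a finite jet to an actual (algebraic) map sending $\heis$ into $\lc$. The paper's proof needs three ingredients you do not supply: (1) the map is reconstructed along the Segre sets --- Lemma~\ref{lem:first} shows $H|_{\{w=0\}}$ is an explicit algebraic function of the $2$-jet, and by $2$-nondegeneracy the $4$-jet at $0$ determines everything; (2) pinning down the admissible $4$-jets is \emph{not} a low-order computation: in the case $\lambda=0$ one must exploit solvability (Kronecker--Capelli) conditions and weighted-homogeneous terms up to weight $16$, and \cref{ex:holEqUpTo14} exhibits a holomorphic map whose coefficients satisfy every equation up to weight $14$ except three, as well as the functional equations of Lemma~\ref{lem:holeq2}, yet does not send $\heis$ into $\lc$ --- so an ``order by order'' scheme has no a priori stopping point and your expectation of a ``rigid residual system'' after normalization is exactly what must be proved; (3) the final maps are obtained by solving explicit holomorphic functional equations (e.g.\ \eqref{eq1}--\eqref{eq:hol3}), which is also what yields algebraicity of $H$; nothing in your outline produces the irrational branch $\iota$ or excludes further branches. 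Relatedly, the branching invariant is not ``the quadratic part'' alone: in the paper's normal form \eqref{eq:normalform} the first dichotomy is $\lambda\neq 0$ (leading, after showing $\mu=\nu=\sigma=\alpha=0$, to $\iota$) versus $\lambda=0$ (leading to the family $r_\beta$, with $\beta=0$ giving $\ell$ and $\beta\gtrless 0$ giving $r_{\pm1}$). Finally, your propagation argument for transversality and for mutual equivalence of the germs $(H,q)$, $q\in U$, via ``continuity of $\Re\phi_w$'' is circular as stated: in the paper these follow from the classification itself (each model map is CR transversal everywhere on its domain and represents mutually equivalent germs at all points), not from a separate soft argument. As it stands, the proposal is a plausible strategy statement, not a proof of the classification.
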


\begin{rem}
The smoothness assumption in \cref{thm:main1} can be weakened to $C^2$-smoothness by a result of Kossovskiy--Lamel--Xiao \cite{KLX21}. Regularity of CR maps into the tube over the future light cone has been studied in \cite{Mir17, Xiao20, Greilhuber20}. Similar statements hold for germs of formal holomorphic maps sending $\heis$ into $\lc$; we omit the details.
\end{rem}

As briefly mentioned above, this theorem implies a complete classification of proper holomorphic maps from $\mathbb{B}^2$ to the classical domain $\tfour$ that extend smoothly to some smooth boundary point. Recall \cite[\S V]{cartan1935domaines} that the Cartan's classical domain of type IV in dimension $m$, denoted by $\tfourM$, is the domain of $m$-dimensional vectors 
\[
Z = (z_1,z_2,\dots, z_m ) \in \mathbb{C}^m
\] 
satisfying the inequalities
\[
1- 2 Z \Zba^t + \left|Z Z^t\right|^2 > 0, \quad \left| ZZ ^t \right| < 1,
\]
where $Z^t$ is the $m\times 1$ transpose of $Z$. This domain is also called the \textit{Lie ball} and the \textit{complex sphere} in the literature. This domain is homogeneous: the global autormophism group $\Aut(\tfour)$ acts transitively on it. We say that two proper holomorphic maps $H,\widetilde{H}\colon \mathbb{B}^2\to \tfour$ are \emph{equivalent} if 
there exist $\psi\in \Aut(\tfour)$ and $\gamma \in \Aut(\mathbb{B}^2)$ such that $\widetilde{H}:=\psi \circ H\circ \gamma^{-1}$. Then we obtain a classification of proper holomorphic maps from $\mathbb{B}^2$ to $\tfour$ as follows.
\begin{cor}\label{cor:balld4}
Let $H\colon \mathbb{B}^2 \to \tfour$ be a proper holomorphic map which extends smoothly to some boundary point $p\in \partial \mathbb{B}^2$. 
Then $H$ is equivalent to exactly one of the following four pairwise inequivalent maps:
\begin{enumerate}[(i)]
	\setlength{\itemsep}{3pt}
	\item $R_0(z,w) = \left(\dfrac{z}{\sqrt{2}},\dfrac{2 w^2+2 w-z^2}{4 (w+1)},\dfrac{i \left(2 w^2+2 w+z^2\right)}{4 (w+1)}\right)$,
	\item $P_1(z,w) = \left(zw, \dfrac{z^2-w^2}{2}, \dfrac{i(z^2+w^2)}{2}\right)$,
	\item $P_{-1}(z,w) = \left(z, \dfrac{w^2}{2}, \dfrac{iw^2
	}{2}\right)$,
	\item $I(z,w) = \left(z,w,1-\sqrt{1-z^2-w^2}\,\right)\bigl/ \sqrt{2}$.
\end{enumerate}
\end{cor}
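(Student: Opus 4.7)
The plan is to reduce \cref{cor:balld4} to \cref{thm:main1}. Let $H\colon \mathbb{B}^2\to\tfour$ be a proper holomorphic map extending smoothly to some $p\in\partial\mathbb{B}^2$; then $H(p)\in\partial\tfour$, whose smooth 2-nondegenerate stratum $\mr$ is locally CR equivalent to $\mathcal{T}$ and, via the Fels--Kaup formula, to $\lc$. The first step is to reduce to the case $H(p)\in\mr$: the complement of $\mr$ in $\partial\tfour$ is a strictly lower-dimensional (Shilov) stratum, and if every smooth extension point of $H$ were mapped into this stratum, then an open piece of $\partial\mathbb{B}^2$ would sit inside a lower-dimensional CR orbit of $\partial\tfour$, which is incompatible with the openness of the holomorphic map $H$.

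Second, combining the Cayley transform on the source with the chain $\mr\simeq\mathcal{T}\simeq\lc$ on the target, I would view the boundary restriction of $H$ near $p$ as a smooth CR map germ from $(\heis,0)$ to $(\lc,0)$. A standard Hopf-lemma argument applied to $\rho\circ H$, for a defining function $\rho$ of the convex domain $\tfour$ that is negative on the interior, forces $H$ to be CR transversal at $p$. Hence case (a) of \cref{thm:main1} applies and the germ is equivalent to exactly one of $\ell$, $r_1$, $r_{-1}$, $\iota$.

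Third, I would globalize. Each of these four germs extends to an algebraic map $\CC^2\to\CC^3$ sending $\heis$ into $\lc$; composing with the inverse Fels--Kaup formula and the inverse Cayley transform produces four explicit global holomorphic maps $\mathbb{B}^2\to\tfour$. A direct computation then shows that, after absorbing suitable elements of $\Aut(\tfour)$ and $\Aut(\mathbb{B}^2)$, these four maps coincide with $R_0$, $P_1$, $P_{-1}$ and $I$ respectively. This step relies on the classical extension of local CR automorphisms of $\mathbb{S}^3$ to elements of $\Aut(\mathbb{B}^2)$, together with the analogous extension property for local CR self-equivalences of $\mr$, the latter following from homogeneity of $\mr$ under $\Aut(\tfour)$ and the description of the stability group of $\mathcal{T}$ in \cite{kaup2006local,fels2007cr}.

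Pairwise inequivalence of $R_0,P_1,P_{-1},I$ is then immediate, since any global equivalence between two of them would induce, by restriction at a boundary point, a local CR equivalence of the corresponding boundary germs, contradicting the inequivalence asserted in \cref{thm:main1}. The main obstacle is the bookkeeping in the third step: one must compose the Cayley transform, the Fels--Kaup equivalence, the four algebraic extensions, and the normalizing automorphisms, and then simplify to recover the tabulated closed-form expressions.
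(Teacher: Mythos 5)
Your overall strategy is the same as the paper's: transfer $H$ near a boundary point to a CR map germ $(\heis,0)\to(\lc,0)$ via the Cayley transform and the Fels--Kaup equivalence $\Phi$, invoke \cref{thm:main1} to land in one of the classified germ classes, rule out the degenerate class using properness, and then globalize using the fact that local CR diffeomorphisms of $\mathbb{S}^3$ and of $\mr$ extend to automorphisms of $\mathbb{B}^2$ and of $\tfour$ (the latter is exactly \cref{thm:alex}, which the paper proves from the explicit stability group and the Bergman kernel). Your only genuine deviation is how the degenerate possibility is excluded: you derive CR transversality at $p$ from a Hopf-lemma/supporting-hyperplane argument (legitimate here, since $\tfour$ is convex and, once $H(p)\in\mr$, the supporting hyperplane at $H(p)$ is the tangent hyperplane of the smooth boundary, so nonvanishing of the normal derivative of $\Re\langle H-H(p),\nu\rangle$ is equivalent to nonvanishing of the transversality factor), whereas the paper instead uses part (b) of \cref{thm:main1}: a nowhere transversal germ forces the boundary values into a complex curve, which is incompatible with properness. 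Both routes work; yours avoids part (b), the paper's avoids the Hopf lemma.

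The one step whose justification does not hold up as written is your reduction to $H(p)\in\mr$. The complement of $\mr$ in $\partial\tfour$ is the Shilov boundary, which has real dimension $3$ --- the same as $\mathbb{S}^3$ --- so it is only ``lower-dimensional'' relative to $\partial\tfour$, and there is no dimension obstruction to a smooth map sending an open piece of $\mathbb{S}^3$ into it; moreover ``openness of $H$'' is not available (a holomorphic map $\mathbb{B}^2\to\CC^3$ is nowhere open, and openness on the interior says nothing about where the boundary piece lands). The correct argument is that the Shilov boundary is totally real: if a neighbourhood $U\subset\mathbb{S}^3$ of $p$ were mapped into it, then for $\bar L$ a CR vector field one gets $\bar L H_j=LH_j=0$ on $U$, and strict pseudoconvexity (minimality) of $\mathbb{S}^3$ forces $H|_U$ to be constant, hence $H$ constant by boundary uniqueness, contradicting properness; so some nearby point maps into $\mr$ and one may normalize as the paper does ($H(p)=(0,\tfrac12,\tfrac i2)$). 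With this repair --- and with the explicit identification of the transferred germs $\ell, r_{\pm1},\iota$ with $R_0,P_{\pm1},I$ actually carried out, which you defer to a ``direct computation'' and which the paper performs via the formulas $R_0=-\Phi\circ\ell\circ\mathcal{C}\circ\gamma_0$, the family $R_\alpha$, and \cref{ex:linmap} --- your proof is complete; your inequivalence argument by restriction to boundary germs is a valid shortcut around the paper's Ahlfors-tensor computation.
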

\begin{rem}\label{rem2}
The maps $R_0$ and $I$ were known earlier in the classification of isometric embeddings, studied in, e.g.,  \cite{chan2017holomorphic,upmeier2019holomorphic}, by methods that are very different from ours. They also appeared in \cite{xiao2020holomorphic} which studies the rigidity of transversal holomorphic maps sending a piece of the sphere into the smooth boundary part of a type IV domain in higher dimension. They are essentially the only isometric embeddings (up to a normalizing constant) of the respective Bergman metrics on $\mathbb{B}^2$ and $\tfour$. The quadratic polynomial maps $P_{\pm 1}$ are not isometric and provide counterexamples to Conjecture 2.9 in \cite{xiao2020holomorphic} of Xiao and Yuan in the case the source has dimension $n= 2$ (the case $n=3$ and the case of $C^1$-extendability are still open). In fact, there is an 1-parameter smooth family of rational proper maps $R_{\alpha}$, $\alpha \in \RR$, such that $R_{\alpha} \sim P_1$ for $\alpha > 0$ and $R_{\alpha}  \sim P_{-1}$ for $\alpha <0$, yet $R_0$ is not equivalent to $R_{\alpha}$ for $\alpha\ne 0$. In essence, the map $R_0$ is not ``locally rigid'' (see \cite{della2020sufficient} for a recent discussion on the local rigidity of CR maps).  This exhibits an important difference of our setting from the case $n\geqslant 4$ and the case of isometries treated in \cite{xiao2020holomorphic,upmeier2019holomorphic,chan2017holomorphic} in which the rigidity holds.

It is immediate to see that these four maps are pairwise inequivalent. Indeed, $R_0$ and $I$ (which is irrational) have singularities on the sphere, the image of $\mathbb{S}^3$ via $P_1$ lies entirely in the smooth part $\mr \subset\partial \tfour$, while $P_{-1}$ sends the circle $\{(e^{it},0)\} \subset \mathbb{S}^3$ into the singular part of $\partial\tfour$ and sends $\mathbb{S}^3 \setminus \{(e^{it},0)\}$ transversally into $\mr$. Hence, the inequivalences are evident.

Our proof of Corollary~\ref{cor:balld4} is independent from that of the classification of proper holomorphic maps and isometric embeddings in \cite{chan2017holomorphic,upmeier2019holomorphic,xiao2020holomorphic}.
\end{rem}
\begin{rem}
Corollary~\ref{cor:balld4} is analogous to Faran's well-known classification of maps from $\mathbb{B}^2$ to $\mathbb{B}^3$ as a list of four equivalence classes of maps \cite{faran1982maps} (see also \cite{Reiter16a,Ji10}). We point out, however, that there is a difference when considering \emph{local} equivalences of map germs. For each map $H\in \{R_0, P_1, P_{-1}, I\}$ and each $p, p'\in \mathbb{S}^3$ such that $H(p), H(p') \in \mr$, the germs $(H,p)$ and $(H,p')$ are equivalent. In contrast, the sphere map $(z,w)\mapsto(z,zw,w^2)$ from $\mathbb{B}^2$ to $\mathbb{B}^3$ represents inequivalent germs when the base point $p$ varies on and off the circle of ``umbilical points'' $\{(0,e^{i\theta}) \in \mathbb{S}^3\colon t\in \mathbb{R}\}$. The sphere map $(z,w) \mapsto (z^3,\sqrt{3} zw, w^3)$ also exhibits a similar phenomenon, see  \cite{Reiter16a} and \cite[Table~1]{lamel2019cr}. 
\end{rem}
\begin{rem}	
There is a convenient way to construct proper holomorphic maps from a ball into a type IV domain of certain dimension as first noticed in \cite{xiao2020holomorphic}. In essence, it relies on an equation of the form
\begin{equation}\label{xy}
2\sum_{j=1}^m |h_j|^2 - \left|\sum_{j=1} ^mh_j^2\right| ^2 = \sum _{j=1}^{m'} |f_j|^2,
\end{equation}
where $H =(h_1, h_2, \dots, h_m)$ and $F= (f_1,f_2,\dots, f_{m'})$ are two holomorphic maps defined on some relevant open set of $\CC^n$. If $H$ maps $\mathbb{S}^{2n-1}$ into $\partial\tfourM$, then $F$ maps $\mathbb{S}^{2n-1}$ into $\mathbb{S}^{2m'+1}$ and vice versa, if $F$ maps $\mathbb{S}^{2n-1}$ into $\mathbb{S}^{2m'+1}$, then $H$ maps $\mathbb{S}^{2n-1}$ into the real variety $\{\vr_{\tfourM} = 0\}$ which contains $\partial\tfourM$. Equation \eqref{xy} implies that $H|_{\mathbb{S}^{2n+1}}$ and $F|_{\mathbb{S}^{2n+1}}$ have the same Ahlfors tensor (defined in Section 2). If we take $H=P_1$, then \eqref{xy} holds for $F(z,w) = (z^2, \sqrt{2} zw, w^2)$, which maps $\mathbb{S}^3$ into $\mathbb{S}^5$. Both maps have Ahlfors tensor equal to $1/2$ (when evaluated on an appropriate frame). If $H=P_{-1}$, then there is no map $F\colon \mathbb{S}^3\to \mathbb{S}^5$ such that \eqref{xy} holds, since $P_{-1}$ has a negative Ahlfors tensor, cf. \cite[Example 2.8]{xiao2020holomorphic}, which can never be satisfied by any sphere map, see \cite{lamel2019cr}.
\end{rem}
Each of the isometric embeddings $R_0$ and $I$ restricts to a CR map on an open dense subset of $\mathbb{S}^3$ having the property that a certain coefficient in its partial normal form (given below in \eqref{eq:normalform}) vanishes. In this case we say that the map has vanishing \emph{geometric rank} at the centered point. The notion of geometric rank in our present setting is similar to that of sphere maps, first used by Huang \cite{Huang99}, see also \cite{Huang03}. It also appears recently in \cite{huang2020boundary} as a boundary characterization for isometric embeddings into indefinite hyperbolic space. We define this invariant notion for CR transversal maps into the tube over the future light cone precisely in Definition~\ref{def:gr}. By inspecting the special case of vanishing geometric rank in the proof of Theorem~\ref{thm:main1}, we can observe a phenomenon that is similar to the main result in \cite{huang2020boundary}.
\begin{cor}
	Let $H\colon \mathbb{B}^2 \to \tfour$ be a proper holomorphic map. If $H$ extends smoothly to a neighbourhood of a point $p\in \mathbb{S}^3$ and has vanishing geometric rank at $p$, then $H$ is a holomorphic isometry. Conversely, each isometric embedding from $\mathbb{B}^2$ into $\tfour$ extends to a real-analytic CR map from an open dense proper subset of the sphere $\mathbb{S}^3$ to the boundary of $\partial \tfour$ with vanishing geometric rank at every point and is everywhere CR transversal in its domain.
\end{cor}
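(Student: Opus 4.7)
The plan is to deduce both directions from \cref{cor:balld4} combined with the classification of holomorphic isometries from $\mathbb{B}^2$ to $\tfour$ already available in the literature. Since the geometric rank is an invariant of the equivalence class of the map germ (as it is defined by the vanishing of a distinguished coefficient in the partial normal form \eqref{eq:normalform}), the forward direction reduces to computing this coefficient for the four representatives $R_0$, $P_1$, $P_{-1}$, and $I$.

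For the forward direction, given $H$ as in the hypothesis, first apply \cref{cor:balld4} to reduce to the case where $H$ is one of the four model maps. After passing from the bounded models $\mathbb{B}^2$ and $\tfour$ to the unbounded models $\heis$ and $\lc$ via the Cayley-type equivalences used in the proof of \cref{thm:main1}, I would expand each map at a CR transversal boundary point and put it into the partial normal form. The expected outcome, consistent with the analogy to Huang's sphere theory \cite{Huang99} and with \cref{rem2}, is that $R_0$ and $I$ yield a vanishing coefficient at every CR transversal base point, whereas $P_1$ and $P_{-1}$ yield a nonzero coefficient everywhere they are CR transversal. Under the vanishing geometric rank hypothesis at $p$, this excludes $H\sim P_{\pm 1}$, leaving $H\sim R_0$ or $H\sim I$; both are known to be holomorphic isometries by \cite{chan2017holomorphic,upmeier2019holomorphic}, and the isometric property is preserved under composition with elements of $\Aut(\mathbb{B}^2)$ and $\Aut(\tfour)$, giving the desired conclusion.

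For the converse, the existing classifications cited in \cref{rem2} show that every holomorphic isometry $\mathbb{B}^2\hookrightarrow\tfour$ is equivalent to $R_0$ or $I$. For each representative, the singular locus in $\mathbb{S}^3$ is a proper real-analytic subvariety: for $R_0$ it is the circle where $w=-1$ in the form written in \cref{cor:balld4}, and for $I$ it is the ramification set cut out by $z^2+w^2=1$. Its complement $U\subset\mathbb{S}^3$ is therefore open and dense, and on $U$ the map extends as a real-analytic CR map into the smooth boundary part $\mr$ of $\tfour$. CR transversality at each point of $U$ can be verified directly from the explicit formulas by checking that $dH(T^{1,0}_q\mathbb{S}^3)$ is not contained in $T^{1,0}_{H(q)}\mr$, and the vanishing of the geometric rank at every point of $U$ then follows from the same computation done in the forward direction, transported between base points using the homogeneity of $\mathcal{T}$ under its CR automorphism group.

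The principal obstacle is the explicit computation of the geometric rank coefficient for each of the four model maps, since this requires bringing them into the partial normal form \eqref{eq:normalform} at an arbitrary CR transversal point and carefully isolating the invariant component of the expansion from the many gauge-dependent terms produced by the normalizing source and target automorphisms. However, the normalizing transformations needed are precisely those developed earlier in the paper to prove \cref{thm:main1}, so the calculation should amount to algebraic bookkeeping rather than presenting a genuinely new difficulty.
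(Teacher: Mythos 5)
Your proposal is correct and follows essentially the same skeleton as the paper's deduction: reduce via \cref{cor:balld4} to the four model maps, use the invariance of the geometric rank under equivalence to rule out $P_{\pm1}$, quote the isometry classification of \cite{chan2017holomorphic,upmeier2019holomorphic} (as in the paper's Remark after \cref{cor:balld4}) for the converse, and verify the extension and transversality properties of $R_0$ and $I$ directly. The one genuine difference lies in how the key computation --- that $P_{\pm1}$ have nonvanishing geometric rank at every CR transversal boundary point while $R_0$ and $I$ have vanishing rank everywhere --- is carried out. You propose to bring each model into the partial normal form \eqref{eq:normalform} at an arbitrary base point, which you correctly flag as the main labour; the paper instead uses the Ahlfors-type tensor of Section~\ref{sec:ahlfors} together with the identity $\mathcal{A}(H)(L,\overline{L})\bigl|_0=\alpha/2$ from Section~\ref{sec:normalform}, so that the rank at \emph{all} boundary points is read off from a single defining-function identity: $\vr_{\tfour}\circ P_1=(1+|z|^2+|w|^2)\,\vr_{\mathbb{S}^3}$ gives $\mathcal{A}(P_1)=1/2$, $\vr_{\tfour}\circ P_{-1}=(1-|z|^2+|w|^2)\,\vr_{\mathbb{S}^3}$ gives $\mathcal{A}(P_{-1})=-1/(2|w|^2)\ne 0$, and $\vr_{\tfour}\circ R_0=\vr_{\mathbb{S}^3}$, $\vr_{\tfour}\circ I=\vr_{\mathbb{S}^3}$ give vanishing tensor, with no per-point renormalization needed. (Equivalently, the forward direction can be read off the proof of \cref{thm:main1}: the normal-form parameter $\alpha=0$ occurs only for the germs equivalent to $\ell$ and $\iota$, i.e.\ to $R_0$ and $I$.) Two small points: the geometric rank at $p$ is only defined when $H$ is CR transversal at $p$ and $H(p)$ lies in the smooth boundary part (Definition~\ref{def:gr}), which should be stated when you invoke it; and the singular locus of $R_0$ on $\mathbb{S}^3$ is the single point $(0,-1)$, not a circle (for $I$ the branch locus is indeed the real circle $\{(\cos t,\sin t)\}$), though this does not affect the density argument.
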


We conclude this introduction by describing our approach in the proof of Theorem~\ref{thm:main1} and the main difficulties we need to overcome. Similarly to, e.g. \cite{Reiter16a,xiao2020holomorphic}, we analyze the ``mapping equation'' for the components of a map $H$ which expresses the fact that $H$ sends $\heis$ into $\lc$. But there are differences to the positive but ``low'' codimension case, as treated in \cite{xiao2020holomorphic}, when ``rigidity'' holds, i.e., when there are only ``few'' equivalence classes of maps: An essential ingredient for the rigidity is Huang's lemma \cite{Huang99} and the fact that the maps are all of vanishing ``geometric rank''. In our present setting (as well as in the case of sphere maps from $\mathbb{S}^3$ to $\mathbb{S}^5$, cf. \cite{faran1982maps,Reiter16a}) Huang's lemma is not available and rigidity \emph{fails}. Thus, the solution set in the present setting is expected to be rather ``large'' and we need to solve the mapping equation for an \emph{a priori} unknown number of (discrete or continuous families of) solutions.

To solve the mapping equation, we first normalize the map (the ``unknown'') to a specific form, given in \eqref{eq:normalform}, using explicit formulas for the local CR automorphisms of the source and target (\cref{lem:stab}). In this partial normal form, the mapping equation can be viewed as either a system of infinitely many (linear and nonlinear) equations for the coefficients in the Taylor series expansions of the components the map (the jets of the map at the origin) or a system of infinitely many equations arising from differentiation and evaluation along the first Segre variety $\Sigma:=\{(z,w)\in \mathbb{C}^2\colon w=0\}$ of $\mathbb{H}^3$ at the origin. By the $2$-nondegeneracy of $\lc$, the jets of the map of all orders are determined once the $4$-jet is determined. Such a behavior is also observed in the sphere case, see \cite{Reiter16a}.

A bulk of the present paper is to identify the $4$-jets at the origin that arise from maps sending $\heis$ into $\lc$. This process involves analyzing the mapping equation and various derived ones. It turns out that the $4$-jets of the maps can only be fully identified by analyzing all equations arising from the Taylor series expansions upto weighted order $16$. There are examples of maps which satisfy all but three of weight $14$ (\cref{ex:holEqUpTo14}). This shows that considering such high orders is in some sense necessary. Once we determine the $4$-jets of the maps at the origin, and at the same time, the $2$-jets along the first Segre variety, we can construct a system of three holomorphic functional equations depending on two real parameters whose unique and explicit solutions, as presented in Theorem~\ref{thm:main1}, are genuine maps sending $\heis$ into $\lc$.

The computational complexity also poses some difficulties. To overcome this issue, we use the computer algebra system Mathematica when doing some formal differentiations, collecting terms in polynomial expressions, and computing determinants of matrices of some (relatively small) size. But we want to point out, that the operations and expressions involved in the proofs are in principle simple and elementary enough to be checked by hand.

The paper is organized as follows: In \cref{sec:prelim} we provide the local representations of the tube over the future light cone, its automorphisms and an Ahlfors-type tensor, which allows us to distinguish equivalence classes of maps. In \cref{sec:normalform} we give a partial normal form for the maps and introduce the geometric rank in this setting. In \cref{sec:proofs} we give the proof of our main result \cref{thm:main1} and in \cref{sec:examples} we discuss additional examples of maps and provide explicit biholomorphisms to show equivalence to the maps listed in \cref{thm:main1}.

\section{Preliminaries}
\label{sec:prelim}
\subsection{Models for the tube over the future light cone}
Following \cite{fels2007cr} we would like to describe representations of the tube over the future light cone. A model for the three-dimensional space time is the space $V$ of real symmetric $2\times 2$-matrices where the time coordinate is the normalized trace of the matrices. In this model, the \emph{future cone} is
\[
	\Omega:=\left\{v = \begin{pmatrix}
		t + x_1 & x_2 \\ x_2 & t -x_1
	\end{pmatrix} \colon v \text{ positive definite }\right\}.
\]
The smooth boundary part of $\Omega$ is the \emph{future light cone} $\mathcal{C}$ is given explicitly by
\[
	t^2 = x_1^2 + x_2^2, \quad t> 0.
\]
To construct the tube over $\mathcal{C}$, one identifies $V \oplus iV$ with the space of complex symmetric $2\times 2$-matrices. The \emph{tube over the future light cone} is given by
\[
	\mathcal{T} : = \mathcal{C}\oplus iV = \left\{ z\in V \oplus iV \colon \det(z + \zba) = 0, \ \Re \trace (z+ \zba) >0\right\}.
\]
The tube $\mathcal{T}$ is the smooth boundary part of a tube domain with everywhere degenerate Levi-form. In fact, it is everywhere $2$-nondegenerate as a CR manifold; see \cite[Example~4.2.1]{ebenfelt1998normal}. It has been studied in many papers \cite{ebenfelt1998normal,kaup2006local,fels2007cr}. Our interest in this tube is that it is \emph{holomorphically homogeneous}, i.e., its CR automorphism group acts transitively on it, and has quite ``large'' stability group at each point. Moreover, from Fels--Kaup  \cite{fels2007cr}, $\mathcal{T}$ has a local rational model similar as in the case of the sphere. Namely, at every point it is locally CR equivalent to the real hypersurface $\lc$ near the origin in $\mathbb{C}_{z,\zeta,w}$ given by
\[
v = \frac{|z|^2 + \Re (z^2 \bar{\zeta})}{1 - |\zeta|^2}, \quad w = u+ iv.
\]
In fact, the rational map
\[
	\Phi: (z,\zeta, w)\mapsto
	\frac{1}{1+ \zeta}\begin{pmatrix}
		z^2-i \zeta  w-i w & \sqrt{2} w \\
		\sqrt{2} w & 1-\zeta  \\
	\end{pmatrix}
\]
sends $\lc$ into $\mathcal{T}$ \cite[Proposition 4.16]{fels2007cr} and is locally biholomorphic on a dense set of $\mathbb{C}^3$. 
In this model, the automorphism group and the stability group have quite simple representations. In this paper, we shall work extensively with this model.

For the tube model, the maps are given as follows.
\begin{thm} Let $U\subset \mathbb{H}^3$ be a connected open set and $H\colon U \to \mathcal{T}$ be a smooth CR map. Then
\begin{enumerate}[(a)]
	\item If $H$ is CR transversal at some point $p\in U$, then for all $q\in U$, the germs $(H,q)$ are mutually equivalent and is equivalent to one of the following germs at the origin:
\begin{enumerate}[(i)]
\item $\mathcal{R}_0(z,w) = \begin{pmatrix}
	z^2-i w & \sqrt{2} z \\
	\sqrt{2} z & 1  \\
\end{pmatrix}$,
\item $\mathcal{R}_1(z,w) = \dfrac{1}{1 - w^2 + 2 z^2}
\begin{pmatrix}
	z^2-i w & \sqrt{2}z (1+i w) \\
	\sqrt{2}z (1+i w) & 1-w^2-2 z^2 \\
\end{pmatrix}$,
\item $\mathcal{R}_{-1}(z,w) = \dfrac{1}{1 - w^2 - 2 z^2}
\begin{pmatrix}
	z^2-i w & \sqrt{2}z (1-i w) \\
	\sqrt{2}z (1-i w) & 1-w^2+2 z^2 \\
\end{pmatrix}$,
\item $\mathcal{I}(z,w) = \dfrac{1}{1+2w+\varepsilon}\begin{pmatrix}
-i (1+2 w-\varepsilon)& 2\sqrt{2}z \\
2\sqrt{2}z & 1-2 w+\varepsilon \\
\end{pmatrix}$, $\varepsilon := \sqrt{1-4 w^2-4 i z^2}$.
\end{enumerate}
	\item If $H$ is nowhere CR transversal, then for each $p \in U$, $(H,p)$ is equivalent to a germ at the origin of the form $\phi(z,w) = \begin{pmatrix}
		0 & 0 \\
		0 & \varphi \\
	\end{pmatrix}$
	for some  smooth CR function $\varphi$.
\end{enumerate}
\end{thm}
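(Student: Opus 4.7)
The plan is to deduce this theorem as a direct transfer of \cref{thm:main1} from the rational model $\lc$ to the tube model $\mathcal{T}$ via the explicit Fels--Kaup biholomorphism $\Phi$. Since $\Phi$ sends $\lc$ into $\mathcal{T}$ and is locally biholomorphic on a dense open subset of $\mathbb{C}^3$, it restricts to a local CR diffeomorphism between $\lc$ and $\mathcal{T}$ on a dense open subset. Consequently, post-composition with $\Phi$ (and $\Phi^{-1}$) is compatible with the notion of equivalence of CR map germs, allowing us to transport the classification statement.

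The key steps I would carry out are as follows. First, given a smooth CR map $H \colon U \to \mathcal{T}$, I would show that for each $q \in U$ there is a neighbourhood $U_q \subset U$ of $q$ and a smooth CR map $\widetilde{H} \colon U_q \to \lc$ with $\Phi \circ \widetilde{H} = H$; here one argues that the image $H(U_q)$ generically lies in the image of $\Phi$ (the exceptional set being a proper real-analytic subvariety of $\mathcal{T}$), and one uses the Fels--Kaup construction to recover $\widetilde{H}$ as a CR map. Second, I would apply \cref{thm:main1} to the pullback $\widetilde{H}$. In the CR transversal case, $\widetilde{H}$ is equivalent near each point to one of the four germs $\ell, r_1, r_{-1}, \iota$; post-composing with $\Phi$ then produces a germ equivalent to $H$, and it remains to verify by direct calculation that $\Phi \circ \ell = \mathcal{R}_0$, $\Phi \circ r_{\pm 1} = \mathcal{R}_{\pm 1}$, and $\Phi \circ \iota = \mathcal{I}$, so that the four tube representatives are realized. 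In the nowhere CR transversal case, the pullback is equivalent to $t_q(z,w) = (0, \phi_q, 0)$; a short computation gives
\[
\Phi(0, \phi_q, 0) = \frac{1}{1+\phi_q}\begin{pmatrix} 0 & 0 \\ 0 & 1 - \phi_q \end{pmatrix},
\]
so $\varphi := (1-\phi_q)/(1+\phi_q)$ is a smooth CR function and the germ has the asserted matrix form.

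The verification $\Phi \circ r_1 = \mathcal{R}_1$ is the most representative calculation. Using $\zeta = 2z^2/(1-w^2)$ one gets $1+\zeta = (1-w^2+2z^2)/(1-w^2)$, and the numerator of the $(1,1)$-entry simplifies via
\[
z^2(1+iw)^2 - 2iz^2 w - iw(1-w^2) = (z^2-iw)(1-w^2),
\]
after which cancellation with $(1-w^2)$ produces exactly the matrix $\mathcal{R}_1$. The cases $r_{-1}$ and $\iota$ are analogous, with $\iota$ requiring slightly more care in tracking the branch of $\sqrt{1-4w^2-4iz^2}$.

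The main (mild) obstacle is ensuring that the pullback step goes through even when $H(U)$ might wander into the locus where $\Phi$ fails to be locally biholomorphic; this is settled by observing that this locus is a proper real-analytic subvariety of $\mathcal{T}$ and that CR transversality of $H$ forces the image to meet the regular locus, so pullback succeeds on a dense open subset of $U$ and extends by continuity to all of $U$. Once this is handled, the remainder of the proof reduces to the algebraic verifications indicated above, and inequivalence of the four tube representatives follows from inequivalence of the four $\lc$-representatives in \cref{thm:main1} transported through $\Phi$.
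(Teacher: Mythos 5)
Your overall route---transferring \cref{thm:main1} to the tube model by composing with the Fels--Kaup map $\Phi$---is precisely what the paper intends (the tube-model theorem is stated as a reformulation of \cref{thm:main1} and given no separate proof), and your algebraic verifications are correct: with the off-diagonal entry of $\Phi$ read as $\sqrt{2}z$ rather than the printed $\sqrt{2}w$ (the printed version does not even send $\lc$ into $\mathcal{T}$, as the point $(0,0,u)\in\lc$ shows, so this is evidently a misprint, and your computation implicitly uses the correct formula), one indeed gets $\Phi\circ\ell=\mathcal{R}_0$, $\Phi\circ r_{\pm1}=\mathcal{R}_{\pm1}$, $\Phi\circ\iota=\mathcal{I}$, and $\Phi(0,\phi_q,0)$ of the asserted diagonal form.

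The genuine gap is in your lifting step. You insist on factoring $H=\Phi\circ\widetilde{H}$ near every $q\in U$ and justify this by arguing that the exceptional locus of $\Phi$ is a proper real-analytic subvariety, that CR transversality forces the image to meet the regular locus, and that the lift then ``extends by continuity'' to all of $U$. This fails on two counts. First, it says nothing in case (b): there $H$ is nowhere CR transversal, and a priori its image could sit inside the set over which $\Phi$ is not invertible, so your stated resolution does not apply. Second, even in case (a), if $H(q)$ itself lies outside the open set on which $\Phi$ restricts to a CR diffeomorphism onto its image, a lift defined only on a dense open subset of $U_q$ need not extend continuously across the complement; the relation $\Phi\circ\widetilde{H}=H$ does not determine a limit where the fibres of $\Phi$ degenerate. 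The correct and much simpler argument is already supplied by the paper's setup: $\mathcal{T}$ is holomorphically homogeneous and, by Fels--Kaup, locally CR equivalent to $(\lc,0)$ at \emph{every} point. Concretely, for each $q$ choose $T_q\in\Aut(\mathcal{T})$ with $T_q(H(q))=\Phi(0)$ and set $\widetilde{H}:=\Phi^{-1}\circ T_q\circ H$ near $q$; this is a germ of smooth CR map $(\heis,q)\to(\lc,0)$, because $\Phi$ is locally biholomorphic at the origin. Since the notion of equivalence permits composing with germs of CR diffeomorphisms of the target, replacing $H$ by $T_q\circ H$ loses nothing, and the remainder of your argument (apply \cref{thm:main1}, push forward with $\Phi$, and transport the inequivalences) then goes through verbatim in both cases (a) and (b).
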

It is well-known that $\mathcal{T}$ is locally CR equivalent to the smooth boundary part of Cartan's classical domain of type IV. Recall from \cite[Section V]{cartan1935domaines} that the symmetric domain of type IV of $m$ dimension, denoted by $\tfourM$, is the domain of $m$-dimensional vectors 
\[
Z = (z_1,z_2,\dots, z_m ) \in \mathbb{C}^m
\] 
satisfying the conditions
\[
1- 2 Z \Zba^t + \left|Z Z^t\right|^2 > 0, \quad \left| ZZ ^t \right| < 1,
\]
where $Z^t$ is the $m\times 1$ transpose of $Z$. This domain is also called the \emph{Lie ball} or the \emph{complex sphere} in the literature. The complex 3-dimensional case (i.e. $m=3$) is rather special as $\tfour$ is also equivalent to the classical domain of type II, denoted by $D^{\mathrm{II}}_2$, consisting of  
symmetric $2\times 2$-matrices $Z$ such that $I_{2\times 2} - Z \bar{Z}$ is positive definite. Explicitly,
\[
	\vr_{\tfour}
	=
	\det \left(I_{2\times 2} - \begin{pmatrix}
		z_1+ i z_3 & i z_2\\
		iz_2 & z_1-i z_3
	\end{pmatrix} \cdot \begin{pmatrix}
	\zba_1- i \zba_3 & -i \zba_2\\
	-i\zba_2 & \zba_1+i \zba_3
\end{pmatrix} \right)
\]
while the trace of the matrix in the parenthesis is just $1- |Z Z^t|^2$.
The biholomorphisms of $\tfour$ and $D^{\mathrm{II}}_2$ are well-known.
\subsection{Automorphisms of the tube over the future light cone}
\label{sec:autom}
The local CR automorphisms of the tube over the future light cone are well-studied. They are the restrictions of birational but not necessarily biholomorphic transformations of $\mathbb{C}^3$; a local CR equivalence of $\mathcal{T}$ does not necessarily extend to a global CR equivalence \cite{kaup2006local}. By homogeneity, the local CR equivalences of $\mathcal{T}$ are determined once the automorphism group $\Aut(\mathcal{T},p)$ of the germs at an arbitrary point $p$ is determined. In fact, $\Aut(\mathcal{T},p)$ is a solvable real Lie group of dimension $5$ (isomorphic to the stability of a point on a sphere in $\mathbb{C}^2$ \cite{kaup2006local}). For our purpose, we shall describe this group explicitly in the local rational model of Fels--Kaup \cite{fels2007cr}. The result of Fels--Kaup is presented in \cite{kolar2019complete} by writing the Lie algebra $\mathfrak{g}$ as
\[
	\mathfrak{g} = \mathfrak{g}_{-2} \oplus \mathfrak{g}_{-1} \oplus \mathfrak{g}_0 \oplus \mathfrak{g}_1 \oplus \mathfrak{g}_2.
\]

The stability group $G_0^c$ generated by the component 
\[
	\mathfrak{g}_0^c: = \mathrm{span} \left\{ z\frac{\partial }{\partial z} + 2 w \frac{\partial }{\partial w}, \ iz \frac{\partial }{\partial z} + 2i \zeta \frac{\partial }{\partial \zeta} \right\}
\]
is parametrized by $\lambda>0$ and $\varphi \in \RR$ as follows
\[
(z,\zeta,w) \mapsto \left(\lambda e^{i\varphi} z, e^{2i\varphi} \zeta, \lambda^2 w\right).
\]
The component $\mathfrak{g}_2$ is 
\[
\mathfrak{g}_2 = \mathrm{span}\left\{zw \frac{\partial}{\partial z} - i z^2 \frac{\partial}{\partial \zeta} + w^2 \frac{\partial}{\partial w} \right\}.
\]
Integrating yields 1-parameter subgroup, we get
\[
(z,\zeta,w) \mapsto \bigl(z, \zeta - s w\zeta - i s z^2, w \bigr)/(1 - sw), \quad s \in \RR.
\]
The component $\mathfrak{g}_1$ is given by
\begin{multline*}
	\mathfrak{g}_1
	=
	\mathrm{span} \biggl\{
	(z^2+iw(\zeta +1)) \frac{\partial}{\partial z} + 2 z(\zeta+1) \frac{\partial}{\partial \zeta} + 2zw \frac{\partial }{\partial w}, \\ (iz^2-w(\zeta-1)) \frac{\partial}{\partial z} + 2i z(\zeta - 1) \frac{\partial}{\partial \zeta} + 2izw \frac{\partial }{\partial w}
	\biggr\}.
\end{multline*}
Integrating the first vector field, we obtain
\[
(z,\zeta,w) \mapsto \bigl(z(1- tz) + i t (1+ \zeta)w,-\delta_1+\zeta+1,w \bigr)/\delta_1, \ \delta_1 = (1-t z)^2-i (\zeta +1) t^2 w.
\]
Integrating the second vector field, we have
\[
(z,\zeta,w) \mapsto \bigl(z(1- irz) - r (\zeta-1)w,\delta_2+\zeta-1,w \bigr)\bigl/\delta_2, \quad  \delta_2 = (1-i r z)^2+i(\zeta - 1)r^2 w.
\]
Composing the automorphisms obtained above, we obtain
\begin{lem}[Stability group]\label{lem:stab} Let $a\in \CC, t \in \RR, u\in \mathbb{C}, |u|=1, \lambda >0$ and
\[
\delta = 1-2 i \bar a z - (t + i |a|^2) w  + i\bar a^2 (w\zeta + iz^2).
\]
Then the stability group $\Aut(\lc,0)$ is the 5 dimensional group consisting of the automorphisms 
\begin{align}\label{eq:stab}
\psi_{r,t,u,\lambda}\colon 
\begin{cases}
	\tilde{z} =  \lambda u \left(z+aw - \bar{a} (w\zeta + iz^2)\right) \delta^{-1} \\
	\tilde{w} = \lambda^2 w \delta^{-1} \\
	\tilde{\zeta} = u^2 \left(\zeta - 2i a z - i a^2 w - (t - i|a|^2) (w\zeta + iz^2)\right) \delta^{-1}
\end{cases}
\end{align}
\end{lem}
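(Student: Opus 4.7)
The plan is to obtain the formula \eqref{eq:stab} by composing the five one-parameter subgroups already written down just before the statement of the lemma. Since the stability algebra has been identified as $\mathfrak{g}_0^c \oplus \mathfrak{g}_1 \oplus \mathfrak{g}_2$ of real dimension $2+2+1=5$, and the five explicit generators given in the excerpt form a basis, the connected component of $\Aut(\lc,0)$ at the identity is swept out by any ordered product of the corresponding one-parameter flows. So the content of the lemma is really a calculation: write down an explicit product and simplify.

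First I would compose the two $\mathfrak{g}_1$ flows (parameters $t$ and $r$) to get a map parametrized by a single complex number $a := t + ir$ (modulo reordering corrections that fit inside the same two-parameter abelian slice, which can be absorbed into the $t$-parameter below). A direct substitution shows that the resulting two-parameter family can be written in the form
\[
(z,\zeta,w) \mapsto \bigl(z - \bar a(w\zeta + iz^2) + a w, \; \zeta - 2iaz - ia^2 w + i|a|^2(w\zeta+iz^2),\; w\bigr)\big/\delta_0,
\]
with some denominator $\delta_0$ depending only on $a,z,\zeta,w$. Next I would precompose with the $\mathfrak{g}_2$-flow of parameter $s$ (renamed $t$ in the final formula), which only modifies $\zeta$ by subtracting $s(w\zeta + iz^2)/(1-sw)$ and hence contributes the real term $-tw$ in the numerator of $\tilde\zeta$ and the term $-tw$ inside $\delta$. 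Finally I would postcompose with the $\mathfrak{g}_0^c$-flow, which is the diagonal scaling $(z,\zeta,w) \mapsto (\lambda u z, u^2 \zeta, \lambda^2 w)$ with $u = e^{i\varphi}$; this produces the overall $\lambda u$, $u^2$, $\lambda^2$ prefactors in \eqref{eq:stab} without touching the denominator.

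The key algebraic identity to verify along the way is that the denominators arising from the two $\mathfrak{g}_1$-flows and from the $\mathfrak{g}_2$-flow collapse, after simplification, into the single expression
\[
\delta = 1 - 2i\bar a z - (t + i|a|^2)w + i\bar a^2(w\zeta + iz^2).
\]
This is the main computational obstacle: one has to multiply out products of rational maps, cancel several conjugate-quadratic terms in $z,\bar a$, and recognize $w\zeta + iz^2$ as the natural ``defining'' combination suggested by the formula $v = (|z|^2 + \Re(z^2\bar\zeta))/(1-|\zeta|^2)$ for $\lc$. I would perform this step with a short computer algebra check, although it is elementary in principle.

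Finally I would close the argument in two short verifications. First, a direct substitution into the defining equation of $\lc$ confirms that each map \eqref{eq:stab} is indeed a CR automorphism of $\lc$ fixing the origin; equivalently, the identity $\Im \tilde w \cdot (1-|\tilde\zeta|^2) = |\tilde z|^2 + \Re(\tilde z^2 \bar{\tilde\zeta})$ holds formally in $(z,\zeta,w,\bar z,\bar\zeta,\bar w)$ modulo the relation defining $\lc$. Second, the parameters $(a,t,u,\lambda) \in \CC \times \RR \times S^1 \times \RR_{>0}$ sweep out a smooth 5-real-dimensional family, matching the known dimension of $\Aut(\lc,0)$ recalled from Fels--Kaup; since the composed flows already realize an open neighborhood of the identity in this 5-dimensional group, and the parameter space is connected, the family \eqref{eq:stab} exhausts the connected component of the identity, hence all of $\Aut(\lc,0)$.
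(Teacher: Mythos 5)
Your proposal matches the paper's own treatment: the paper obtains Lemma~\ref{lem:stab} precisely by composing the integrated one-parameter subgroups of $\mathfrak{g}_0^c \oplus \mathfrak{g}_1 \oplus \mathfrak{g}_2$ listed just before the statement and by invoking the Fels--Kaup description of the $5$-dimensional stability group, exactly as you do. The only difference is bookkeeping---the paper does not even display the simplification of the denominators into $\delta$ or the final identification step---so your plan is, if anything, more detailed than the published argument.
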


Infinitesimal automorphisms, which do not vanish at the origin, give rise to automorphisms moving points. We provide them in the following:

The vector field $\partial / \partial w$ spanning $\mathfrak g_{-2}$ integrates to the 1-parameter family
\begin{equation}\label{eq:trans1}
	 (z,\zeta,w)\mapsto (z,\zeta,w+s)\quad \text{for} \quad s \in \RR.
\end{equation}
The component $\mathfrak g_{-1}$ is given by
\[
\mathfrak{g}_{-1} = \mathrm{span} \left\{(1-\zeta) \frac{\partial}{\partial z} + 2 i z \frac{\partial}{\partial w}, i (1+\zeta) \frac{\partial}{\partial z} + 2 z \frac{\partial}{\partial w} \right\},
\]
whose vector fields integrate to 
\begin{align*}
	(z,\zeta,w) & \mapsto (z + t'(1-\zeta),\zeta,w + 2 i t' z + i t'^2 (1-\zeta)), \qquad t' \in \RR,\\
	(z,\zeta,w) & \mapsto (z + i s'(1+\zeta), \zeta, w + 2 s' z + i s'^2(1+\zeta)), \qquad s' \in \RR.
\end{align*}
Combining the three above automorphisms gives the following 3-parameter family which generates the Lie group corresponding to $\mathfrak{g}_{-1} \oplus \mathfrak{g}_{-2}$, 
\begin{align}\label{eq:transAuto1}
	\mathbf{\tau}_{b,r} \colon	(z,\zeta,w) \mapsto (z + b - \bar{b}  \zeta,\ \zeta,\ w + r + i |b|^2 + 2 i \bar b z + i\bar b^2/2-i b^2/2 - i \bar b^2 \zeta),
\end{align}
for $b \in \CC$ and $r \in \RR$, which maps $0$ to $(b,0,r+ i |b|^2 +i\bar b^2/2-ib^2/2)$.

The component $\mathfrak{g}_0^s$ is given by
\begin{align*}
	\mathfrak{g}_0^s = \mathrm{span} \left\{-z\zeta \frac{\partial}{\partial z} + (1-\zeta^2)\frac{\partial}{\partial \zeta} + i z^2 \frac{\partial}{\partial w},\ iz\zeta \frac{\partial}{\partial z} + i(1+\zeta^2)\frac{\partial}{\partial \zeta} + z^2 \frac{\partial}{\partial w} \right\}.
\end{align*}
If we write
\begin{align*}
	\varepsilon_1 = 1+ \lambda' + (\lambda'-1)\zeta, \qquad \lambda'> 0,
\end{align*}
then the first vector field integrates to
\begin{align}\label{eq:transAuto2}
	(z,\zeta,w) & \mapsto \left(2 \sqrt{\lambda'} z, \varepsilon'_1+2(\zeta-1), (\lambda'+1)w+(\lambda'-1)(w\zeta + i z^2) \right)/\varepsilon_1.
\end{align}
If we write
\begin{align*}
	\varepsilon_2 = 1+ \mu' -i (\mu'-1)\zeta, \qquad \mu'> 0,
\end{align*}
then the second vector field integrates to
\begin{align*}
	(z,\zeta,w) & \mapsto \left(2 \sqrt{\mu'} z, i(\mu'-1-i(\mu'+1)\zeta), (\mu'+1)w-i(\mu'-1)( w\zeta+iz^2)\right)/\varepsilon_2.
\end{align*}

If we compose the two 1-parameter families of automorphisms and write,
\begin{align*}
	c' = 1 + i(\mu'+\lambda') + \mu' \lambda', \qquad d' = 1+i(\mu'-\lambda') -\mu'\lambda', \qquad \gamma'= c' - d' \zeta,
\end{align*}
we obtain the following 2-parameter family of automorphisms
\begin{align*}
	\psi\colon (z,\zeta,w) \mapsto \left(2 \sqrt{\mu'\lambda'}(1+i) z, i(\bar c'\zeta-\bar d'), c' w - d' (w \zeta + i z^2) \right)/\gamma',
\end{align*}
which sends $0$ to $(0,- i \bar d'/c',0)$.

As an application of these explicit representations of the transitive automorphisms of $\lc$, we explicitly show in \cref{ex:linmap} below, that the rational map $R: \mathbb{B}^2 \rightarrow \tfour$ obtained in \cite[Theorem 1.4]{xiao2020holomorphic} corresponds to the linear map $\ell$, listed in \cref{thm:main1}, as a map from $\heis$ into $\lc$.

To deduce Corollary~\ref{cor:balld4} from Theorem \ref{thm:main1}, we shall use the local equivalence of the rational model $\lc$ and the smooth boundary part $\mr$ of $\tfour$ together with an well-known fact that (in contrast with the local CR equivalences of the rational model) each local CR equivalence of $\mr$ extends to a birational transformation of $\mathbb{C}^3$ that restricts to an automorphisms of $\tfour$ \cite[Section~6]{fels2007cr}, \cite{kaup2006local}. In the case of the sphere and Heisenberg hypersurface, this extension phenomenon has been known since the works of Poincar\'e, Tanaka, and Alexander. For general classical domains of rank $\geqslant2$, the same result was established by Mok--Ng \cite{mok2012germs}.
\begin{thm}\label{thm:alex} Let $p \in \mr$ be a point in the smooth boundary part $\mr$ of $\tfour$ and let $U$ be a neighbourhood of $p$ in $\mr$. If $H\colon U \to \mr$ is a local CR diffeomorphism, then there exists a rational map $\widetilde{H}$ such that $\widetilde{H}|_{\tfour}$ is an isometry 
of $\tfour$ with respect to the Bergman metric and $H = \widetilde{H}|_{U}$.	
\end{thm}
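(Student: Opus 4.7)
The plan is to combine the homogeneity of $\mr$ under $\Aut(\tfour)$ with the explicit description of the local automorphism pseudogroup from Section~2.2, reducing the core extension step to the general result of Mok--Ng \cite{mok2012germs} for classical domains of rank $\geqslant 2$.

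First, I would use that $\Aut(\tfour)$ acts transitively on the smooth boundary orbit $\mr$. Composing $H$ on the left and right with suitable global automorphisms, we may assume $H$ is a germ at a fixed base point $p_0\in\mr$ with $H(p_0)=p_0$. It then suffices to prove that every such germ extends to a global automorphism of $\tfour$ fixing $p_0$. Via the Fels--Kaup birational map $\Phi$ (\cite[Proposition~4.16]{fels2007cr}) and the standard Cayley-type transform identifying $\mathcal{T}$ with a neighbourhood of $p_0$ in $\mr$, the problem is transferred to a germ of CR diffeomorphism of the rational model $\lc$ fixing $0$, whose local automorphism group is the explicit $5$-dimensional Lie group $\Aut(\lc,0)$ described in \cref{lem:stab}.

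Second, a dimension count justifies why no local symmetries are lost upon passing to the global action. We have $\dim_{\RR}\Aut(\tfour)=15$ and $\dim_{\RR}\mr=5$, so the global isotropy at $p_0$ is a real Lie group of dimension $10$. On the other hand, the local CR automorphism Lie algebra at $p_0$ is exactly the $10$-dimensional algebra $\mathfrak g=\mathfrak g_{-2}\oplus\mathfrak g_{-1}\oplus\mathfrak g_0\oplus\mathfrak g_1\oplus\mathfrak g_2$ from Section~2.2, realized concretely by \cref{eq:stab}, \cref{eq:trans1}, \cref{eq:transAuto1}, and \cref{eq:transAuto2}. The two groups therefore have matching dimension.

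Third, to upgrade this matching of dimensions to the actual extension, I would invoke the Mok--Ng theorem \cite{mok2012germs}, which applies directly since $\tfour$ is an irreducible bounded symmetric domain of rank~$2$; it asserts that every germ of a CR diffeomorphism of the smooth boundary orbit extends to an element of $\Aut(\tfour)$. Since automorphisms of $\tfour$ are rational and are isometries of the Bergman metric, the resulting $\widetilde H$ has exactly the asserted properties. The main conceptual obstacle, and the reason this is not automatic from \cref{lem:stab} alone, is that birational local automorphisms of the rational model $\lc$ need not extend biholomorphically to all of $\CC^3$; the content of Mok--Ng is that after conjugating through $\Phi$ and the Cayley transform, the poles cancel against the defining equations of $\tfour$ and one lands in the global automorphism group. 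A self-contained proof bypassing Mok--Ng would require integrating each of the $1$-parameter subgroups of Section~2.2 through $\Phi$ and verifying by direct matrix computation in the type~II realization $D^{\mathrm{II}}_2$ that the resulting rational maps preserve $\tfour$; this is routine but lengthy, and I would defer it to the cited literature.
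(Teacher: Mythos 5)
Your proposal does reach the statement, but by a genuinely different route than the paper: you delegate the essential extension step to Mok--Ng, whereas the paper deliberately gives a self-contained elementary argument. After the same first reduction (transitivity of $\Aut(\tfour)$ on $\mr$, so that one may assume $H(p)=p$), the paper conjugates by the explicit birational map $\Phi$ of \eqref{eq:phi} to obtain $\gamma=\Phi^{-1}\circ H\circ\Phi\in\Aut(\lc,0)$, which by \cref{lem:stab} is given by the five-parameter formula \eqref{eq:stab}; it then computes $\widetilde H=\Phi\circ\gamma\circ\Phi^{-1}$ in closed form as a degree-two rational map \eqref{eq:t4local} with common denominator $\Delta$, and proves that $\Delta$ has no zeros on $\tfour$ by exhibiting a point $W\in\tfour$ for which $\Delta$ is a constant multiple of $1-2\overline{W}^tZ+\overline{(W^tW)}\,(Z^tZ)$, i.e.\ of the reciprocal of the Bergman kernel $K_{\tfour}(Z,W)$, and invoking the Lu Qi-Keng property of the homogeneous complete circular domain $\tfour$. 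Your appeal to Mok--Ng is legitimate in itself---the paper credits \cite{mok2012germs} with the same result for classical domains of rank $\geqslant 2$---but it outsources exactly the content the paper proves ``for the sake of completeness,'' namely that the rational extension has no poles on $\tfour$. Note also that once Mok--Ng is invoked as you state it, your first two steps (moving to a fixed base point and passing to the rational model) are superfluous, and your gloss on what Mok--Ng's proof does (``poles cancelling'' after conjugation through $\Phi$ and a Cayley transform) does not describe their method; it describes the verification you would need in the self-contained variant you defer.

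One concrete error to fix: the dimension count in your second step. The automorphism group of $\tfour$ is (a quotient of) $SO(3,2)$, so $\dim_{\RR}\Aut(\tfour)=10$, not $15$; since $\dim_{\RR}\mr=5$, the isotropy of the global action at $p_0$ has dimension $5$, not $10$. Correspondingly, the $10$-dimensional algebra $\mathfrak{g}=\mathfrak{g}_{-2}\oplus\mathfrak{g}_{-1}\oplus\mathfrak{g}_{0}\oplus\mathfrak{g}_{1}\oplus\mathfrak{g}_{2}$ is the full local automorphism algebra, including the vector fields that move the base point; the isotropy subalgebra at the origin is $\mathfrak{g}_0^c\oplus\mathfrak{g}_1\oplus\mathfrak{g}_2$, of dimension $5$, consistent with the $5$-dimensional stability group of \cref{lem:stab}. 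The dimensions do match, but at the value $5$ on both sides; as written, your check compares the wrong groups. Since this count only serves as a plausibility argument and the extension itself rests on the citation, the slip is not fatal, but it should be corrected.
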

We should also note a well-known theorem of Tumanov--Khenkin \cite{tumanov1982local} which gives a similar conclusion when the smooth boundary part is replaced by the Shilov boundary, the ``skeleton'' in the terminology of \cite{tumanov1982local}, of the domain. 
For the sake of completeness, we give an elementary proof that is based on Lemma~\ref{lem:stab} for the special case of $\tfour$.
\begin{proof}
Since the automorphisms of $\tfour$ act transitively on $\mr$, we may assume that $H(p) = p = (0,\frac{1}{2},\frac{i}{2})$. The rational map $\Phi$ 
\begin{equation}\label{eq:phi}
	\Phi(z,\zeta,w)
	=
	\left(\frac{2i z}{2i + w}, \ \frac{2i  - w - 2i \zeta -(w\zeta + iz^2)}{2(2i+w)},\ \frac{i\left(2i - w + 2i \zeta +(w\zeta + iz^2) \right)}{2(2i+w)}\right)
\end{equation}
sends a neighbourhood $V$ of the origin biholomorphically onto some neighbourhood $U$ of $p$ with $\Phi(0,0,0) = p$ and locally $\lc$ into $\mr$. Then $\gamma:=\Phi^{-1} \circ H\circ \Phi$ defines a local CR diffeomorphism of $\lc$ fixing the origin, i.e., $\gamma$ represents an element of the stability group $\Aut(\lc,0)$ and has the form \eqref{eq:stab} for an appropriate quadruple of parameters $a, u, t$, and $\lambda$. Therefore, $H = \Phi \circ \gamma \circ \Phi^{-1}$ agrees with a rational automorphism (in fact, an isometry) of $\tfour$ as can be checked directly. Indeed, explicitly one can take
\begin{align}\label{eq:phiinv}
	\Phi^{-1}(z_1,z_2,z_3)
	=
	\left(\frac{2z_1}{1+z_2 -i z_3},\ -\frac{z_2 + i z_3 + z_1^2+z_2^2+z_3^2}{1+z_2 -i z_3}, \ \frac{2i(1-z_2 + i z_3)}{1+z_2 -i z_3}\right)
\end{align}
Then $\Phi \circ \gamma \circ \Phi^{-1}$ is a second degree rational mapping of the form
\begin{equation}\label{eq:t4local}
\widetilde{H}=\Phi \circ \gamma \circ \Phi^{-1}
=
\left(\frac{\psi_1}{\Delta},\ \frac{\psi_2}{2\Delta},\ \frac{\psi_3}{2\Delta}\right)
\end{equation}
where the components of the map are
\begin{align*}
	\psi_1 & = 2u\lambda (ia + z_1 + 2z_2 \Im(a) -z_3 \Re(a) -i \bar{a} (z_1^2 + z_2^2+z_3^2) ), \\
	\psi_2 & = 1+2|a|^2 -\lambda^2 -2it -2a^2 u^2 + 4iz_1 \left(a u^2-\bar{a}\right)\\ 
	& \quad + z_2\left( u^2 \left(2 a^2-2 | a|^2 +\lambda ^2-2 i t+1\right)+
	1-2| a|^2 +\lambda ^2+2 \bar{a}^2+2 i t\right)\\
	& \quad -i z_3\left(u^2 \left(2 a^2+2| a|^2-\lambda ^2+2 i t-1\right)+1-2| a|^2+\lambda ^2-2 \bar{a}^2+2 i t\right)\\
	& \quad + \left(u^2 \left(1+2 |a|^2-\lambda ^2+2 i t\right)-2 \bar{a}^2\right)(z_1^2+z_2^2+z_3^2),\\
	\psi_3 & = i\bigl(1+2 |a|^2-\lambda ^2-2 i t + 2 a^2 u^2 - 4i z_1 (au^2 + \bar{a}) \\
	& \quad 		+ z_2(1-2 |a|^2+\lambda ^2+2 \bar{a}^2+2 i t - u^2(1-2|a|^2 +\lambda^2 + 2a^2 -2i t)) \\
	& \quad -i z_3\left(u^2 \left(1-2 |a|^2+\lambda ^2-2 a^2-2 i t\right)+1-2|a|^2+\lambda ^2-2 \bar{a}^2+2 i t\right)\\
	& \quad -\left(u^2 \left(1+2 |a|^2-\lambda ^2+2 i t+1\right)+2 \bar{a}^2\right)(z_1^2+z_2^2+z_3^2)\bigr),
\end{align*}
and the common denominator is
\begin{align*}
	\Delta 
	=
	(1+2 |a|^2+\lambda ^2-2 i t)-2\bar{a}^2(z_1^2+z_2^2+z_3^2) - 4i \bar{a}z_1\\
	+ (1-2 |a|^2- \lambda ^2 + 2 \bar{a}^2 + 2 i t)z_2
	-i(1-2 |a|^2 -\lambda ^2 -2 \bar{a}^2 + 2 i t) z_3.
\end{align*}
Equation \eqref{eq:t4local} gives the most general parametric formula for the CR automorphisms of the germ $(\mr,p)$.

To show that the rational map $\widetilde{H}$ restricts to an automorphism of $\tfour$, it suffices to show that $\Delta$ does not vanish on $\tfour$. 
If we write
\[
	W = \left(\frac{-2ia}{1+2|a|^2+\lambda^2+2it},\ \frac{\lambda^2-1+2|a|^2-2a^2+2it}{2(1+2|a|^2+\lambda^2+2it)},\ \frac{i(\lambda^2-1+2|a|^2+2a^2 +2it)}{2(1+2|a|^2+\lambda^2+2it)}\right)
\]
then $W\in \tfour$ and $\widetilde{H}(W) =(0,0,0)$ and further more
\[
	\Delta
	=
	(1+2 |a|^2+\lambda ^2-2 i t)\left(1 - 2 \overline{W}^t Z + \overline{(W^t W)}\,(Z^t Z)\right).
\]
But from the explicit formula for the Bergman kernel $K_{\tfour}(Z,W)$ of $\tfour$ (see \cite{hua1963harmonic}), we have
\[
K_{\tfour}(Z,W) = \mathrm{volume}(\tfour)^{-1} \left(\frac{\Delta}{1+2 |a|^2+\lambda ^2-2 i t}\right)^{-3}
\] 
and the nonvanishing of $\Delta$ on $\tfour$ is equivalent to the well-known fact that $\tfour$, being a homogeneous complete circular domain, has nonvanishing Bergman kernel (i.e, is a Lu-Qi Keng domain).
\end{proof}

\subsection{An invariant for CR maps into the tube over the future light cone}\label{sec:ahlfors}
In this section, we analyze a tensor attached to each transversal CR map into the tube over the future light cone or into $\mr$ that is similar to the CR Ahlfors derivative of CR immersions studied recently by Lamel and the second author. We refer the reader to \cite{lamel2019cr} for some details about the origin, motivation, and a general differential geometric construction of the CR Ahlfors tensor for CR immersions in the strictly pseudoconvex setting. In our current setting, the construction is rather ad-hoc, yet the resulting tensor is still useful for our purpose. Namely, it has an invariant property and gives an easy way to distinguish inequivalent maps into the tube over the future light cone. Indeed, we shall use it in the proof of Corollary~\ref{cor:balld4}.

Let $M\subset \mathbb{C}^{n+1}$ be a real hypersurface defined by $\vr_M =0$ and $H\colon M \to \mr$ a CR transversal CR map extending holomorphically to a neighbourhood of $M$. By the CR transversality,
there is a real-valued smooth $v$ such that 
\[
	\vr_{\tfourM} \circ H = \pm e^{v} \vr_M, \quad 
	\vr_{\tfourM}: = 1 - 2 Z \overline{Z}^t + |Z Z^t|^2.
\]
Without lost of generality, we assume that the positive sign occurs. If $M$ is a sphere we take $\vr_M := 1 - |z|^2$ and fix the defining function of the target $\mr$ to be $\vr_{\tfourM} $. Then we define the \emph{Ahlfors-type tensor} $\mathcal{A}(H)$ by
\begin{equation}\label{def:ahlfors}
	\mathcal{A}(H) = v_{Z\overline{Z}}\bigl|_{T^{(1,0)}M \times T^{(0,1)}M},
\end{equation}
where $v_{Z\overline{Z}}$ is the $(1,1)$-Hessian of $v$. For any $\psi \in \Aut(\tfourM)$, we have
\[
	\vr_{\tfourM} \circ \psi = |q|^2 \vr_{\tfourM}
\]
for some rational function $q$ having no pole and zero on $\mr$. Thus, arguing as in \cite{lamel2019cr}, we see that $\mathcal{A}(H)$ is invariant with respect to composing with automorphisms of $\tfourM$, i.e., on the smooth boundary part $\mr$,
\[
	\mathcal{A}(H) = \mathcal{A}(\psi\circ H),\quad \forall \ \psi \in \Aut(\tfourM).
\]
Similarly, on the sphere
\[
	\mathcal{A}(H \circ \gamma^{-1} ) = |\tilde{q}|^2\mathcal{A}(H)
\]
holds for every automorphism $ \gamma$ of the ball with a nonvanishing CR function $\tilde{q}$. These invariant properties do not hold for general defining functions of the source and target.

Let $\{Z_{\alpha}\}$ be a local frame for the $T^{(1,0)}M$, then we say that $H$ has \emph{Ahlfors rank} $k$ at $p$ if the Hermitian form in \eqref{def:ahlfors} has rank $k$. The Ahlfors rank is invariant with respect to composition with the source and target automorphisms. Precisely, if $\mathrm{Rk}_{\mathcal{A}}(H)$ the Ahlfors rank of $H$ at $p$ (with respect to the defining functions of the source and target as above), then 
\[
	\mathrm{Rk}_{\mathcal{A}}(H)|_p = \mathrm{Rk}_{\mathcal{A}}(\psi\circ H \circ \gamma^{-1})\bigl|_{\gamma(p)}.
\]

If $M$ is the Heisenberg hypersurface and  $H\colon \heis \to \lc$ is a smooth CR map, then we can define the Ahlfors tensor of $H$  which has the desired invariant property by using the defining function $\vr_{\heis} = \Im w - |z|^2$ for the source and the defining function
\[
	\widetilde{\vr}_{\lc}(z,\zeta,w)
	=
	\frac{(1-|\zeta|^2) \Im w - |z|^2- \Re (\zeta  \zba^2)}{|w+2 i|^2}
\]
the target. In fact, with this choice of the defining function for $\lc$, we have from the explicit formulas for the local CR automorphisms in Section 2, that
\[
	\widetilde{\vr}_{\lc} \circ \psi = |q|^2 \widetilde{\vr}_{\lc}
\]
for some holomorphic (in fact, rational) function $q$. The corresponding Ahlfors tensor is invariant with respect to the action of the CR autormophisms groups of both source and target.

\section{A partial normalization and the notion of geometric rank}
\label{sec:normalform}

Let $H \colon \heis \to \lc$ be a smooth CR map from the Heisenberg hypersurface into $\lc$. We use the automorphisms of the stability groups and normalize $H$ so that its 2-jet has a specific form.

\begin{rem}
Throughout this and the preceding section we use the following notation: Let $h$ be a germ at $(0,0)$ of a holomorphic function in $\CC^2$, depending on variables $(z,w)$ in $\CC^2$. Instead of writing $\frac{\partial^{k+\ell} h}{\partial z^k\partial w^\ell}$ for $k,\ell \in \NN$, we write $h_{z \cdots z w \cdots w}$ and its evaluation at $(0,0)$ we denote by $h^{(k,\ell)}$. 
\end{rem}

\begin{thm}
	\label{thm:normalform}
	Let $p \in \heis$ and $H=(f,\phi,g)$ be a germ at $p$ of a smooth CR map, sending $\heis$ into $\lc$. Assume that $H$ is CR transversal at $p$. Then the germ $(H,p)$ is equivalent to the germ at the origin of a CR map $\widetilde H =(\widetilde f, \widetilde \phi, \widetilde g)$ satisfying the following properties:
	\begin{align}
		\nonumber
		\widetilde f(z,w) & = z + \frac{i}{2} \alpha z w + \nu w^2 + O(3),\\
		\label{eq:normalform}
		\widetilde \phi(z,w) & = \lambda w + \alpha z^2 + \mu z w + \sigma w^2 + O(3),\\
		\nonumber
		\widetilde g(z,w) & = w + O(3),
	\end{align}
	where $\alpha \in \RR$ and $\lambda,\nu,\mu, \sigma \in \CC$. Furthermore, we may assume $\lambda\in \{0,1\}$, and if $\lambda = \mu = 0$, we may assume $\alpha \in \{-1,0,1\}$.
\end{thm}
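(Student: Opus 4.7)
My plan is to proceed in three stages. First, using homogeneity of $\heis$ (Heisenberg translations) and of $\lc$ (whose automorphism group acts transitively via $\mathfrak{g}_{-2}\oplus\mathfrak{g}_{-1}\oplus\mathfrak{g}_0^s$ from Section~\ref{sec:autom}), I would compose $H$ on the source side with a translation moving $p$ to the origin, and on the target side with a suitable automorphism to arrange $H(0)=0$. Writing $H=(f,\phi,g)$ in Taylor series at $0$, the pullback to $\heis$ of the defining equation of $\lc$ reads
\[
(g-\bar g)(1-|\phi|^2) \;=\; 2if\bar f + i(f^2\bar\phi + \bar f^2\phi),
\]
valid after the substitution $\bar w = w-2iz\bar z$.

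Next, I would match coefficients at each weighted order, where $z,\bar z$ carry weight $1$ and $w,\bar w$ weight $2$. At weight $2$ one obtains $g^{(1,0)}=0=g^{(2,0)}$, $g^{(0,1)}\in\RR$, and $g^{(0,1)}=|f^{(1,0)}|^2$; CR transversality is equivalent to $g^{(0,1)}>0$, whence $f^{(1,0)}\neq 0$. At weight $3$ one reads off $g^{(1,1)}=2if^{(1,0)}\,\overline{f^{(0,1)}}$, $g^{(3,0)}=0$, and the key identity
\begin{equation*}
\overline{f^{(1,0)}}\,f^{(2,0)} + (f^{(1,0)})^2\,\overline{\phi^{(1,0)}} \;=\; 4if^{(1,0)}\,\overline{f^{(0,1)}}, \qquad(*)
\end{equation*}
linking $f^{(2,0)}$ and $\phi^{(1,0)}$. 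Weight $4$ will later supply the additional identities $\Im g^{(0,2)} = 2|f^{(0,1)}|^2$ and an expression for $\Re g^{(0,2)}$ in terms of $\Re f^{(1,1)}$, which will become automatic vanishings once the earlier normalizations are in place.

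In the third stage, I would apply the stability-group automorphism $\psi_{a,t,u,\lambda}$ from \cref{lem:stab}. A direct jet computation shows $\widetilde g^{(0,1)}=\lambda^2 g^{(0,1)}$, $\widetilde f^{(1,0)}=\lambda u f^{(1,0)}$, $\widetilde f^{(0,1)}=\lambda u(f^{(0,1)} + ag^{(0,1)})$, and that $t$ shifts only the imaginary part of $\widetilde\phi^{(2,0)}$. I would then choose, in order, $\lambda = 1/\sqrt{g^{(0,1)}}$, $u$ to make $\widetilde f^{(1,0)}$ real positive, $a=-f^{(0,1)}/g^{(0,1)}$, and $t$ so that $\alpha:=\widetilde\phi^{(2,0)}/2\in\RR$. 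With these, $(*)$ collapses to $\widetilde f^{(2,0)} = -\overline{\widetilde\phi^{(1,0)}}$, while the weight-$4$ relations combined with $\widetilde f^{(0,1)}=0$ and $\alpha\in\RR$ force $\widetilde g^{(1,1)}=0$ and $\widetilde g^{(0,2)}=0$. To additionally impose $\widetilde\phi^{(1,0)}=0$ (whence $\widetilde f^{(2,0)}=0$ by $(*)$), I would exploit a residual combined element of $\Aut(\heis,0)\times\Aut(\lc,0)$ that preserves the earlier normalizations: specifically, a source Heisenberg-type $\mathfrak{g}_1^{\heis}$-map composed with a compensating target $\mathfrak{g}_1^\lc$-map contributes two further real degrees of freedom acting on $\widetilde\phi^{(1,0)}$. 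Finally, the discrete reductions $\lambda=\widetilde\phi^{(0,1)}\in\{0,1\}$, and when $\lambda=\mu=0$ also $\alpha\in\{-1,0,1\}$, follow by composing with a source-target rotation $(z,w)\mapsto(e^{i\theta}z, w)$ paired with $\psi_{0,0,e^{i\theta},1}$ (which sends $\lambda\mapsto e^{2i\theta}\lambda$ and fixes $\alpha$), and a source-target dilation $(z,w)\mapsto(rz, r^2 w)$ paired with $\psi_{0,0,1,r}$ (which sends $\lambda\mapsto\lambda/r^2$, $\mu\mapsto\mu/r$ and $\alpha\mapsto\alpha/r^2$, preserving $\mu=0$).

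The main obstacle is the bookkeeping in the third stage: verifying that the residual source-target combination precisely kills $\widetilde\phi^{(1,0)}$ while preserving every normalization imposed earlier, and checking that each of the claimed automatic vanishings ($\widetilde g^{(1,1)}$, $\widetilde g^{(0,2)}$, etc.) indeed holds directly from the weight-$3$ and weight-$4$ mapping equations.
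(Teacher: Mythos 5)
Your first two stages are sound (the weighted-order identities you list, including the key relation $(*)$ at weight $3$, are correct and are exactly what the paper extracts from the mapping equation), but your third stage contains a genuine gap: the vanishing of $\widetilde g^{(0,2)}$ is \emph{not} automatic. Concretely, the map $(z,w)\mapsto\bigl(\tfrac{z}{1+rw},\,0,\,\tfrac{w}{1+rw}\bigr)$ with $r\in\RR\setminus\{0\}$, which is the linear map $\ell$ precomposed with an element of $\Aut(\heis,0)$, sends $\heis$ into $\lc$ and satisfies every normalization you impose ($f^{(1,0)}=1$, $f^{(0,1)}=0$, $\phi^{(1,0)}=0$, $g^{(0,1)}=1$, $\phi^{(2,0)}=0\in\RR$), yet it has $g^{(0,2)}=-2r\neq 0$ and $\Re f^{(1,1)}=-r\neq 0$, so it is not of the form \eqref{eq:normalform}. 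What the mapping equation gives for free after your choices is only $\Im g^{(0,2)}=2|f^{(0,1)}|^2=0$, $g^{(1,1)}=0$ and $f^{(2,0)}=-\overline{\phi^{(1,0)}}$; killing $\Re g^{(0,2)}$ (and with it, as a consequence, $\Re f^{(1,1)}$) genuinely consumes a group parameter that your bookkeeping never uses, namely the $\mathfrak g_2$-parameter $r$ of $\Aut(\heis,0)$, i.e.\ the automorphisms $(z,w)\mapsto(z,w)/(1-rw)$. This is precisely the step in the paper's proof where $g_3^{(0,2)}=-2r+(2t+g_2^{(0,2)})/\lambda'^2$ is set to zero by the choice of $r$, and where afterwards $t$ is used for $\Im\phi^{(2,0)}$ together with the compensating choice $r=t/\lambda'$ so that $g^{(0,2)}=0$ is not destroyed; your claim that $t$ ``shifts only the imaginary part of $\widetilde\phi^{(2,0)}$'' overlooks that $t$ also shifts $\Re\widetilde g^{(0,2)}$, which is exactly why this compensation is needed.

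A secondary, fixable, issue is the ordering of your last step: the combined source-$\mathfrak g_1$/target-$\mathfrak g_1$ element you apply at the end to remove $\widetilde\phi^{(1,0)}$ is asserted, not verified, to preserve the earlier normalizations. The target parameter $a$ of \cref{lem:stab} enters $\widetilde\phi^{(2,0)}$ (through the terms $-2iaf$, $-ia^2g$, $-(t-i|a|^2)(g\phi+if^2)$ and the denominator $\delta$) as well as $\widetilde f^{(1,1)}$ and $\widetilde g^{(0,2)}$, so applying it after your $t$-step will generically destroy $\Im\widetilde\phi^{(2,0)}=0$. The clean allocation, which is the one the paper uses, is to let the target parameter $a$ kill $\phi^{(1,0)}$ directly (it shifts $\widetilde\phi^{(1,0)}$ by $-2iaf^{(1,0)}$) and the source $\mathfrak g_1$-parameter kill $f^{(0,1)}$, perform these before the $r$- and $t$-steps, and only then read off the remaining identities from the mapping equation. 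Your final discrete reductions ($\lambda\in\{0,1\}$ by a joint rotation--dilation, and $\alpha\in\{-1,0,1\}$ when $\lambda=\mu=0$) are correct.
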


\begin{proof}
By the transitivity of the automorphisms of $\heis$ and $\lc$ we can assume $H(0)=0$.
It is easy to conclude directly from the mapping equation, that $g(z,0) = 0$ and $g^{(0,1)} = |f^{(1,0)}|^2$. By the transversality of $H$ we have $f^{(1,0)} \neq 0$ and $g^{(0,1)}>0$.
We write $H_{k+1} \coloneqq \varphi'_k \circ H_k \circ \varphi^{-1}_k$, where $\varphi_k^{-1}$ and $\varphi'_k$ belong to $\Aut(\heis,0)$ and $\Aut(\lc,0)$ respectively, $k \in \NN$ and $H_0 \coloneqq H$. We have
\begin{align*}
	H_{1 z}(0) = \left(u u' \lambda \lambda' f^{(1,0)}, u u'^2 \lambda (-2 i c' f^{(1,0)} + \phi^{(1,0)}),0\right).
\end{align*}
Since $f^{(1,0)} \neq 0$, by choosing
\begin{align*}
	u= \frac{\overline{f^{(1,0)}}}{u' |f^{(1,0)}|}, \quad \lambda = \frac{1}{\lambda' |f^{(1,0)}|}, \qquad c' = -\frac{i \phi^{(1,0)}}{2 f^{(1,0)}},
\end{align*}
we can assume that $f_1^{(1,0)}=1$ and $\phi^{(1,0)}_1=0$. This implies $g^{(0,1)}_1=1$.
Considering $H_2 = \varphi'_1 \circ H_1 \circ \varphi_1^{-1}$ with $u=1/u',\lambda = 1/\lambda'$ and $c'=0$, we compute
\begin{align*}
	f^{(0,1)}_2 = c + \frac{u' f^{(0,1)}_1}{\lambda'},
\end{align*}
and set 
\begin{align*}
	c = - \frac{u' f^{(0,1)}_1}{\lambda'},
\end{align*}
to obtain $f^{(0,1)}_2=0$. Differentiating the mapping equation for $H_2$ twice with respect to $\bar w$ gives $\Im g^{(0,2)}_2 =0$. Differentiating the mapping equation with respect to $z$ and $\bar w$ gives $g^{(1,1)}_2=0$ and with respect to $z$ twice and $\bar z$ yields $f^{(2,0)}_2=0$. 
If we take $H_3 = \varphi'_2 \circ H_2 \circ \varphi_2^{-1}$ with $u=1/u',\lambda = 1/\lambda'$ and $c=c'=0$, we obtain
\begin{align*}
	g^{(0,2)}_3 = -2 r + (2 t + g^{(0,2)}_2)/\lambda'^2, 
\end{align*}
which, after setting 
\begin{align*}
	r = \frac{2t + g^{(0,2)}_2}{2\lambda'^2},
\end{align*}
implies that $g^{(0,2)}_3 = 0$. If we plug $H_3$ into the  mapping equation and differentiate with respect to $z, \bar z$ and $\bar w$ we obtain that $\Re f^{(1,1)}_3 = 0$.
Moreover, if we differentiate the mapping equation twice with respect to $z$ and $\bar z$ we get $\Re \phi^{(2,0)}_3 = 2 \Im f^{(1,1)}_3$.
In the last step, we consider $H_4= \varphi'_3 \circ H_3 \circ \varphi_3^{-1}$ with $u=1/u',\lambda = 1/\lambda', c=c'=0$ and $r=t/\lambda'$ to obtain
\begin{align*}
	\phi^{(2,0)}_4 = \frac{-2 i t + \phi^{(2,0)}_3}{\lambda'^2}, 
\end{align*}
which, after setting 
\begin{align*}
	t = \frac{\Im \phi^{(2,0)}_3} 2,
\end{align*}
implies that $\Im \phi^{(2,0)}_4 = 0$. Finally, we compute
\begin{align*}
	\phi^{(0,1)}_4 = \frac{u'^2 \phi^{(0,1)}_3}{\lambda'^2}, 
	\qquad \phi^{(2,0)}_4 = \frac{4 \Im f^{(1,1)}_3}{\lambda'^2},  
\end{align*}
which implies the remaining normalization conditions, when setting $\alpha = \Re  \phi^{(2,0)}_4/2$ and choosing $\lambda'$ and $u'$ accordingly.
\end{proof}
In the partial normal form \eqref{eq:normalform} of a map germ at a center point $p$, the vanishing of the coefficient $\alpha$ is an invariant property of the map germ. Thus, in analogy with the case of sphere maps \cite{Huang99,Huang03}, we make the following definition.
\begin{defn}\label{def:gr}
Let $H\colon U\subset \heis \to \lc$ be a smooth transversal CR map and $p\in U$. We say that $H$ has \emph{geometric rank} zero at $p$ if $H$ can be brought into the partial (formal) normal form \eqref{eq:normalform} with $\alpha =0$. Otherwise we say that $H$ has \emph{geometric rank} $1$ at $p$.
\end{defn}
The geometric rank is precisely the rank of the Ahlfors tensor defined in Section~\ref{sec:ahlfors} but the latter can be computed easily (without the use of explicit formulas for the CR automorphisms). This relation was first noticed by Lamel and the second author in the case of sphere maps in \cite{lamel2019cr}. Thus the invariant property of the geometric rank also follows from that of the Ahlfors tensor. Indeed, if $H$ is given in \eqref{eq:normalform} and if
\[
	\widetilde{\vr}_{\lc} \circ H = \widetilde{Q}\cdot \vr_{\heis},
\]
then at the origin,
\[
	\widetilde{Q} = 2, \quad \widetilde{Q}_{z} = 0,\quad \widetilde{Q}_{z\zba} = 2\alpha.
\]
Hence, for the defining functions as above and for $L: = \partial_z - 2i \zba \partial_w$ a section of $T^{1,0} \heis$, we have
\[
	\mathcal{A}(H)(L,\Lba)\bigl|_0 = \left(\log \widetilde{Q}\right)_{Z\Zba}(L,\Lba)\biggl|_0
	=
	\widetilde{Q}^{-2} \widetilde{Q}_{z\zba}\bigl|_0 = \frac{\alpha}{2}.
\]
The relation also holds at an arbitrary point $p\in \heis$ by the invariant property of the left hand side.

\section{Proofs of Theorem~\ref{thm:main1} and its corollaries}
\label{sec:proofs}
 We denote by $\Sigma$ the first Segre set of $\heis$ at the origin. Precisely,
\[
	\Sigma = \{(z,0) \in \mathbb{C}^2\}.
\]
We can compute the map $H$ along $\Sigma$ as follows:
\begin{lem}\label{lem:first} Let $H = (f,\phi,g)$ be a map given in the normal form \cref{eq:normalform}. Let $P = P_H$ be the rational map depending on 2-jet of $H$ given by
	\[
	P(z;X)
	= P_{\alpha,\lambda,\sigma,\mu,\nu}(z;X) 
	=
	\left(X,\frac{2z(4 \bar{\nu} z+\alpha) X +\left(4 \bar{\sigma} z^2+2 i \bar{\mu} z -\alpha\right)X^2}{1 - 4i \bar{\lambda} z^2},0\right).
	\]
	Then
	\begin{equation}\label{eq:1jetSegre}
	H\bigl|_{\Sigma} = P\left(z;\frac{2z}{1+\sqrt{1-4i\lba z^2}}\right).
	\end{equation}
\end{lem}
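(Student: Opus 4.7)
The plan is to complexify the mapping equation for $H$, restrict to the first Segre variety $\Sigma = \{w = 0\}$, expand both sides as power series in the antiholomorphic variable $\chi$, and read off the restrictions $g_0 := g(z,0)$, $f_0 := f(z,0)$, $\phi_0 := \phi(z,0)$ from the coefficients of $\chi^0$, $\chi^1$ and $\chi^2$, respectively. The fact that only the first three $\chi$-coefficients of the conjugate components are needed reflects the $2$-nondegeneracy of $\lc$, so only the $2$-jet of $H$ at $0$ enters the answer, consistent with the formula for $P$.

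More concretely, the defining equation of $\lc$ translates into the mapping equation
\[
(g-\bar g)(1 - \phi \bar\phi) = 2 i f \bar f + i (f^2 \bar\phi + \bar f^2 \phi),
\]
whose complexification holds on $\{w - \tau = 2 i z \chi\}$; restriction to $\Sigma$ forces $\tau = -2 i z \chi$. Setting
\[
F(\chi) := \bar f(\chi,-2iz\chi), \qquad \Phi(\chi) := \bar \phi(\chi,-2iz\chi), \qquad G(\chi) := \bar g(\chi,-2iz\chi),
\]
a direct computation from the normal form \eqref{eq:normalform} gives
\begin{align*}
F(\chi) &= \chi - (\alpha z + 4\bar{\nu}\, z^2)\chi^2 + O(\chi^3),\\
\Phi(\chi) &= -2 i \bar{\lambda}\, z\, \chi + (\alpha - 2 i \bar{\mu}\, z - 4\bar{\sigma}\, z^2)\chi^2 + O(\chi^3),\\
G(\chi) &= -2 i z\, \chi + O(\chi^3).
\end{align*}
Expanding the mapping equation in $\chi$ and matching coefficients yields: the $\chi^0$-term forces $g_0 \equiv 0$; the $\chi^1$-term reduces to the quadratic $\bar{\lambda}\, z\, f_0^2 + i f_0 - i z = 0$, whose branch with $f_0(0) = 0$ and $f_0 = z + O(z^2)$ is
\[
f_0 \;=\; \frac{2 z}{1+\sqrt{1-4 i \bar{\lambda}\, z^2}},
\]
matching the first component of $P(z;X)$ at $X = f_0$; the $\chi^2$-term is linear in $\phi_0$, with coefficient $-i - 4 \bar{\lambda}\, z^2 = -i\,(1 - 4 i \bar{\lambda}\, z^2)$. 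Collecting the remaining terms and dividing by $-i$ gives
\[
\phi_0\,(1 - 4 i \bar{\lambda}\, z^2) \;=\; 2 z\,(\alpha + 4 \bar{\nu}\, z)\, f_0 + (4 \bar{\sigma}\, z^2 + 2 i \bar{\mu}\, z - \alpha)\, f_0^2,
\]
which is exactly the second component of $P(z;f_0)$.

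The main, mostly computational, obstacle is careful bookkeeping: handling the convention $\bar f(\chi,\tau) = \sum \overline{a_{jk}}\, \chi^j \tau^k$ for the conjugate series, carrying through the substitution $\tau = -2 i z \chi$, and recognizing that the denominator $1 - 4 i \bar{\lambda}\, z^2$ appearing in $P$ emerges naturally by factoring $-i$ from the coefficient $-i - 4 \bar{\lambda}\, z^2$ of $\phi_0$ in the $\chi^2$ comparison. As a consistency check, the same factor also equals $(1-2 i z \bar{\lambda}\, f_0)^2$, which is immediate by rationalizing the explicit formula for $f_0$; this identity also makes transparent why $f_0$ is the unique branch satisfying the $\chi^1$-equation.
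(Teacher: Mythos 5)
Your argument is correct, and it derives the restrictions $g|_{\Sigma}$, $f|_{\Sigma}$, $\phi|_{\Sigma}$ by a route that is organized differently from the paper's. The paper keeps the multiplier form $\widetilde{\vr}\circ H = Q\,\vr$ with unknown $Q$, differentiates in $\bar z,\bar w$ at $\bar z=\bar w=0$ (keeping $w$ free), and is thereby led to introduce the auxiliary functions $r=Q_{\bar w}$ and $s=\tfrac{i}{2}Q_{\bar z}$: the relations $g=wQ$, $f=ws+zQ$ give the quadratic $z-f+i\bar{\lambda}zf^2=0$ along $\Sigma$, and the second-order derivatives $\partial_{\bar z}^{j}\partial_{\bar w}^{2-j}$ produce a coupled $3\times 3$ linear system for $(r,s,\phi)$ along $\Sigma$, solved by inverting the matrix $D$. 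You instead eliminate the multiplier at the outset by restricting the complexified mapping equation to $w=0$, $\tau=-2iz\chi$ (which annihilates $Q\vr$), so only the unknown restrictions $g_0,f_0,\phi_0$ appear, and the $\chi^0,\chi^1,\chi^2$ coefficients yield a triangular system: first $g_0=0$, then the same quadratic for $f_0$, then a linear equation for $\phi_0$ whose coefficient $-i-4\bar{\lambda}z^2=-i\,(1-4i\bar{\lambda}z^2)$ produces the denominator of $P$ directly; your expansions of $\bar f,\bar\phi,\bar g$ along $\tau=-2iz\chi$ and the resulting coefficients all check out, as do the branch selection for $f_0$ (the unique power-series root with $f_0=z+O(z^2)$) and the identity $1-4i\bar{\lambda}z^2=(1-2i\bar{\lambda}zf_0)^2$. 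Your version is a cleaner proof of the lemma as stated — no $Q$, no $r,s$, no matrix inversion — while the paper's bookkeeping additionally yields $r|_{\Sigma}$ and $s|_{\Sigma}$ within the same computation, which are reused in \cref{lem:3.3} and the subsequent jet analysis, so the multiplier formalism pays off later even though it is not needed for \eqref{eq:1jetSegre} itself.
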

Thus, $H$ is uniquely determined along $\Sigma$ by its 2-jet at the origin.
\begin{proof}
Consider the following defining function of the rational model $\lc$ of the tube over the future light cone:
\[
	\widetilde{\vr} (z,\zeta, w)
	=
	-\frac{i}{2}(w - \overline{w})(1 - |\zeta|^2) - |z|^2 - \Re \left(z^2 \overline{\zeta}\right).
\]
If $H$ sends $\mathbb{H}^3$ into $\lc$, then there is a function $Q(z,w,\zba,\wba)$ near $0$ such that $	\widetilde{\varrho}(H) = Q \varrho$. Explicitly
\begin{equation}\label{eq:mapeq}
	(g - \gba)(1 - |\phi|^2) - 2i|f|^2 -2i \Re \left(f^2 \pba\right)
	=
	Q(z,w,\zba,\wba) (w - \wba - 2i|z|^2). 
\end{equation}
If $H$ is real-analytic at $0$, then $Q$ can be taken to be real-analytic. If $H$ is smooth, then we can view $Q$ as a formal power series in $z,w,\zba, \wba$.
In both cases, we can treat $z$ and $\zba$ as separate variables and \cref{eq:mapeq} holds as an identity of formal power series.

Setting $\zba = \wba =0$ in \cref{eq:mapeq}, we have
\[
	g(z,w)=  w Q(z,w,0,0).
\]
We introduce the following auxiliary holomorphic functions (or formal power series)
\begin{equation} \label{eq:rsdef}
	r(z,w) = \frac{\partial Q}{\partial \wba}(z,w,0,0), \quad
	s(z,w) = \frac{i}{2}\frac{\partial Q}{\partial \zba}(z,w,0,0).
\end{equation}
Differentiating the mapping equation in $\wba $ we obtain \[
-(1 - |\phi|^2)\gba_{\wba} - \phi \pba_{\wba} (g-\gba) - 2i f \fba_{\wba} - 2i \fba \fba_{\wba} \phi - i f^2\pba_{\wba} = Q_{\wba} \vr - Q.
\] 
Evaluating at $\zba = \wba  =0$, using $\bar{H}_{\wba}(0,0) = (0,\lba, 1)$, and solving for $Q(z,w,0,0)$ we find that
\begin{equation}\label{eq:Qsol}
	g = wQ(z,w,0,0) = w(1+  wr + \lba \invar).
\end{equation}
Similarly, differentiating  \cref{eq:mapeq} in $\zba$ and evaluating at $\zba = \wba = 0$,
\begin{equation}\label{eq:f}
	f = ws + z Q(z,w,0,0).
\end{equation}
Along the first Segre set $\Sigma := \{w = 0\}$, we have
\[
	z-f + i \lba zf^2\bigl|_{\Sigma} = 0.
\]
Solving for $f\bigl|_{\Sigma}$, we have that
\begin{equation}\label{eq:lambdaNeq0FFirstSegre}
	f\bigl|_{\Sigma} = \frac{2z}{1+\sqrt{1-4i\lba z^2}} = z+i \lba  z^3-2 \lba ^2 z^5+O\left(z^6\right),
\end{equation}
where we choose a holomorphic branch of $\sqrt{1-4i\lba z^2}$ that is equal 1 at $z=0$ so that the right hand side is holomorphic at $z=0$.

Applying the $2^{\mathrm{nd}}$-order differential operators $\partial_{\zba}^j \partial_{\wba}^{2-j}$, $j=0,1,2$, to the mapping equation \cref{eq:mapeq} and evaluating at $w = \zba = \wba = 0$, we obtain 3 linear equations of 3 unknowns $r,s$ and $\phi$ along the first Segre set. Precisely, the following system holds along $\Sigma$.  
\begin{equation}\label{eq:rsphi}
\begin{pmatrix}
	0 & 4 i z & 1 \\
	-z & 1 & 0 \\
	i & 0 & i \lba  \\
\end{pmatrix}
\cdot
\begin{pmatrix}
	r\\
	s\\
	\phi
\end{pmatrix}
=
f\begin{pmatrix}
	-\alpha f \\
	\dfrac{i\alpha }{2} - \dfrac{\bar{\mu}}{2} f\\
	2 \bar{\nu}-\bar{\sigma} f
\end{pmatrix}.
\end{equation}
This system is solvable since the coefficient $3 \times 3$-matrix in the left hand side, denoted by $D$, is invertible with the inverse
\[
	D^{-1} = \frac{1}{1 - 4i \bar{\lambda} z^2}\begin{pmatrix}
		-\lba  & -4 i \lba  z & 1 \\
		-z\lba & -1 & z \\
		1 & 4 i z & -4 i z^2 \\
	\end{pmatrix}.
\]
Explicitly,
\begin{align}\label{eq:phiz0}
	\phi\bigl|_{\Sigma} & = \frac{2 z (\alpha +4 \bar{\nu} z)f+\left(4 \bar{\sigma} z^2+2 i \bar{\mu} z -\alpha \right)f^2}{1- 4 i\lba  z^2}\biggl|_{\Sigma},\\
	r\bigl|_{\Sigma} & = \frac{2(i\bar{\nu} - \alpha  \bar{\lambda} z) f +(\alpha  \bar{\lambda} +i\bar{\sigma} -2i \bar{\lambda}  \bar{\mu}   z)f^2}{1-4i \bar{\lambda}  z^2}\biggl|_{\Sigma}, \\
	s\bigl|_{\Sigma} & = \frac{(-\bar{\mu}  +2 \alpha  \bar{\lambda}  z+2 i \bar{\sigma}  z)f^2+i(\alpha +4 \bar{\nu}   z)f}{2(1-4i\bar{\lambda}  z^2)}\biggl|_{\Sigma}.
\end{align}
The proof is complete.
\end{proof}
\begin{lem}\label{lem:3.3} If $H$ is a map given in the normal form \eqref{eq:normalform}, then the following hold
\begin{align}
	g_w(z,0) & = 1 + i\lba f(z,0)^2, \\
	f_w(z,0) & = \frac{\lba  z g_w(z,0) \phi (z,0)+z r(z,0)+s(z,0)}{\sqrt{1-4 i \lba  z^2}}, \\
	g_{ww}(z,0) & = 2 \left(2 i \lba  f(z,0) f_w(z,0)+\lba  g_w(z,0) \phi (z,0)+r(z,0)\right)
\end{align}
In particular, the following components of the third-order derivative of $H$ are expressed in terms of its lower order derivatives
\begin{align}\label{eq:3jetfg}
	f^{(3,0)} = 6i \lba,\quad f^{(2,1)} = -\bar{\mu} + 8i \bar{\nu},
	\quad 
	g^{(3,0)} = 0, \\
	\quad 
	g^{(2,1)} = 2i \lba,\quad
	g^{(1,2)} = 4i \bar{\nu},\quad 
	\phi ^{(3,0)}=6 (8 \bar{\nu}+2 i \bar{\mu}).
\end{align}
\end{lem}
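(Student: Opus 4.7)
The plan is to view all three identities as consequences of the closed-form expressions for $Q(z,w,0,0)$, $f$, and $g$ already extracted in the proof of \cref{lem:first}. Recall that combining \eqref{eq:Qsol} and \eqref{eq:f} we have
\[
g(z,w) = wQ(z,w,0,0), \qquad f(z,w) = ws(z,w) + zQ(z,w,0,0),
\]
with $Q(z,w,0,0) = 1 + wr(z,w) + \bar{\lambda}(\phi(z,w)g(z,w) + if(z,w)^2)$. Differentiating the first identity in $w$ and evaluating at $w = 0$ gives $g_w(z,0) = Q(z,0,0,0)$, and since $g(z,0) = 0$, this immediately collapses to $g_w(z,0) = 1 + i\bar{\lambda} f(z,0)^2$, the first claim.

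For the second identity, I differentiate $f = ws + zQ(z,w,0,0)$ in $w$ and evaluate at $w = 0$ to get
\[
f_w(z,0) = s(z,0) + z\bigl(r(z,0) + \bar{\lambda}\phi(z,0)g_w(z,0) + 2i\bar{\lambda} f(z,0) f_w(z,0)\bigr),
\]
the key step being to collect the $f_w(z,0)$ terms and solve. The main algebraic observation is that the resulting denominator $1 - 2i\bar{\lambda}zf(z,0)$ simplifies to $\sqrt{1 - 4i\bar{\lambda}z^2}$; this uses \eqref{eq:lambdaNeq0FFirstSegre}: writing $\varepsilon = \sqrt{1 - 4i\bar{\lambda}z^2}$ one has $f(z,0) = 2z/(1+\varepsilon)$ and hence
\[
1 - 2i\bar{\lambda} z f(z,0) = \frac{1 + \varepsilon - 4i\bar{\lambda}z^2}{1+\varepsilon} = \frac{\varepsilon + \varepsilon^2}{1+\varepsilon} = \varepsilon.
\]
This immediately yields the claimed formula for $f_w(z,0)$. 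The third identity follows by differentiating $g_w = Q + wQ_w$ once more in $w$ and evaluating at $w = 0$, giving $g_{ww}(z,0) = 2Q_w(z,0,0,0)$, which unpacks exactly to the stated expression via the same computation as $Q_w(z,0,0,0) = r(z,0) + \bar{\lambda}\phi(z,0)g_w(z,0) + 2i\bar{\lambda}f(z,0)f_w(z,0)$.

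The final assertion, the list of third-order jets, is then extracted by Taylor expansion. From \eqref{eq:lambdaNeq0FFirstSegre} one reads off $f^{(3,0)} = 6i\bar{\lambda}$ and $g^{(3,0)} = 0$. Substituting $f(z,0)^2 = z^2 + O(z^4)$ into $g_w(z,0) = 1 + i\bar{\lambda}f(z,0)^2$ gives $g^{(2,1)} = 2i\bar{\lambda}$. Using the explicit formulas \eqref{eq:phiz0} for $\phi(z,0)$, $r(z,0)$, $s(z,0)$ together with the normal form data $\alpha, \mu, \nu, \sigma, \lambda$, one expands $s(z,0) = (i\alpha/2)z + ((-\bar{\mu}+4i\bar{\nu})/2)z^2 + O(z^3)$ and $r(z,0) = 2i\bar{\nu}z + O(z^2)$, and then reads the coefficient of $z^2$ in $f_w(z,0)$ to obtain $f^{(2,1)} = -\bar{\mu} + 8i\bar{\nu}$, the coefficient of $z$ in $g_{ww}(z,0)$ to obtain $g^{(1,2)} = 4i\bar{\nu}$, and the coefficient of $z^3$ in the expansion of $\phi(z,0)$ from \eqref{eq:phiz0} to obtain $\phi^{(3,0)} = 6(8\bar{\nu} + 2i\bar{\mu})$.

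The work is essentially algebraic bookkeeping; the only nonobvious step is the simplification $1 - 2i\bar{\lambda}zf(z,0) = \sqrt{1 - 4i\bar{\lambda}z^2}$, which is the identity that makes the formula for $f_w(z,0)$ take its claimed compact form and is forced by the fact that $f(z,0)$ is a root of $i\bar{\lambda}zX^2 - X + z = 0$.
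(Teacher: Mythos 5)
Your proposal is correct and follows essentially the same route as the paper: differentiate the identities $g = wQ(z,w,0,0)$ and $f = ws + zQ(z,w,0,0)$ from \eqref{eq:Qsol}--\eqref{eq:f} in $w$ along $\Sigma$, solve for $f_w(z,0)$ using $1-2i\bar{\lambda}zf(z,0)=\sqrt{1-4i\bar{\lambda}z^2}$ (which the paper leaves implicit), and read off the third-order jets by Taylor expansion of \eqref{eq:phiz0} and \eqref{eq:lambdaNeq0FFirstSegre}. Your expansions of $r$, $s$, $\phi$, and $f_w$ along $\Sigma$ check out and yield exactly the stated values.
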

\begin{proof} Differentiating \cref{eq:Qsol} in $w$, setting $w=0$, and substituting $g(z,0) = 0$, we find that 
\[
	g_w(z,0) = Q(z,0,0,0) = 1 + i\lba f(z,0)^2.
\]
Substituting $Q(z,w,0,0)$ from \eqref{eq:Qsol} into \eqref{eq:f} and applying $\partial_w|_{w=0}$ we obtain 
\[
	f_w(z,0) = \frac{\lba  z g_w(z,0) \phi (z,0)+z r(z,0)+s(z,0)}{\sqrt{1-4 i \lba  z^2}}.
\]
Differentiating \eqref{eq:Qsol} in $w$ twice and evaluating along $w = 0$, we have
\[
	g_{ww}(z,0) = 2 \left(2 i \lba  f(z,0) f_w(z,0)+\lba  g_w(z,0) \phi (z,0)+r(z,0)\right)
\] 
Expanding the formula for $f(z,0)$ in Lemma~\ref{lem:first} as Taylor series at $z=0$ we have $f^{(3,0)} = 6i\lba$.
\end{proof}
\begin{lem}\label{lem:cond3} Let $H = (f,\phi,g)$ be a map given in the normal form \cref{eq:normalform}. Then the following holds along $\Sigma$:
\begin{equation}\label{eq:com1}
\Delta:=\det 
\begin{pmatrix}
	-z & 0 & 0 & 2 i \lambda  f+2 i (\mu +4 i \nu ) f^2 \\
	i & -4 z & 0 & 2 \lambda+ 2 (\mu +8 i \nu ) f-\bar{\phi}^{(2,1)} f^2  \\
	0 & 2i & -2z & 4 \nu-2\bar{f}^{(1,2)} f-\bar{\phi}^{(1,2)} f^2-2(i \bar{\mu}+2\bar{\nu}) \phi\\
	0 & 0 & 3 i &i \bar{g}^{(0,3)}-2 \bar{f}^{(0,3)} f-\bar{\phi}^{(0,3)} f^2-6 i \bar{\sigma} \phi
\end{pmatrix}
=0.
\end{equation}
\end{lem}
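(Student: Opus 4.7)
The plan is to derive $\Delta = 0$ as the compatibility condition of an overdetermined linear system obtained by applying the four third-order antiholomorphic differential operators $\partial_{\bar z}^j \partial_{\bar w}^{3-j}$ for $j = 0, 1, 2, 3$ to the mapping equation \eqref{eq:mapeq}, evaluating at $\bar z = \bar w = 0$, and then restricting to $\Sigma = \{w = 0\}$. This is a third-order analogue of the derivation of the $3 \times 3$ system \eqref{eq:rsphi} in \cref{lem:first}.

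First, I would handle the right-hand side $Q\varrho$. Since $\varrho = w - \bar w - 2iz\bar z$ satisfies $\varrho_{\bar z} = -2iz$, $\varrho_{\bar w} = -1$, with all higher pure antiholomorphic derivatives vanishing, Leibniz's rule combined with the restriction $w = 0$ eliminates the third-order $Q$-derivatives and yields a linear combination of only the three second-order quantities
\[
X_1 := Q_{\bar z^2}(z, 0, 0, 0), \qquad X_2 := Q_{\bar z \bar w}(z, 0, 0, 0), \qquad X_3 := Q_{\bar w^2}(z, 0, 0, 0).
\]
Explicitly, for $j = 3, 2, 1, 0$ respectively, the four expressions become $-6iz X_1$, $-X_1 - 4iz X_2$, $-2X_2 - 2iz X_3$, and $-3 X_3$.

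Next, I would expand the left-hand side $E$ of \eqref{eq:mapeq} using Leibniz's rule. The partial normal form \eqref{eq:normalform} fixes the first- and second-order antiholomorphic jets of $H$, while \cref{lem:3.3} prescribes the \emph{known} third-order jets $f^{(3,0)}, f^{(2,1)}, g^{(3,0)}, g^{(2,1)}, g^{(1,2)}, \phi^{(3,0)}$. The remaining six \emph{unknown} conjugate third-jets, namely $\bar f^{(0,3)}, \bar f^{(1,2)}, \bar\phi^{(0,3)}, \bar\phi^{(1,2)}, \bar\phi^{(2,1)}, \bar g^{(0,3)}$, survive as symbolic quantities in the output; along $\Sigma$ only the holomorphic restrictions of $f$ and $\phi$, together with the parameters $\alpha, \lambda, \mu, \nu, \sigma$ and their conjugates, contribute. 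After rescaling the four resulting equations by $-i/6, -i, -i, -i$ respectively, the coefficient matrix of $(X_1, X_2, X_3)$ reproduces the first three columns of the matrix inside $\Delta$, and the right-hand sides reproduce the fourth column term-by-term.

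Since this $4 \times 3$ linear system admits the genuine solution given by the true values of $(X_1, X_2, X_3)$, its augmented $4 \times 4$ matrix must be rank-deficient, which is exactly the conclusion $\Delta = 0$ along $\Sigma$. The main obstacle is the meticulous third-order Leibniz bookkeeping on the left-hand side and reconciling signs and factors after the rescaling; each individual step is elementary, but the algebra is most conveniently verified by a computer algebra system.
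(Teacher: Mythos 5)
Your proposal is correct and follows essentially the same route as the paper: the paper likewise applies $\partial_{\bar z}^j\partial_{\bar w}^{3-j}$, $j=0,\dots,3$, to \eqref{eq:mapeq}, evaluates at $w=\bar z=\bar w=0$ using the jets from \cref{lem:3.3}, obtains the overdetermined $4\times 3$ system \eqref{eq:zw3} for $p=Q_{\bar z\bar z}$, $t=Q_{\bar z\bar w}$, $q=Q_{\bar w\bar w}$ along $\Sigma$ (your $X_1,X_2,X_3$ up to the same rescalings), and concludes via the Kronecker--Capelli rank condition that the augmented determinant \eqref{eq:com1} vanishes. The only content you leave to a computer check -- the term-by-term identification of the fourth column -- is omitted in the paper's proof as well.
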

\begin{proof}
Applying the $3^{\mathrm{rd}}$-order differential operators $\partial_{\zba}^j \partial_{\wba}^{3-j}$, $j=0,1,2,3$, to the mapping equation \cref{eq:mapeq}, evaluating at $w = \zba = \wba = 0$, and using Lemma~\ref{lem:3.3}, we obtain 4 linear equations of the following auxiliary holomorphic functions
\[
p(z,w) := Q_{\zba\zba}(z,w,0,0),\
t(z,w) := Q_{\zba\wba}(z,w,0,0),\
q(z,w) := Q_{\wba\wba}(z,w,0,0)
\]
along the first Segre set. For instance, for $j=0$, we can set $w = \wba = 0$ in the mapping equation and using $g(z,0)=\gba(\zba,0) = 0$, we have
\[
	f(z,0) \bar{f} (\zba ,0)+\frac{1}{2} f(z,0)^2 \bar{\phi} (\zba ,0)+\frac{1}{2} \bar{f} (\zba ,0)^2 \phi (z,0) =\zba  z Q(z,0,\zba ,0).
\]
Differentiating this three times in $\zba$ and setting $\zba = 0$ we have 
\[
	6 i \lambda  f(z,0)+6 (i\mu -4  \nu ) f(z,0)^2+3 z p(z,0) = 0.
\]
Proceeding similarly for $j=1,2,3$, we have three more equations. Omitting the detailed calculation, we conclude that
\begin{equation}\label{eq:zw3}
\begin{pmatrix}
	-z & 0 & 0 \\
	i & -4 z & 0 \\
	0 & 2 i & -2z \\
	0 & 0 & 3 i
\end{pmatrix}
\cdot
\begin{pmatrix}
	p \\
	t \\
	q
\end{pmatrix}
= 
\begin{pmatrix}
	2 i \lambda  f+2 i (\mu +4 i \nu ) f^2 \\
	2 \lambda+ 2 (\mu +8 i \nu ) f-\bar{\phi}^{(2,1)} f^2  \\
	4 \nu-2\bar{f}^{(1,2)} f-\bar{\phi}^{(1,2)} f^2-2(i \bar{\mu}+2\bar{\nu}) \phi  \\
	i \bar{g}^{(0,3)}-2 \bar{f}^{(0,3)} f-\bar{\phi}^{(0,3)} f^2-6 i \bar{\sigma} \phi
\end{pmatrix}.
\end{equation}
Observe that the $3\times 3$-matrix formed by the last 3 rows of the coefficient matrix in the left hand side of \cref{eq:zw3} is invertible. Thus, by the Kroneker-Capelli theorem, \cref{eq:zw3} has unique solution for $p$, $q$, and $t$ along the first Segre set if and only if the determinant of the augmented matrix \cref{eq:com1} vanishes identically along $\Sigma$.
\end{proof}
To reduce computational complexity, we shall divide into two cases, depending on whether $\lambda = 0$ or $\lambda \ne 0$. We first treat the case $\lambda \ne 0$.
\subsection{Case 1: \texorpdfstring{$\lambda \neq 0$}{lambda neq 0}}
The purpose of this section is to compute $H_w$ and $H_{ww}$ along $\Sigma$ for the case $\lambda \ne 0$.
\begin{lem}\label{lem:caselambdanonzero}
	If $H=(f,\phi,g)$ is given as in \cref{eq:normalform} and sends $\heis$ into $\lc$. If $\lambda\ne 0$, then
	\[
	H_w\bigl|_{\Sigma} = \left(0, \frac{2\lambda}{1+\sqrt{1- 4i \lba z^2}}, \frac{2}{1+\sqrt{1-4 i \lba z^2}}\right),
	\]
	and
	\[
	H_{ww}\bigl|_{\Sigma} = \left(\frac{8|\lambda|^2z\sqrt{1-4 i \lba  z^2}}{\left(1-4i \lba  z^2\right) \left(1+\sqrt{1-4 i \lba  z^2}\right)^2}, 0, 0\right).
	\]
\end{lem}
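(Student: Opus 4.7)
My plan is to combine the formulas along $\Sigma$ from \cref{lem:3.3} with the rational expressions for $f|_\Sigma, \phi|_\Sigma, r|_\Sigma, s|_\Sigma$ derived in \cref{lem:first}, simplifying via the Segre identity $i\lba z f(z,0)^2 = f(z,0)-z$ (which follows from the defining equation for $f|_\Sigma$), and then to invoke the constraints imposed on the normal-form parameters $\alpha, \mu, \nu, \sigma$ by the mapping equation when $\lambda \ne 0$. Writing $S := \sqrt{1-4i\lba z^2}$, so that $f(z,0) = 2z/(1+S)$ and $1-S^2 = 4i\lba z^2$, the $g$-component of $H_w|_\Sigma$ is immediate: from $g_w(z,0) = 1 + i\lba f(z,0)^2$ in \cref{lem:3.3} and the Segre identity,
\[
 g_w(z,0) \;=\; 1 + \frac{f(z,0)-z}{z} \;=\; \frac{f(z,0)}{z} \;=\; \frac{2}{1+S}.
\]

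The remaining claims reduce, after substituting the rational expressions from \cref{lem:first}, to explicit identities in $z$. Consider for example $Sf_w(z,0) = \lba z g_w(z,0)\phi(z,0) + z r(z,0) + s(z,0)$ from \cref{lem:3.3}. Substituting the expressions for $\phi|_\Sigma, r|_\Sigma, s|_\Sigma$ and reducing higher powers of $f(z,0)$ via $f^2 = (f-z)/(i\lba z)$, the right-hand side becomes a polynomial in $z$ that is linear in $f(z,0)$, with coefficients linear in $\alpha,\mu,\nu,\sigma$. Matching these coefficients to zero order by order in $z$ produces the relations $\alpha = 0$ (from the $z^1$ coefficient), $\bar\mu = 8i\bar\nu$ (from $z^2$), $\bar\sigma = 0$ (from $z^3$) and $\bar\nu = 0$ (from $z^4$, using $\lambda \ne 0$); hence also $\mu = 0$. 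These relations are not a priori consequences of the partial normal form, but they are forced on the map by the mapping equation: the third-order determinantal identity \cref{eq:com1} of \cref{lem:cond3}, expanded in $z$ along $\Sigma$, together with further identities obtained by applying $\partial_{\zba}^{a}\partial_{\wba}^{b}$ with $a+b \ge 4$ to \cref{eq:mapeq} and evaluating at $w = \zba = \wba = 0$, yield an overdetermined system on the 2-jet and the as-yet-unknown 3- and 4-jet components of $H$, whose only solution compatible with $\lambda \ne 0$ is precisely $\alpha = \mu = \nu = \sigma = 0$.

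Once this rigidity is in hand, the remaining computations are straightforward. The formulas of \cref{lem:first} collapse to $\phi|_\Sigma = r|_\Sigma = s|_\Sigma = 0$, so $f_w|_\Sigma = 0$ follows at once from \cref{lem:3.3}. The component $\phi_w|_\Sigma$ is obtained by differentiating the middle row of the lifted system \cref{eq:rsphi} once in $w$ (treating $r, s, \phi$ as holomorphic in $(z,w)$) and solving for $\phi_w(z,0)$ using $f_w|_\Sigma = 0$, giving $\phi_w(z,0) = \lambda f(z,0)/z = 2\lambda/(1+S)$. For $H_{ww}|_\Sigma$: the vanishings of $g_{ww}|_\Sigma$ and $\phi_{ww}|_\Sigma$ follow from the identities of \cref{lem:3.3} (and its $w$-differentiated analogue for $\phi$) combined with $f_w|_\Sigma = 0$ and $\phi|_\Sigma = r|_\Sigma = 0$; while $f_{ww}|_\Sigma$ is found by differentiating $Sf_w = \lba z g_w \phi + z r + s$ once more in $w$ and substituting the now-known values of $\phi_w|_\Sigma$ and $g_w|_\Sigma$, producing the closed form $8|\lambda|^2 z S / [(1-4i\lba z^2)(1+S)^2]$.

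The main obstacle is the rigidity step: verifying that the mapping equation indeed forces $\alpha = \mu = \nu = \sigma = 0$ when $\lambda \ne 0$. The third-order identity \cref{eq:com1} alone is not enough; one must analyze derivatives of order four (and in principle higher) of the mapping equation and control the unknown 3- and 4-jet components that enter at each new stage. The required manipulations are mechanical but sizeable, and are naturally carried out with a computer algebra system, in keeping with the overall strategy outlined in the introduction.
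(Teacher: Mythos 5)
Your overall architecture matches the paper's (first force the 2-jet parameters to vanish, then compute along $\Sigma$), and your computation of $g_w|_{\Sigma}$ (and, granting the rigidity, of $g_{ww}|_{\Sigma}$) is sound. But the proposal has two genuine gaps. First, the rigidity step is not proved, only asserted: you state that \cref{eq:com1} together with higher-order derivatives of \cref{eq:mapeq} forces $\alpha=\mu=\nu=\sigma=0$, but this is precisely the bulk of the paper's argument (in \cref{lem:5} the key point is that $f|_{\Sigma}=z\eta(z)$ and $f^2|_{\Sigma}=\tfrac{i}{\lba}(1-\eta(z))$ with $\eta(z)=2/(1+\sqrt{1-4i\lba z^2})$ irrational, so the expansion of the determinant splits into a rational and an irrational part which must vanish separately, giving $\mu=\nu=\sigma=0$; a separate analysis of the overdetermined $5\times4$ system for the third-order auxiliary functions $k,l,m,n$ is then needed to get $\alpha=0$). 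Moreover, the route you describe first — matching to zero the $z$-coefficients of the right-hand side of $Sf_w(z,0)=\lba z g_w\phi+zr+s$ — is circular: $f_w|_{\Sigma}$ is exactly what you are trying to show vanishes, so nothing at that stage forces those coefficients to be zero, and the relations you extract from it (e.g. $\bar\mu=8i\bar\nu$) are not justified.

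Second, and more seriously for the formulas themselves, the relations \cref{eq:rsphi} and the identity for $f_w(z,0)$ in \cref{lem:3.3} hold only along $w=0$, so they may not be differentiated in $w$. The legitimate procedure is to apply $\partial_w\partial_{\zba}^j\partial_{\wba}^{2-j}$ to \cref{eq:mapeq} before evaluating; the resulting system for $(r_w,s_w,\phi_w)$ then carries the second-order auxiliary functions $\tfrac{i}{2}(p,t,q)$ on the right-hand side, and these are not zero: along $\Sigma$ one has $p=2i\lambda\eta$, $t=0$, $q=\bar f^{(1,2)}\eta$ (the paper's \cref{lem:ptqsol2}). Your shortcut of differentiating the along-$\Sigma$ relations directly would give $(1-4i\lba z^2)\,\phi_w|_{\Sigma}=0$, i.e. $\phi_w|_{\Sigma}=0$, contradicting the formula $2\lambda/(1+\sqrt{1-4i\lba z^2})$ you are trying to prove (note also that the middle row of \cref{eq:rsphi} contains no $\phi$ at all, so it cannot be "solved for $\phi_w$"). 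The same objection applies to your treatment of $f_{ww}|_{\Sigma}$ and $\phi_{ww}|_{\Sigma}$: the correct computation requires the reflection identity $f^{(1,2)}=2|\lambda|^2$ (obtained by differentiating the twice-$w$-differentiated identity $z+i\lba zf^2-f+\lba zg\phi+wzr+ws=0$ at $z=0$), which is exactly what converts the raw expressions into the closed forms in the statement and which you never establish, and the vanishing $\phi_{ww}|_{\Sigma}=0$ further needs $f^{(2,2)}=f^{(1,3)}=0$ (hence $p_w=t_w=q_w=0$ along $\Sigma$), obtained from yet another solvability condition that your sketch does not address. There is no identity for $\phi_{ww}$ in \cref{lem:3.3} to differentiate, so that part of your argument has no support as written.
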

To prove this lemma, we first identity the coefficient $\mu, \nu$, and $\sigma$.
\begin{lem}\label{lem:5} Let $H$ be a map given in the form \cref{eq:normalform}. Assume that \cref{eq:1jetSegre} and \cref{eq:com1} hold. If $\lambda \ne 0$, then
\begin{equation}\label{eq:case1a}
	\mu = \nu = \sigma =0.
\end{equation}	
Moreover, the following equalities between 3rd-order derivatives at the origin hold:
	 \begin{equation}\label{eq:case1b}
	 	\phi^{(0,3)} = 3 \lambda f^{(1,2)}, 
	 	\phi^{(1,2)} = 0,
	 	\phi^{(2,1)} = 2i |\lambda|^2,
	 	g^{(0,3)} = 3f^{(1,2)}.
	 \end{equation}
\end{lem}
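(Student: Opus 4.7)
The plan is to expand the determinant $\Delta$ from \eqref{eq:com1} as a formal power series in $z$ along $\Sigma$, using the explicit formulas for $f|_{\Sigma}$ and $\phi|_{\Sigma}$ provided by \eqref{eq:1jetSegre} and \eqref{eq:phiz0}, and to extract sufficiently many coefficient identities to pin down the unknowns. By the normalization in \cref{thm:normalform}, we may assume $\lambda = 1$ in the case $\lambda \ne 0$, which simplifies the subsequent computations. Expanding $\Delta$ along its fourth column gives
\[
	\Delta(z) = 6i\,a_1 + 6z\,a_2 - 12iz^2\,a_3 - 8z^3\,a_4,
\]
where $a_1, \ldots, a_4$ denote the last-column entries. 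Substituting $f(z,0) = z + i\bar{\lambda} z^3 - 2\bar{\lambda}^2 z^5 + O(z^7)$ (so $f(z,0)^2 = z^2 + 2i\bar{\lambda}z^4 - 5\bar{\lambda}^2 z^6 + O(z^8)$), together with the series expansion of $\phi(z,0)$ derived from \eqref{eq:phiz0} in terms of $\alpha, \mu, \nu, \sigma, \bar{\lambda}$, the identity $\Delta(z) \equiv 0$ becomes an infinite system of complex equations, one per power of $z$.

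The coefficients of $z^0, z^1, z^2$ vanish identically. The $z^3$ coefficient produces
\[
	-12i|\lambda|^2 - 6\bar{\phi}^{(2,1)} + 24i\bar{f}^{(1,2)} - 8i\bar{g}^{(0,3)} = 0,
\]
relating only the unknown third-order jets $\phi^{(2,1)}, f^{(1,2)}, g^{(0,3)}$. The $z^4$ coefficient introduces $\mu, \bar{\mu}, \bar{\nu}, \bar{\phi}^{(1,2)}, \bar{f}^{(0,3)}$; the coefficients of $z^5, z^6, z^7$ bring in further couplings, including quadratic products like $\bar{\mu}^2, \bar{\mu}\bar{\nu}, \bar{\nu}^2$, and products involving $\alpha\bar{\sigma}$. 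The resulting system, taken together with the complex conjugates of each equation, is linear in the unknown third-order derivatives. I would then solve it sequentially: the $z^3$ equation paired with its conjugate relates $\phi^{(2,1)}$ to $f^{(1,2)}$ and $g^{(0,3)}$; the $z^4$ and $z^6$ equations force $\phi^{(1,2)} = 0$ (yielding $f^{(0,3)} = 0$ as a byproduct); the $z^5$ equation and its higher-order companions then pin down $\phi^{(2,1)} = 2i|\lambda|^2$, $\phi^{(0,3)} = 3\lambda f^{(1,2)}$, and $g^{(0,3)} = 3 f^{(1,2)}$. Finally, viewing the quadratic parts of the higher-order equations as a linear system in $\mu, \nu, \sigma$, one finds that its only solution is trivial, giving $\mu = \nu = \sigma = 0$.

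The main obstacle is the sheer algebraic volume: at each order of $z$ the coefficient of $\Delta$ contains many cross terms in the parameters, and the tight coupling between the 2-jet unknowns $\mu,\nu,\sigma$ and the 3-jet unknowns limits the opportunities for shortcuts. As the authors note in the introduction, a computer algebra system is used to handle the formal expansion of the determinant and the solution of the resulting polynomial system; however, each individual step is a routine polynomial manipulation carried out directly from the explicit formulas \eqref{eq:1jetSegre} and \eqref{eq:phiz0}, and the structure of the elimination is essentially triangular once the lowest-order coefficients have been processed.
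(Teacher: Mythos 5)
Your starting point is the same as the paper's: the hypothesis is exactly the identity $\Delta\equiv 0$ of \eqref{eq:com1} along $\Sigma$, with $f|_\Sigma$ and $\phi|_\Sigma$ given by \eqref{eq:1jetSegre} and \eqref{eq:phiz0}, and your cofactor expansion $\Delta=6i\,a_1+6z\,a_2-12iz^2a_3-8z^3a_4$ and the observation that the $z^0,z^1,z^2$ coefficients vanish identically are both correct. The difference is organizational but decisive: the paper writes $f|_\Sigma=z\eta(z)$, $f^2|_\Sigma=\tfrac{i}{\bar\lambda}(1-\eta(z))$ with $\eta(z)=2\bigl(1+\sqrt{1-4i\bar\lambda z^2}\bigr)^{-1}$ and uses the \emph{irrationality} of $\eta$ (this is precisely where $\lambda\neq 0$ enters) to split $\Delta\equiv 0$ into two \emph{polynomial} identities $A\equiv 0$, $B\equiv 0$; the constant term of the irrational part gives $\mu+4i\nu=0$ at once, and the remaining conclusions are read off from finitely many explicit polynomial coefficients. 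Your raw Taylor expansion contains the same information in principle, but at every order the rational and irrational contributions are mixed, and your claimed deduction chain does not survive a check of the actual coefficients. Concretely, the $z^4$ coefficient of $\Delta$ is
\begin{equation*}
-12i\,\bar\lambda\mu+12i\,\bar\phi^{(1,2)}-24\alpha\bar\mu+48i\,\alpha\bar\nu+16\,\bar f^{(0,3)}=0,
\end{equation*}
which (even together with its conjugate and the $z^6$ equation) cannot force $\phi^{(1,2)}=0$: in fact the correct intermediate relations are $\bar\phi^{(1,2)}=\bar\lambda\mu+i\alpha\bar\mu$ and $\bar f^{(0,3)}=\tfrac32\alpha\bar\mu$, so $\phi^{(1,2)}$ vanishes only \emph{after} one knows $\mu=0$, not before.

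The second, more serious gap is your final step. The determination of $\mu=\nu=\sigma=0$ is not the triviality of a linear system in $(\mu,\nu,\sigma)$: what the vanishing of $\Delta$ actually yields (in the paper, from the coefficients of $B$ and $A$) are the coupled \emph{quadratic} relations $4\alpha\bar\sigma+\bar\mu^2=0$, $\bar\mu(\alpha\bar\lambda+i\bar\sigma)=0$, $\bar\sigma\bar\mu=0$, together with a relation involving $6\bar\sigma^2+\bar\lambda(\bar\phi^{(0,3)}-3\bar\lambda\bar f^{(1,2)})$, and one must run a short case analysis ($\sigma=0$ or $\mu=0$, then both) to conclude; treating ``the quadratic parts as a linear system'' is not a proof, and nothing in your plan shows that the finitely many orders you propose to use suffice to close the system. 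A smaller point: reducing to $\lambda=1$ via \cref{thm:normalform} changes the map, whereas the lemma is stated for arbitrary $\lambda\neq 0$ and its conclusions involve $\lambda$; this reduction can be justified by tracking how $\mu,\nu,\sigma$ and the third-order jets rescale under the extra automorphisms, but you would need to say so. As it stands, the proposal identifies the right equations but neither reproduces the paper's finite, transparent elimination (via the rational/irrational splitting) nor supplies a correct alternative elimination, so the core of the lemma --- $\mu=\nu=\sigma=0$ and the identities \eqref{eq:case1b} --- is not actually established.
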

\begin{proof} To reduce our computations, we note that for $\lambda \ne 0$, we can write
\[
	f|_{\Sigma} = z \eta(z), \ f^2|_{\Sigma} = \frac{i}{\lba}(1-\eta(z)), \text{ for } \ \eta(z) := 2\left(1+ \sqrt{1- 4i \lba z^2}\right)^{-1},
\] 
where we choose the holomorphic branch of the squared root having value $1$ at $z=0$. Plugging these into the formula for $\phi|_{\Sigma}$ in \cref{eq:phiz0}, we find that
\[
\phi|_{\Sigma}
	=
	u(z) + v(z) \eta(z)
\]
for some rational functions $u(z)$ and $v(z)$ holomorphic at $z=0$. Expanding the determinant in \cref{eq:com1} along the first row, we have
\[
	\Delta
	=
	-z \Delta_{1,1} - (2 i \lambda  f+2 i (\mu +4 i \nu ) f^2)\Delta_{1,4},
\]
where $\Delta_{1,1}$ and $\Delta_{1,4}$ are the corresponding minors. Observe that $\Delta_{1,4} = -6i$ and, by the formulas for $f, f^2$ and $\phi$ along the first Segre set above, $\Delta_{1,1} = C(z) + D(z) \eta(z)$ for some rational functions $C(z)$ and $D(z)$ holomorphic at the origin. Therefore,
\[
	\Delta
	=
	-\frac{12i}{\lba}(\mu +4 i \nu ) - z C(z) + \left(\frac{12i}{\lba}(\mu +4 i \nu ) -12 \lambda z - z D(z)\right) \eta(z).	
\]
Since $\eta(z)$ is irrational when $\lambda \ne 0$, the vanishing of $\Delta$ is equivalent to
\[
	\nu = \frac{i \mu }{4}, \quad C(z) = 0, \quad D(z) = -12 \lambda.
\]
The last identity is expressed explicitly as
\[
\det \begin{pmatrix}
	-4 z & 0 & 2 \lambda-2 \mu  f-\bar{\phi}^{(2,1)}f^2 \\
	2 i & -2 z & i \mu  -2 \bar{f}^{(1,2)} f-\bar{\phi}^{(1,2)} f^2-i \bar{\mu} \phi\\
	0 & 3 i & i \bar{g}^{(0,3)}-2 \bar{f}^{(0,3)} f-\bar{\phi }^{(0,3)}f^2-6 i \bar{\sigma} \phi \\
\end{pmatrix}
= -12 \lambda \eta(z).
\]
Expanding the determinant along the first row, cancelling the right hand side, and dividing resulting equation by $-2$, we find that
\begin{multline}
	2f \left(4 z^2 \bar{f}^{(0,3)}+6 i z \bar{f}^{(1,2)}-3 \mu \right)+f^2 \left(4 z^2 \bar{\phi}^{(0,3)}+6 i z \bar{\phi }^{(1,2)}-3 \bar{\phi}^{(2,1)}\right)\\+6\phi\left(4 i\bar{\sigma} z^2-\bar{\mu} z\right) -4 i z^2 \bar{g}^{(0,3)}  +6 \mu  z +6 \lambda (1-\eta)  = 0.
\end{multline}
Substituting $f$ and $\phi$ along $\Sigma$ from \cref{eq:1jetSegre} and collecting the term $\sqrt{1-4i\bar{\lambda} z^2}$, we can rewrite the equation as
\[
	\frac{A(z) + B(z) \sqrt{1-4 i \lba z^2}}{\left(4 \lba  z^2+i\right) \left(1+\sqrt{1-4 i \lba  z^2}\right)^2} = 0,
\]
where $A$ and $B$ are polynomials in $z$. The vanishing of $\Delta$ implies that $A$ and $B$ must vanish identically. Explicitly,
\begin{align*}
	B(z) 
	= 
	4z^2\{8\left(2\lba  \bar{f}^{(0,3)}+3 i \bar{\mu} \bar{\sigma}\right)z^3+2\left(12 i \lba  \bar{f}^{(1,2)}-4 i \lba  \bar{g}^{(0,3)}-12 \alpha  \bar{\sigma}-3 \bar{\mu }^2\right)z^2 \\
	+2\left(2 i \bar{f}^{(0,3)}-3 i \alpha  \bar{\mu}\right)z + 2 \bar{g}^{(0,3)}-6 \bar{f}^{(1,2)}\}.
\end{align*}
Since $B(z)$ must vanish identically, equating the coefficient of the lowest degree in $z$, that of $z^2$,  yields
\[
	\bar{g}^{(0,3)} = 3 \bar{f}^{(1,2)}.
\]
The vanishing of the coefficient of $z^3$ yields
\[
	\bar{f}^{(0,3)} = 3\frac{\alpha  \bar{\mu}}{2}.
\]
These together with the vanishing of the terms of degree 4 in $z$ yield
\[
	4\alpha  \bar{\sigma}+\bar{\mu }^2 = 0.
\]
Combining these 3 with the vanishing of the degree 5 term yields
\[
	\bar{\mu }(\alpha\lba + i \bar{\sigma}) = 0.
\]
On the other hand, we write
\begin{align*}
	A(z)
	=
	4z^2\{16 z^4 \left(6 \bar{\sigma} ^2+\bar{\phi} ^{(0,3)}\lba -\lba^2  \bar{g}^{(0,3)}\right)+8 z^3 \left(2 \lba  \bar{f} ^{(0,3)}+3 i \lba  \bar{\phi} ^{(1,2)}-3 i \lba ^2 \mu -6 i \bar{\mu} \bar{\sigma} \right)\\
	+2 z^2 \left(12 i \lba  \bar{f} ^{(1,2)}-6 i \lba  \bar{g}^{(0,3)}-6 \lba  \bar{\phi} ^{(2,1)}+2 i \bar{\phi} ^{(0,3)}-12 i \lambda  \lba ^2+3 \bar{\mu}^2\right)\\
	+z \left(4 i \bar{f} ^{(0,3)}-6 \bar{\phi} ^{(1,2)}+6 \lba  \mu \right)-6 \bar{f} ^{(1,2)}+2 \bar{g}^{(0,3)}-3 i \bar{\phi} ^{(2,1)}+6 |\lambda|^2\}. 
\end{align*}
Since $A$ must vanish identically, equating the coefficient of $z^2$ to zero and using the relation $\bar{g}^{(0,3)} = 3 \bar{f}^{(1,2)}$ obtained above, we find that
\[
	\bar{\phi}^{(2,1)}= -2 i |\lambda|^2.
\] 
Putting these together we obtain
\begin{align*}
	A(z) = 4z^2\{16 z^4 \left(6 \bar{\sigma} ^2+\lba(\bar{\phi} ^{(0,3)} -3\lba  \bar{f}^{(1,2)})\right)+ 8 z^3 \left(3 i \lba  \bar{\phi} ^{(1,2)}-3 i \lba ^2 \mu -6 i \bar{\mu} \bar{\sigma} + 3 \alpha  \lba \bar{\mu}\right)\\
	+2 z^2 \left(2 i (\bar{\phi} ^{(0,3)} - 3\lba\bar{f} ^{(1,2)})+3 \bar{\mu}^2\right)+ 6z \left(i \alpha  \bar{\mu}+ \lba  \mu - \bar{\phi} ^{(1,2)}\right)\}.
\end{align*}
Equating the coeficient of $z^3$ to zero we find that
\[
	\bar{\phi}^{(1,2)}= \bar{\lambda} \mu +i \alpha \bar{\mu}.
\]
Plugging this back to the formula for $A(z)$, we find that the coefficient of $z^5$ is a multiple of $\bar{\sigma}\bar{\mu}$. Equating this to be zero, we find that either $\bar{\sigma} = 0$ of $\bar{\mu}=0$. But in either case, the vanishing of the coefficients of $z^6$ and $z^4$ in $A(z)$ together imply that $\bar{\sigma} = \bar{\mu} = 0$. Moreover,
\[
	\bar{\phi} ^{(0,3)} = 3\lba\bar{f} ^{(1,2)}.
\]
That $\bar{\phi}^{(1,2)} = 0$ is obvious since $\mu = 0$.
\end{proof}
\begin{lem}\label{lem:ptqsol2} If $H$ is a map given by \cref{eq:normalform}. If $\lambda\ne0$, then the auxiliary functions $p,t,q$ along the first Segre set $\Sigma$ are given by
	\[
	p(z,0) = 2i \lambda \eta(z), \ t(z,0) = 0, \ q(z,0)= \bar{f}^{(1,2)}\eta(z).
	\]
\end{lem}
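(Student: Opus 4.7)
The linear system \eqref{eq:zw3} derived in Lemma~\ref{lem:cond3} already determines the three unknown functions $p(z,0)$, $t(z,0)$, $q(z,0)$ along the first Segre set $\Sigma$: although the coefficient matrix is $4\times 3$ and thus overdetermined, the hypothesis $\Delta=0$ used to derive the system guarantees solvability. My plan is to solve this system row by row after drastically simplifying it using the normalization established in Lemma~\ref{lem:5}.

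\textbf{Key steps.} Lemma~\ref{lem:5} gives, for $\lambda\ne 0$, the vanishing $\mu=\nu=\sigma=0$, the identities $\phi^{(2,1)}=2i|\lambda|^2$, $\phi^{(1,2)}=0$, $\phi^{(0,3)}=3\lambda f^{(1,2)}$ and $g^{(0,3)}=3f^{(1,2)}$, as well as the intermediate relation $f^{(0,3)}=\tfrac{3}{2}\alpha\bar\mu=0$ that appeared during its proof. Substituting all of these into the right-hand side of \eqref{eq:zw3} collapses its four entries into expressions involving only $\lambda$, $\bar f^{(1,2)}$, $f$ and $f^2$. Next, I invoke the closed-form expressions $f\bigl|_\Sigma=z\eta$ and $f^2\bigl|_\Sigma=(i/\bar\lambda)(1-\eta)$, where the second comes from rationalizing \eqref{eq:lambdaNeq0FFirstSegre} via the quadratic identity $\bar\lambda z^2\eta^2=\eta-1$ satisfied by $\eta$. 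Under these substitutions the right-hand side becomes rational in $\eta$. The first row of the system then yields $p\bigl|_\Sigma$ as a scalar multiple of $\eta$ directly from $-zp=2i\lambda z\eta$. The fourth row produces $q\bigl|_\Sigma=\bar f^{(1,2)}\eta$ after the cancellation $3i\bar f^{(1,2)}-3\bar\lambda\bar f^{(1,2)}\cdot(i/\bar\lambda)(1-\eta)=3i\bar f^{(1,2)}\eta$. Back-substituting this $q$ into the third row collapses it to $2it=0$, i.e.\ $t\bigl|_\Sigma=0$, and the remaining second row is automatically satisfied by the values just found and serves as a consistency check confirming that Lemma~\ref{lem:cond3}'s compatibility really is in force.

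\textbf{Main obstacle.} Conceptually the argument is a clean linear-algebraic reduction once Lemma~\ref{lem:5} is available, so no new geometric input is needed. The pivotal algebraic fact is that the quadratic identity $\bar\lambda z^2\eta^2=\eta-1$ converts the square-root expressions in $\bar\lambda z^2$ present in $f$ and $f^2$ into rational functions of $\eta$, which is precisely what allows the clean factor $\eta$ to emerge in each component. The most error-prone aspect is the careful tracking of signs and complex conjugates when feeding the third-jet data into the entries involving $\bar g^{(0,3)}$, $\bar f^{(0,3)}$ and $\bar\phi^{(0,3)}$ in the fourth row; the cancellation hinges on the \emph{specific} values dictated by Lemma~\ref{lem:5}, and any deviation from them would immediately break the compatibility of the overdetermined system.
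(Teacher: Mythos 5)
Your proposal is correct and follows essentially the same route as the paper: substitute the conclusions of \cref{lem:5} together with $f|_\Sigma = z\eta$ and $f^2|_\Sigma = (i/\bar{\lambda})(1-\eta)$ into the overdetermined system \eqref{eq:zw3} and solve, the only immaterial difference being which three rows you solve and which one you retain as a consistency check. Note that your first-row computation $-zp = 2i\lambda z\eta$ yields $p|_\Sigma = -2i\lambda\eta$, which is exactly what the paper's own proof produces from its second row as well, so the sign discrepancy with the displayed statement of the lemma lies in the paper, not in your argument.
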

\begin{proof} By \cref{lem:5}, the conditions \cref{eq:case1a} and \cref{eq:case1b}. Plugging these into the system \cref{eq:zw3} we have
\[
\begin{pmatrix}
	i & -4 z & 0 \\
	0 & 2 i & -2z \\
	0 & 0 & 3 i
\end{pmatrix}
\cdot
\begin{pmatrix}
	p \\
	t \\
	q
\end{pmatrix}
= 
\begin{pmatrix}
	2 \lambda+2i |\lambda|^2 f^2  \\
	-2\bar{f}^{(1,2)} f\\
	3i \bar{f}^{(1,2)}-3\lba\bar{f}^{(1,2)} f^2
\end{pmatrix}
=
\eta(z)
\begin{pmatrix}
	2 \lambda  \\
	-2\bar{f}^{(1,2)} z\\
	3i \bar{f}^{(1,2)}
\end{pmatrix}.
\]
This can be solved easily to obtain the desired formulas for $p,t$, and $q$.
\end{proof}
\begin{lem}
	Assume that $\lambda\ne 0$. Then $\alpha = 0$ and
	\begin{align*}
		\phi ^{(3,1)} =\phi ^{(2,2)} = f^{(0,4)}= 0,\ g^{(0,4)} =  4 f^{(1,3)},\\ g^{(1,3)} =  \frac{3}{2} f^{(2,2)}, \phi ^{(1,3)}= \frac{3}{2}\lba  f^{(2,2)},\phi ^{(0,4)}= 4\lba  f^{(1,3)}.
	\end{align*}
\end{lem}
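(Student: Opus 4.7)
The proof parallels that of \cref{lem:cond3} but uses one order higher: apply the five $4^{\mathrm{th}}$-order differential operators $\partial_{\zba}^{\,j}\partial_{\wba}^{\,4-j}$, $j=0,1,2,3,4$, to the mapping equation \cref{eq:mapeq} and evaluate at $w=\zba=\wba=0$. This produces a system of five linear equations along $\Sigma$ in the four auxiliary holomorphic functions
\[
	P_k(z,w) := \partial_{\zba}^{\,3-k}\partial_{\wba}^{\,k} Q(z,w,0,0), \quad k=0,1,2,3.
\]
In analogy with \cref{eq:zw3}, the coefficient matrix of this system will have a $4 \times 4$ lower submatrix (formed by the last four rows) whose diagonal consists of nonzero multiples of $i$, hence is invertible along $\Sigma$. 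By the Kronecker-Capelli theorem, the system is consistent if and only if the determinant of the $5 \times 5$ augmented matrix, call it $\widetilde{\Delta}$, vanishes identically on $\Sigma$; this produces one scalar identity relating $\alpha$, the 3-jet, and the 4-jet of $H$ at the origin.

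To extract the content of $\widetilde{\Delta} \equiv 0$, substitute the explicit expressions of $H|_\Sigma$, $H_w|_\Sigma$, $H_{ww}|_\Sigma$ from \cref{lem:first,lem:caselambdanonzero}, together with $p|_\Sigma,\,t|_\Sigma,\,q|_\Sigma$ from \cref{lem:ptqsol2} and the 3-jet relations \cref{eq:case1a,eq:case1b,eq:3jetfg} established in \cref{lem:5}. All of these are rational in $z$ and $\eta(z) = 2/(1+\sqrt{1-4i\lba z^2})$, with coefficients involving only $\alpha,\lambda$ and the unknown fourth-order jet entries
\[
	\bigl\{\phi^{(3,1)},\ \phi^{(2,2)},\ \phi^{(1,3)},\ \phi^{(0,4)},\ f^{(0,4)},\ f^{(1,3)},\ f^{(2,2)},\ g^{(0,4)},\ g^{(1,3)}\bigr\}.
\]
After clearing denominators and separating the irrational piece, the condition takes the form $\widetilde{A}(z)+\widetilde{B}(z)\sqrt{1-4i\lba z^2}\equiv 0$ with $\widetilde{A},\widetilde{B}$ polynomial in $z$. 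Since $\sqrt{1-4i\lba z^2}$ is irrational when $\lambda\ne 0$, both $\widetilde{A}$ and $\widetilde{B}$ must vanish identically.

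Equating the coefficients of these polynomials to zero gives a triangular cascade of linear conditions. The lowest-order coefficient is expected to be proportional to $\alpha\lambda$, since in \cref{lem:5} the only place where $\alpha$ survived after the weight-$3$ analysis was through its coupling with $\lambda$ via $\bar{\phi}^{(2,1)} = -2i|\lambda|^2$ and related entries; this forces $\alpha = 0$. With $\alpha = 0$ substituted, the subsequent coefficients are each solvable for a single 4-jet entry in succession, yielding $\phi^{(3,1)} = 0$, $\phi^{(2,2)} = 0$, $f^{(0,4)} = 0$ from the pure-vanishing coefficients, and the four proportionalities $g^{(0,4)} = 4 f^{(1,3)}$, $g^{(1,3)} = \tfrac{3}{2} f^{(2,2)}$, $\phi^{(1,3)} = \tfrac{3}{2}\lba f^{(2,2)}$, $\phi^{(0,4)} = 4\lba f^{(1,3)}$ from the remaining ones.

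The main obstacle is purely computational: the $5 \times 5$ determinant must be expanded in symbolic entries that mix the $\eta(z)$-dependent rational functions for $H|_\Sigma,H_w|_\Sigma,H_{ww}|_\Sigma$ with nine unknown 4-jet parameters. As the authors note in the introduction, such expansions are most efficiently handled with computer algebra; once the cascade is laid out, each step is a one-line elimination and no new structural ingredient beyond the Kronecker-Capelli argument of \cref{lem:cond3} is required. The essential conceptual content is that, once $\lambda \ne 0$ activates the irrational branch $\sqrt{1-4i\lba z^2}$, the $2$-nondegeneracy of $\lc$ collapses the 4-jet to the two holomorphic parameters $f^{(1,3)}$ and $f^{(2,2)}$ together with $\lambda$, and simultaneously rules out $\alpha \ne 0$.
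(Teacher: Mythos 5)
Your route is essentially the paper's: apply the five operators $\partial_{\zba}^{j}\partial_{\wba}^{4-j}$, $j=0,\dots,4$, to \cref{eq:mapeq}, evaluate at $w=\zba=\wba=0$, note that the lower $4\times 4$ block of the coefficient matrix is invertible, and impose (Kronecker--Capelli) the identical vanishing along $\Sigma$ of the $5\times 5$ augmented determinant; splitting the resulting condition into its rational part and its multiple of $\sqrt{1-4i\lba z^{2}}$, the rational part forces $\alpha=0$ (the paper extracts this by setting $z=0$), and the remaining coefficient conditions give the listed relations among the fourth-order jet entries. This is exactly the mechanism of the paper, at the same level of computational detail.

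One point needs repair. You propose to substitute $H_w|_{\Sigma}$ and $H_{ww}|_{\Sigma}$ from \cref{lem:caselambdanonzero}; as stated this is circular, since in the paper \cref{lem:caselambdanonzero} is proved \emph{after} the present lemma and its proof uses $\alpha=\sigma=\mu=\nu=0$ (its formulas already encode $\alpha=0$). Fortunately these quantities never enter the equations you generate: the operators differentiate only in the barred variables, so after evaluation the unbarred data appearing are just $f|_{\Sigma}$, $\phi|_{\Sigma}$, $g|_{\Sigma}=0$ from \cref{lem:first} together with the derivatives of $Q$ along $\Sigma$, of which the third-order ones $k,l,m,n$ are the unknowns, while the barred data are the jets of $H$ at $0$ up to order $4$, constrained by \cref{lem:5}. (Similarly, $p,t,q$ from \cref{lem:ptqsol2} are not actually needed, though citing them is harmless since that lemma precedes this one.) Deleting the appeal to \cref{lem:caselambdanonzero} removes the circularity and leaves an argument that coincides with the paper's; the ``triangular cascade'' you describe for the remaining coefficients is the same direct expansion the paper performs (and largely leaves to computation) to obtain $\phi^{(3,1)}=\phi^{(2,2)}=f^{(0,4)}=0$ and the four proportionalities.
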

\begin{proof} We introduce the following auxiliary holomorphic functions
\begin{align}\label{eq:klmndef}
k(z,w) = Q_{\zba\zba\zba}(z,w,0,0),\
l(z,w) = Q_{\zba\zba\wba}(z,w,0,0), \notag\\
m(z,w) = Q_{\zba\wba\wba}(z,w,0,0),\
n(z,w) = Q_{\wba\wba\wba}(z,w,0,0).
\end{align}
We proceed similarly to the proof of Lemma~\ref{lem:cond3} as follows.
Applying the differential operator $\partial_{\zba}^j\partial_{\wba}^{4-j}$, $j=0,1,2,3,4$, to the mapping equation \cref{eq:mapeq} and evaluating at $w= \zba=\wba = 0$, we obtain an overdetermined system of 5 linear equations of 4 unknowns that must be satisfied by $k, l, m, n$ restricted to the first Segre set $\Sigma = \{w=0\}$. Explicitly, the following holds when $w=0$:
\begin{multline}
\begin{pmatrix}
	-z & 0 & 0 & 0 \\
	i & -6 z & 0 & 0 \\
	0 & 2 i & -4 z & 0 \\
	0 & 0 & 3 i & -2 z \\
	0 & 0 & 0 & 4 i \\
\end{pmatrix}
\cdot
\begin{pmatrix}
	k\\
	l\\
	m\\
	n
\end{pmatrix}
\\
= \begin{pmatrix}
	6 i \lambda (2\alpha f^2 + \phi) \\
	30 \alpha  \lambda  f-\bar{\phi}^{(3,1)} f^2 \\
	-2 \bar{f}^{(2,2)} f-\bar{\phi}^{(2,2)} f^2-4 \bar{f}^{(1,2)} \phi +\alpha ^2 \phi-4 |\lambda|^2\phi-2 i \bar{\phi}^{(2,1)}\phi-4 i \alpha  \lambda  \\
	-2 \bar{f}^{(1,3)}f-\bar{\phi}^{(1,3)} f^2-2 \bar{f}^{(0,3)}\phi+i \bar{g}^{(1,3)}-3 i \bar{\phi}^{(1,2)} \phi \\
	-2 \bar{f}^{(0,4)} f-\bar{\phi}^{(0,4)} f^2-4 i \phi\left(\bar{\lambda} \bar{g}^{(0,3)}+\bar{\phi}^{(0,3)}\right)+i \bar{g}^{(0,4)} \\
\end{pmatrix}
\end{multline}
The $4\times 4$-matrix formed by the last 4 rows of the coefficient matrix is invertible (its determinant is 24). Thus, by Kroneker-Capelli theorem again, the solvability of this system is equivalent to the vanishing (identically along $\Sigma$) of the determinant of the augmented $5 \times 5$-matrix which we denoted by $\Delta$.

Since $\mu = \nu = \sigma = 0$, we have from \cref{e:phiz0}
\begin{equation}\label{e:phiz0}
	\phi\bigl|_{\Sigma} = \alpha\frac{2 zf - f^2}{1- 4 i\lba  z^2}\biggl|_{\Sigma}.
\end{equation}
Therefore, one can write
\[
	6 i \lambda (2\alpha f^2 + \phi)
	=
	-\frac{6 \alpha  \lambda  \left(8 \lba  z^2+i\right)}{\lba  \left(4 \lba  z^2+i\right)}
	+ E(z) \eta(z),
\]
where $E(z)$ is a rational function of $z$ holomorphic at $z=0$. Expanding the determinant $\Delta$ of the $5\times 5$ augmented matrix along the first row, we have
\[
	\Delta
	= -z \Delta_{1,1} + (6 i \lambda (2\alpha f^2 + \phi)) \Delta_{1,5},
\]
where $\Delta_{1,1}$ and $\Delta_{1,5}$ are the corresponding minors.
Arguing as before, we have 
\[
	\Delta_{1,1}
	=
	F(z) + G(z) \eta(z),
\]
where $F(z)$ and $G(z)$ are also rational in $z$ and holomorphic at $z=0$, while $\Delta_{1,5} = 24$. Plugging this back into the formula for $\Delta$, we have
\[
	-z F(z) -24\frac{6 \alpha  \lambda  \left(8 \lba  z^2+i\right)}{\lba  \left(4 \lba  z^2+i\right)} + \eta(z)(24E(z) - z G(z)).
\]
By irrationality of $\eta(z)$, $\Delta$ vanishes identically iff
\[
	-z F(z) -24\frac{6 \alpha  \lambda  \left(8 \lba  z^2+i\right)}{\lba  \left(4 \lba  z^2+i\right)} = 0,\quad
	24E(z) - z G(z) = 0.
\]
Setting $z=0$ in the first equation, we find that $\alpha = 0$, as desired.
\end{proof}
With the corresponding solvability conditions are satisfied, we can solve for $k, l, m$, and $n$ from
\begin{equation}\label{eq:klmn}
		\begin{pmatrix}
			i & -6 z & 0 & 0 \\
			0 & 2 i & -4 z & 0 \\
			0 & 0 & 3 i & -2 z \\
			0 & 0 & 0 & 4 i \\
		\end{pmatrix}
		\cdot
		\begin{pmatrix}
			k\\
			l\\
			m\\
			n
		\end{pmatrix}
		=
		\eta(z)
		\begin{pmatrix}
			0 \\
			-2 \bar{f}^{(2,2)} z\\
			-2 \bar{f}^{(1,3)}z-\dfrac{3i}{2} \bar{f}^{(2,2)} \\
			4i\bar{f}^{(1,3)} 
		\end{pmatrix}
\end{equation}
to obtain the following identity along $\Sigma$:
\begin{align}\label{eq:klmnsol}
	k(z,0) = m(z,0) = 0,\quad 
	m(z,0) = \frac{1}{2} \bar{f} ^{(2,2)} \eta (z), \quad
	n(z,0) = \bar{f} ^{(1,3)} \eta (z).
\end{align}

\begin{proof}[Proof of Lemma \ref{lem:caselambdanonzero}]
Differentiating the mapping equation with respect to $\zba$ and $\wba$ and combining the two equations, we obtain
\begin{equation}\label{e21}
	z+ i \lba  z f(z,w)^2-f(z,w)+\lba  z g(z,w) \phi (z,w)+w z r(z,w)+w s(z,w)=0.
\end{equation}
Differentiating this with respect to $w$ and setting $w=0$ yields
\[
(2 i \lba  z f-1)f_w +z r+s\bigl|_{\Sigma} = 0.
\]
Plugging $\alpha = \sigma = \mu =\nu = 0$ into \cref{eq:rsphi}, we easily find that $r=s=0$ along $w=0$. Thus
\[
f_w\bigl|_{\Sigma} = 0.
\]

Applying $\partial_{\zba}^j \partial_{\wba}^{2-j} \partial_w$, $j=0,1,2$, to the mapping equation and evaluating along $\Sigma$, we obtain
\[
	\begin{pmatrix}
		0 & 4 i z & 1 \\
		-z & 1 & 0 \\
		i & 0 & i \lba  \\
	\end{pmatrix}
	\cdot
	\begin{pmatrix}
		r_w\\
		s_w\\
		\phi_w
	\end{pmatrix}
	=
	\frac{i}{2}
	\begin{pmatrix}
		p\\
		t\\
		q
	\end{pmatrix}.
\]
Plugging the formulas for $p,t$, and $q$ along $\Sigma$ from \cref{lem:ptqsol2}, we find that
\[
	\phi_w\bigl|_{\Sigma}
	=
	\frac{\eta(z)  \left(\lambda-2i z^2 \bar{f} ^{(1,2)} \right)}{1-4 i\lba  z^2}
\]
We claim that
	\[f^{(1,2)}= 2| \lambda|^{2}.\]
To show this, we differentiate \cref{e21} twice in $w$ and setting $w=0$, using $\phi=0$, $f_w = 0$, $g_w=1+\lba f^2$ along $\Sigma$ to obtain
\[
	f_{ww}\bigl|_{\Sigma}
	=
	-\frac{4 z \left(2 |\lambda|^2 -\bar{f} ^{(1,2)} \left(1+\sqrt{1-4 i \lba  z^2}\right)\right)}{\left(1-4i \lba  z^2\right) \left(1+\sqrt{1-4 i \lba  z^2}\right)^2}.
\]
Differentiating this equation in $z$ and evaluating at $z=0$ we find that
\[
	f^{(1,2)} = -2 |\lambda|^2+2\bar{f} ^{(1,2)}.
\]
Hence $f^{(1,2)}=2 |\lambda|^2$ and the claim follows. Thus
\[
	f_{ww}\bigl|_{\Sigma}
	=
	\frac{8|\lambda|^2z\sqrt{1-4 i \lba  z^2}}{\left(1-4i \lba  z^2\right) \left(1+\sqrt{1-4 i \lba  z^2}\right)^2}.
\]

In the next step, we solve for $p_w, t_w$ and $q_w$ along the first Segre set. To this end, we apply $\partial_{\zba}^j \partial_{\wba}^{3-j} \partial_w$, $j=0,1,2,3$, to the mapping equation \cref{eq:mapeq} and evaluating along $\zba = \wba = w =0$ to obtain
\[
\begin{pmatrix}
	-6 z & 0 & 0 \\
	i & -4 z & 0 \\
	0 & 2 i & -2 z \\
	0 & 0 & 3 i \\
\end{pmatrix}
\cdot
	\begin{pmatrix}
	p_w \\
	t_w \\
	q_w
\end{pmatrix}
=
i \begin{pmatrix}
	k \\
	l \\
	m \\
	n
\end{pmatrix}
\]
when $w=0$. The solvability of this overdetermined system, via Kronecker-Capelli theorem, gives
\[
	k(z,0)-6 i z l(z,0)-12 z^2 m(z,0)+8 i z^3 n(z,0) = 0,
\]
which, from the formula \cref{eq:klmnsol} for $k$, $l$, $m$, and $l$ along $\Sigma$, is equivalent to
\[
	12 i z \left(4 z \bar{f}^{(1,3)}+3 i \bar{f}^{(2,2)}\right)f\bigl|_{\Sigma} = 0,
\]
or equivalently,
\[
	 \bar{f}^{(1,3)} =0,\
	  \bar{f}^{(2,2)} = 0.
\]
Hence, from \cref{eq:klmnsol}, we find that $k=l=m=n=0$ along $\Sigma$ and consequently $p_w = t_w = q_w=0$ along $\Sigma$.

Finally, applying $\partial^j_{\zba}\partial^{2-j}_{\wba}\partial_{w}^2$, $j=0,1,2$, and evaluating along $w = \zba = \wba$, we have
\[
\begin{pmatrix}
	0 & 4 i z & 1 \\
	-z & 1 & 0 \\
	i & 0 & i \lba  \\
\end{pmatrix}
\cdot
\begin{pmatrix}
	r_{ww}\\
	s_{ww}\\
	\phi_{ww}
\end{pmatrix}
=
\begin{pmatrix}
	0\\
	0\\
	0
\end{pmatrix}
\]
and we find that 
\[
	\phi_{ww}\bigl|_{\Sigma} = 0.
\]
The proof is complete.
\end{proof}

Denote by $L$ the global $(1,0)$-vector field given by
\[
L = \frac{\partial}{\partial z} +2i \zba  \frac{\partial}{\partial w},
\]
and $\overline{L}$ its conjugate. Observe that $L \vr = 0$ on $\mathbb{C}^2$ (not just along $\heis$.)

We sum up
\begin{lem}
If $\lambda \neq 0$, then 
\begin{align*}
H(z,w) = (1 + i z^2 + w^2)(z,w,w) + O(5),
\end{align*}
and 
\begin{align*}
H(z,0)  & = \left(\frac{2 z}{1+\sqrt{1-4 i z^2}},0,0 \right),\\
H_w(z,0) & = \left(0, \frac{2}{1+\sqrt{1-4 i z^2}}, \frac{2}{1+\sqrt{1-4 i z^2}}\right),\\
H_{ww}(z,0) & = \left(\frac{8 z \sqrt{1-4 i z^2}}{(1-4 i z^2)(1+\sqrt{1-4 i z^2})^2},0,0\right). 
\end{align*}
\end{lem}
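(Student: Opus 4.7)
The plan is to assemble the three preceding lemmas, specialized to $\lambda = 1$. By \cref{thm:normalform}, in the regime $\lambda \neq 0$ we may normalize further so that $\lambda = 1$; then \cref{lem:5} gives $\mu = \nu = \sigma = 0$ together with the $3$-jet identities $f^{(3,0)} = 6i$, $\phi^{(2,1)} = g^{(2,1)} = 2i$, $\phi^{(0,3)} = g^{(0,3)} = 6$, and the proof of \cref{lem:caselambdanonzero} supplies the missing $f^{(1,2)} = 2|\lambda|^2 = 2$. All other $3$-jet coefficients are forced to vanish by the normal form \eqref{eq:normalform}.

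For the three restriction formulas I would substitute these data into the formulas already at hand. \cref{lem:first} with $\alpha = \mu = \nu = \sigma = 0$ collapses to $f(z,0) = 2z/(1+\sqrt{1-4iz^2})$ and $\phi(z,0) = 0$, while $g(z,0) = 0$ is immediate from the mapping equation. The expressions $f_w(z,0) = 0$ and the displayed $\phi_w|_{\Sigma}$ inside the proof of \cref{lem:caselambdanonzero} simplify, after plugging in $\lambda = 1$ and $\bar f^{(1,2)} = 2$, to $0$ and to $\eta(z) = 2/(1+\sqrt{1-4iz^2})$, respectively. For $g_w(z,0)$ I would invoke \cref{lem:3.3}, which gives $g_w(z,0) = 1 + i f(z,0)^2$, and then use the Segre-set identity $z - f + izf^2 = 0$ (from \cref{lem:first}) to rewrite this as $f(z,0)/z = 2/(1+\sqrt{1-4iz^2})$. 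Finally, $f_{ww}$, $\phi_{ww}$, $g_{ww}$ along $\Sigma$ are quoted directly from \cref{lem:caselambdanonzero} at $\lambda = 1$.

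The Taylor expansion $H(z,w) = (1+iz^2+w^2)(z,w,w) + O(5)$ then reduces to checking that every $4$-jet entry of $H$ at the origin vanishes, since the $1$-, $2$-, and $3$-jet entries already agree with those of the right-hand side. This is pure bookkeeping. The identity $f_w(z,0) = 0$ kills each $f^{(k,1)}$; that $f(z,0)$ is odd in $z$ kills $f^{(4,0)}$; the identities $\phi(z,0) = g(z,0) = 0$ kill every $\phi^{(k,0)}$ and $g^{(k,0)}$; $g_{ww}(z,0) = 0$ kills each $g^{(k,2)}$, and $g_w(z,0) = 1 + i f(z,0)^2$ being even in $z$ forces $g^{(3,1)} = 0$; the vanishings $\phi^{(3,1)} = \phi^{(2,2)} = f^{(0,4)} = 0$ together with $f^{(2,2)} = f^{(1,3)} = 0$ are supplied by the lemma immediately preceding the present one; and the remaining entries $\phi^{(1,3)} = \tfrac{3}{2}\bar\lambda f^{(2,2)}$, $\phi^{(0,4)} = 4\bar\lambda f^{(1,3)}$, $g^{(1,3)} = \tfrac{3}{2} f^{(2,2)}$, and $g^{(0,4)} = 4 f^{(1,3)}$ vanish with their factors. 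There is no substantive obstacle here; the only care required is to track which of the many components have already been constrained and by which ingredient.
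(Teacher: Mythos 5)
Your overall route coincides with the paper's: the paper gives no separate proof of this lemma (it is a ``summing up'' of the preceding results), and your assembly of \cref{thm:normalform} (to put $\lambda=1$), \cref{lem:first}, \cref{lem:3.3}, \cref{lem:5}, the lemma asserting $\alpha=0$, and \cref{lem:caselambdanonzero} reproduces the three restriction formulas exactly as intended; your derivation of $g_w(z,0)$ from $g_w(z,0)=1+i\bar\lambda f(z,0)^2$ and the Segre identity $z-f+i\bar\lambda z f^2=0$ is correct.

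There is, however, one genuine gap in the jet bookkeeping, namely the coefficient $f^{(0,3)}$ (the $w^3$-term of $f$), whose vanishing is needed for the stated expansion but is controlled by none of the ingredients you invoke. Your blanket justification that ``all other $3$-jet coefficients are forced to vanish by the normal form \eqref{eq:normalform}'' is not correct: the normal form only fixes the $2$-jet. The vanishings $f^{(2,1)}=g^{(1,2)}=g^{(3,0)}=\phi^{(3,0)}=0$ do follow, but from \eqref{eq:3jetfg} combined with $\mu=\nu=0$ (note also that $f^{(3,0)}=6i\bar\lambda$ and $g^{(2,1)}=2i\bar\lambda$ come from \cref{lem:3.3}, not \cref{lem:5}), whereas $f^{(0,3)}$ appears in no lemma statement at all. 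The missing line is available in the paper's material: in the proof of \cref{lem:5} the vanishing of the $z^3$-coefficient of $B(z)$ gives $\bar f^{(0,3)}=\tfrac{3}{2}\alpha\bar\mu=0$ once $\mu=0$; alternatively, the last linear system in the proof of \cref{lem:caselambdanonzero} gives $s_{ww}(z,0)=0$, and \eqref{eq:f} at $z=0$ yields $f(0,w)=w\,s(0,w)$, hence $f^{(0,3)}=3\,s_{ww}(0,0)=0$. A related (harmless) misattribution: $f^{(2,2)}=f^{(1,3)}=0$ are established inside the proof of \cref{lem:caselambdanonzero}, not in the lemma asserting $\alpha=0$, which only provides $\phi^{(3,1)}=\phi^{(2,2)}=f^{(0,4)}=0$ and the relations $g^{(0,4)}=4f^{(1,3)}$, $g^{(1,3)}=\tfrac32 f^{(2,2)}$, $\phi^{(1,3)}=\tfrac32\bar\lambda f^{(2,2)}$, $\phi^{(0,4)}=4\bar\lambda f^{(1,3)}$. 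With these corrections your argument is complete.
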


In the next step we want to determine the map along the second Segre set, parametrized by $(z,\bar z)\mapsto (z, 2 i z \bar z)$. To this end we evaluate the mapping equation and its first and second CR derivative at $\bar w = 0$. More precisely, we consider the system 
\begin{align*}
L^k\rho'(H(z,\bar w + 2 i z \bar z),\overline{H(z,w)})\bigl|_{\bar w=0}=0, \quad k\in\{0,1,2\}.
\end{align*}

Using the fact that $\phi(z,0)=g(z,0) = f_w(z,0) = \phi_{ww}(z,0) = g_{ww}(z,0) = 0$, the above system reduces to the following, where we skip evaluation along $\bar \Sigma$ in $\bar H$ and its derivatives and along the second Segre set $T=\{(z,2 i z \bar z): z,\bar z \in \CC\}$ in $H$:
\begin{align}
\label{eq:lambdaNeq0System1}
& 2 i \bar f f + i \bar f^2 \phi - g = 0,\\
\label{eq:lambdaNeq0System2}
& i z \bar \phi_{\bar w} f^2 -\bar f_{\bar z} f - (\bar f \bar f_{\bar z} - z \bar \phi_{\bar w} g) \phi + z \bar g_{\bar w} = 0,\\
\label{eq:lambdaNeq0System3}
& 2 i z \bar \phi_{\bar z \bar w} f^2 + (4 z^2 \bar f_{\bar w \bar w} - \bar f_{\bar z \bar z}) f +  ((4 z^2 \bar f_{\bar w \bar w} - \bar f_{\bar z \bar z} ) \bar f - \bar f_{\bar z}^2+ 4 i z^2 \bar g_{\bar w} \bar \phi_{\bar w})\phi \\
\nonumber
&  + 2 z \bar \phi_{\bar z \bar w}  \phi g  + 2 z \bar g_{\bar z \bar w} = 0.
\end{align}

Combining \eqref{eq:lambdaNeq0System2} and \eqref{eq:lambdaNeq0System3} and eliminating $f^2$ lead to an equation of the form:
\begin{align}
\label{eq:lambdaNeq0diff}
A + B f + (C + \bar f B) \phi = 0,
\end{align}
where
\begin{align*}
A & = 2 z(\bar g_{\bar z \bar w} \bar \phi_{\bar w} - \bar g_{\bar w}\bar \phi_{\bar z \bar w}),\\
B & = \bar \phi_{\bar w} \left(4 z^2 \bar f_{\bar w \bar w} - \bar f_{\bar z \bar z} \right) + 2 \bar f_{\bar z} \bar \phi_{\bar z \bar w},\\
C & = -\bar f_{\bar z}^2 \bar \phi_{\bar w} + 4 i z^2 \bar g_{\bar w} \bar \phi_{\bar w}^2.
\end{align*}

In can be checked that $A=0$ and $-2 i \bar z(C+\bar f B) = B$, hence \eqref{eq:lambdaNeq0diff} implies  $\phi =  2 i \bar z f$, such that \eqref{eq:lambdaNeq0System1} shows $g = \phi$. The remaining equation is given by
\begin{align*}
i z(1 + 4 i \bar z^2) f^2 - f + z = 0.
\end{align*}
The solution compatible with \eqref{eq:lambdaNeq0FFirstSegre} is
\begin{align*}
f(z, 2 i z \bar z ) = \frac{1-\sqrt{1 - 4 i z^2(1 + 4 i \bar z^2)}}{z(2 i - 8 \bar z^2)},
\end{align*}
setting $\bar z = w/(2 i z)$ gives
\begin{align*}
f(z,w) = \frac{z(1 - \sqrt{1- 4 w^2 - 4 i z^2})}{2 (w^2 + i z^2)}.
\end{align*}

If we rewrite the denominator via
\begin{align*}
2(w^2 + i z^2) = \frac{1}{2}(1-\sqrt{1-4 w^2 - 4 i z^2})(1+\sqrt{1-4 w^2 - 4 i z^2}),
\end{align*}
we obtain
\begin{align*}
f(z,w) = \frac{2 z}{1+\sqrt{1-4 w^2 - 4 i z^2}},
\end{align*}
which gives the desired map $\iota$ in \cref{thm:main1} and finishes the case $\lambda \neq 0$. 

\subsection{Case 2: \texorpdfstring{$\lambda = 0$}{lambda = 0}}
In this case, \cref{lem:first} implies that
\[
H\bigl|_{\Sigma} =  \left(z,\alpha  z^2  + 2z^3 (4 \bar{\nu}+ i \bar{\mu})+4 \bar{\sigma} z^4,0\right).
\]
We shall show that $\mu$, $\nu$, and $\sigma$ must be zero. 
\begin{lem}\label{lem:49} Assume that $\lambda = 0$, then $\sigma = 0$ and the following holomorphic identity holds:
	\begin{align}\label{eq:hol1b}
		4z^3 g  - 4z^2 w f + w^2 z \phi -  w^2 \Upsilon(z,w)\left(w (\mu +4 i \nu )-\alpha  z\right) = 0,
	\end{align}
where $\Upsilon := g\phi + i f^2$.
\end{lem}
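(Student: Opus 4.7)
The plan is to derive \eqref{eq:hol1b} by evaluating the mapping equation \eqref{eq:mapeq} along the \emph{second Segre set} $\{\bar w = 0,\ w = 2iz\bar z\}$, where the inhomogeneous term $Q(w - \bar w - 2iz\bar z)$ on the right-hand side vanishes automatically. Under the assumption $\lambda = 0$, the restriction of $H$ to the first Segre set simplifies drastically: by \cref{lem:first}, $\bar f(\bar z,0) = \bar z$ and $\bar g(\bar z,0) = 0$, while $\bar\phi(\bar z,0) = \alpha \bar z^2 + 2(4\nu - i\mu)\bar z^3 + 4\sigma\bar z^4$. Setting $\bar w = 0$ in \eqref{eq:mapeq} and grouping the $g\phi$ and $if^2$ contributions into a single factor $\Upsilon$, the mapping equation reduces to
\[
g - 2if\bar z - i\bar z^2 \phi - \Upsilon \cdot \bar\phi(\bar z,0) = Q(z,w,\bar z,0)\,(w - 2iz\bar z).
\]

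Next, I would restrict to the second Segre set $w = 2iz\bar z$, which annihilates the right-hand side, and then formally substitute $\bar z = -iw/(2z)$. Multiplying through by $4z^4$ clears all $1/z^k$ denominators and yields a polynomial identity in $(z,w)$ that differs from \eqref{eq:hol1b} only by an overall factor of $z$ and by the presence of an additional term $-\sigma w^4 \Upsilon$. The resulting identity is
\[
4z^4 g - 4z^3 w f + (\text{coef.})\, w^2 z^2 \phi + \alpha w^2 z^2 \Upsilon - (\mu + 4i\nu)\, w^3 z\, \Upsilon - \sigma w^4\, \Upsilon = 0.
\]

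To pin down $\sigma = 0$, I would specialize this polynomial identity at $z = 0$: every term except $\sigma w^4\, \Upsilon$ carries an explicit factor of $z$, so the specialization collapses to $\sigma w^4\,\Upsilon(0,w) = 0$. From the normal form \eqref{eq:normalform} with $\lambda = 0$ one computes $\Upsilon(0,w) = g(0,w)\phi(0,w) + i f(0,w)^2 = \sigma w^3 + O(w^4)$, so the leading coefficient of $\sigma w^4\,\Upsilon(0,w)$ is $\sigma^2 w^7$. This forces $\sigma = 0$. Once $\sigma = 0$ is established, every remaining term of the polynomial identity carries a factor of $z$, and division by $z$ produces \eqref{eq:hol1b}.

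The main technical hurdle is the bookkeeping while clearing denominators and keeping track of which powers of $\bar z$ contribute which powers of $w/z$. The decisive simplification is the recognition of the $\Upsilon = g\phi + if^2$ grouping from the outset: it collapses what would otherwise be two independent contributions into one factor of $\Upsilon$, and it is exactly what makes both the identification $\sigma = 0$ and the final divisibility by $z$ transparent. In contrast with the $\lambda \ne 0$ case treated earlier, the parameterization of the second Segre set here is polynomial, so no irrational terms like $\sqrt{1 - 4i\bar\lambda z^2}$ appear and the computation is comparatively light.
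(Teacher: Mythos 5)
Your proposal is correct and follows essentially the same route as the paper's proof: complexify the mapping equation \eqref{eq:mapeq}, set $\wba=0$ and insert the first--Segre-set data $\fba(\zba,0)=\zba$, $\gba(\zba,0)=0$, $\pba(\zba,0)=\alpha\zba^2+2(4\nu-i\mu)\zba^3+4\sigma\zba^4$ from \cref{lem:first}, substitute $\zba=w/(2iz)$, clear the factor $4z^4$, set $z=0$ to isolate $\sigma w^4\,\Upsilon(0,w)=0$, and divide by $z$ once $\sigma=0$ is known; your way of getting $\sigma=0$ (computing $\Upsilon(0,w)=\sigma w^3+O(w^4)$ from the normal form, so the product has leading term $\sigma^2w^7$) is the same computation the paper performs inside its dichotomy ``$\sigma=0$ or $\Upsilon(0,w)=0$''. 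Two small points of comparison: first, the paper supplies a justification you skip --- since $H$ is only smooth/formal, the substitution $\zba=w/(2iz)$ is not literally defined on power series, and one must pass from $G(z,2iz\zba)\equiv 0$ to $G(z,w)\equiv 0$, which the paper does via minimality of $\heis$ and \cite[Proposition~5.3.5]{baouendi1999real} (alternatively, compare coefficients of $\zba^k$ in $G(z,2iz\zba)=\sum_k G_k(z)(2iz)^k\zba^k$). Second, the coefficient you hedged as ``coef.'' works out to $i$: under the substitution the term $-i\zba^2\phi$ becomes $+\tfrac{i w^2}{4z^2}\phi$, so after clearing denominators and dividing by $z$ the $\phi$-term is $i\,w^2 z\,\phi$; this agrees with \eqref{eq1} later in the paper (and, for instance, $r_1$ satisfies the identity only with this $i$), so the displayed \eqref{eq:hol1b} appears to drop that factor of $i$ --- completing your bookkeeping gives the correct version.
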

\begin{proof}
Setting $w = \wba + 2i z\zba$ in the mapping equation \cref{eq:mapeq}, we rewrite it in the parametrized form:
\begin{align}\label{eq:mappara}
	(1 - \phi(z,\wba + 2i z\zba)\pba (\zba,\wba)) (g(z,\wba + 2i z\zba) - \gba(\zba,\wba)) - 2i f(z,\wba + 2i z\zba) \fba (\zba,\wba) \notag \\
	- i f(z,\wba + 2i z\zba)^2 \pba(\zba,\wba) - i \fba(\zba,\wba)^2 \phi(z,\wba + 2i z\zba) = 0,
\end{align}
satisfies for $z,\zba,\wba$ in a neighbourhood of the origin in $\mathbb{C}^3$ or as identity of formal power series.

Setting $\wba = 0$ and substituting $\fba(\zba,0) = \zba$, $\gba(\zba,0) = 0$, and $\pba(\zba,0) = \alpha\zba^2 + 2\zba^3(4\nu - i \mu) + 4\sigma \zba^4$, we have
\begin{align*}
	g(z, 2i |z|^2) - \Upsilon(z, 2i |z|^2)\left(\alpha\zba^2 + 2\zba^3(4\nu - i \mu) + 4\sigma \zba^4\right) \\
	-2i \zba f(z, 2i |z|^2) - i \zba^2 \phi(z, 2i |z|^2) = 0.
\end{align*}
From this, we obtain a holomorphic equation for $f$, $g$, and $\phi$. Formally, we substitute $\zba = w/(2i z)$, to get
\begin{align*}
	g(z,w) - \Upsilon(z,w) \left(\frac{w^2 \left(\sigma w^2+w z (\mu +4 i \nu )-\alpha  z^2\right)}{4 z^4}\right)\\
	- \left(\frac{w}{z}\right) f(z,w) + \left(\frac{w^2}{4z^2}\right) \phi(z,w) = 0.
\end{align*}
Clearing the denominator $4z^4$ we have
\begin{align}\label{eq:hol1a}
	4z^4 g -  w^2 \Upsilon(z,w)\left(\sigma w^2+w z (\mu +4 i \nu )-\alpha  z^2\right) - 4z^3 w f + w^2 z^2 \phi = 0.
\end{align}
This holds as an identity of germs of holomorphic functions at the origin if $H$ is holomorphic. If $H$ is only assumed to be smooth or formal CR map, then the above holds as formal power series. Indeed, if $G(z,w)$ denotes the left hand side, then the computation above shows that
\[
	G(z, 2i z\zba) = 0
\]
as an identity of formal power series of $z$ and $\zba$. Since the Segre map $(z, \zba) \mapsto (z, 2i z \zba)$ is of generic full rank (in other words, $\heis$ is ``minimal''), we can apply a well-known result (as stated and proved in \cite[Proposition 5.3.5]{baouendi1999real}) to conclude that $G(z,w)$ must be zero in $\mathbb{C}[[z,w]]$.

Setting $z = 0$ in \cref{eq:hol1a}, we obtain
 \[
 	\sigma w^4  \Upsilon(0, w) = 0.
 \]
Thus, either $\sigma = 0$, or $\Upsilon(0, w)=0$. If the latter statement holds, we differentiate the identity three times in $w$ and evaluate at the origin to get
\[
	\sigma = \frac{1}{2}\phi_{ww}  = \frac{1}{2}(g\phi + if^2)_{www}  = \Upsilon_{www} = 0.
\]
Hence in all cases it holds that $\sigma = 0$ and \cref{eq:hol1a} reduces to \cref{eq:hol1b}, as desired. Setting $z = 0$ in the last equation, we see that either $\mu +4 i \nu = 0$, or $\Upsilon(0,w) = 0$.
\end{proof}

\begin{lem}\label{lem:3jetcase1}
	If $\lambda = 0$, then $\sigma = 0$ and
	\begin{align}
	\phi ^{(0,3)} & = 6(6\mu  \nu -i\mu ^2+8i \nu ^2),\\
	\phi ^{(1,2)} & = -\frac{4}{3} i f^{(0,3)}+2 \alpha(i\mu -2\nu) ,\\
	\phi ^{(2,1)} & = \frac{4}{3} i g^{(0,3)}-4i f^{(1,2)},\\
	\phi ^{(3,0)} & =12 (4 \bar{\nu}+i \bar{\mu}).
	\end{align}
\end{lem}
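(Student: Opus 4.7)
The plan is to combine information already extracted from the mapping equation in two earlier lemmas. The formula for $\phi^{(3,0)}$ is immediate from \cref{lem:3.3}, which was derived without any restriction on $\lambda$ and gives $\phi^{(3,0)} = 6(8\bar{\nu} + 2 i \bar{\mu}) = 12(4\bar{\nu} + i \bar{\mu})$. Likewise, $\sigma = 0$ was already established in \cref{lem:49}. What remains is to determine the three coefficients $\phi^{(2,1)}$, $\phi^{(1,2)}$, and $\phi^{(0,3)}$.

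For these I would invoke the $4 \times 3$ linear system \eqref{eq:zw3} of \cref{lem:cond3} for the auxiliary holomorphic functions $p, t, q$ restricted to the first Segre set, and specialize to $\lambda = \sigma = 0$. From \cref{lem:first} and \eqref{eq:phiz0} one has $f|_{\Sigma} = z$ and $\phi|_{\Sigma} = \alpha z^2 + 2(4\bar{\nu} + i\bar{\mu}) z^3$ in this case. The top three rows of the system then become upper triangular (in $p, t, q$) and can be solved sequentially: the first row gives $p(z,0) = -2i(\mu + 4i\nu)\, z$; the second row produces $t(z,0) = -2i\nu + \tfrac{1}{4}\bar{\phi}^{(2,1)} z$; and the third row yields $q(z,0)$ as a polynomial of degree two in $z$ whose coefficients involve $\bar{f}^{(1,2)}$, $\bar{\phi}^{(1,2)}$, $\bar{\phi}^{(2,1)}$, together with the 2-jet data.

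Substituting this expression for $q(z,0)$ into the fourth row $3 i q = i \bar{g}^{(0,3)} - 2 \bar{f}^{(0,3)} f - \bar{\phi}^{(0,3)} f^2$ (using $\bar{\sigma} = 0$) converts the over-determination of the system---which is precisely the determinantal condition \eqref{eq:com1}---into a polynomial identity in $z$ of degree two. Matching the coefficients of $z^0$, $z^1$, and $z^2$ yields three scalar equations, each linear in exactly one of the unknowns $\bar{\phi}^{(2,1)}$, $\bar{\phi}^{(1,2)}$, $\bar{\phi}^{(0,3)}$, respectively. Solving them and taking complex conjugates (noting that $\alpha \in \RR$ while $\mu, \nu \in \CC$) produces the three displayed formulas. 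The computation is routine linear algebra; the only care needed is in tracking complex conjugations and in correctly expanding $(i\bar{\mu} + 2\bar{\nu})(4\bar{\nu} + i\bar{\mu})$ to recover the expression $6\mu\nu - i\mu^2 + 8i\nu^2$ in the $\phi^{(0,3)}$ identity. I anticipate no substantive obstacle.
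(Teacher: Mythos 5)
Your proposal is correct and takes essentially the same route as the paper: the three unknown coefficients are extracted from the solvability of the overdetermined system \eqref{eq:zw3} along $\Sigma$ with $f|_{\Sigma}=z$ and $\phi|_{\Sigma}=\alpha z^2+2(4\bar{\nu}+i\bar{\mu})z^3$, which is exactly the determinantal condition \eqref{eq:com1} that the paper expands and equates in powers of $z$ --- your back-substitution of the (lower-, not upper-)triangular first three rows into the fourth row is the same computation organized differently, and it does reproduce the stated formulas. The only cosmetic deviations are that you import $\sigma=0$ from \cref{lem:49} (legitimate, since that lemma precedes this one and is independent of it) whereas the paper re-derives it from the degree-seven coefficient $192i\bar{\sigma}^2z^7$ of the determinant, and that $\phi^{(3,0)}$ is indeed immediate from \cref{lem:3.3}.
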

\begin{proof}
By Lemma \ref{lem:cond3} and the assumption $\lambda=0$ we have
\begin{equation}\label{eq:com1a}
	\Delta:=\det 
	\begin{pmatrix}
		-z & 0 & 0 & 2 i (\mu +4 i \nu ) f^2 \\
		i & -4 z & 0 & 2 (\mu +8 i \nu ) f-\bar{\phi}^{(2,1)} f^2  \\
		0 & 2i & -2z & 4 \nu-2\bar{f}^{(1,2)} f-\bar{\phi}^{(1,2)} f^2-2(i \bar{\mu}+2\bar{\nu}) \phi\\
		0 & 0 & 3 i &i \bar{g}^{(0,3)}-2 \bar{f}^{(0,3)} f-\bar{\phi}^{(0,3)} f^2-6 i \bar{\sigma} \phi
	\end{pmatrix}
	=0,
\end{equation}
along $\Sigma$. On the other hand, when $\lambda = 0$ we have
\[
H\bigl|_{\Sigma} =  \left(z,\alpha  z^2  + 2z^3 (4 \bar{\nu}+ i \bar{\mu})+4\bar{\sigma} z^4,0\right).
\]
If $\sigma\ne 0$, then $\phi \bigl|_{\Sigma} $ has degree exactly equal 4. Observe that $f\bigl|_{\Sigma}  = z$ and hence the last column in the matrix on the right of \cref{eq:com1a} has degree at most 4 in $z$. It is immediately that $\Delta$ has degree exactly 7. In fact, expanding the determinant dropping all terms of degree less than 7 we find that the degree 7 term in $\Delta$ is $192i\bar{\sigma}^2 z^7$. Thus, the vanishing of $\Delta$ along $\Sigma$ implies that $\sigma = 0$. 

With $\sigma = 0$, substituting $f(z,0) = z$ and $\phi(z,0) = \alpha  z^2  + 2z^3 (4 \bar{\nu}+ i \bar{\mu})$ into \cref{eq:com1a} and equating coefficients of $z$ to be zero, we can easily find the formulas as above.
\end{proof}

\begin{lem}\label{lem:310}
	If $\lambda = 0$, then $\mu = -3i \nu$ or $\mu = -4i \nu$.
\end{lem}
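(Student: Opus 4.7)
The strategy is to exploit the holomorphic identity established in \cref{lem:49}, namely
\[
4z^3 g - 4z^2 w f + w^2 z \phi - w^2 \Upsilon(z,w)\bigl(w(\mu + 4i\nu) - \alpha z\bigr) = 0,
\]
which holds as an identity of formal power series in $(z,w)$, together with the 3-jet relations from \cref{lem:3jetcase1} and the already-known values of $f\bigl|_{\Sigma}$, $\phi\bigl|_{\Sigma}$, and $g\bigl|_{\Sigma}$. The dichotomy $\mu = -3i\nu$ or $\mu = -4i\nu$ is suggestive: the second alternative is precisely the condition that makes the coefficient $w(\mu+4i\nu)$ in the identity above vanish, so that branch is natural. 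My expectation is therefore that the proof splits into recognizing the case $\mu + 4i\nu = 0$ directly and deriving the relation $\mu + 3i\nu = 0$ by a further jet computation in the complementary case.

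First I would substitute the partial normal form (with $\lambda = 0$, $\sigma = 0$) and the 3-jet identities of $\phi$ into the holomorphic identity and examine its low-weight Taylor coefficients. Because the left-hand side vanishes at weights $\leqslant 3$ automatically, the nontrivial obstructions begin appearing at weights $4$ and $5$, and they will involve the as-yet-undetermined 4-jet coefficients of $H$ such as $f^{(0,3)}$, $g^{(0,3)}$, $f^{(1,2)}$, and $\phi^{(2,2)}$. Next, in parallel with the $\lambda \neq 0$ case, I would apply the operators $\partial_{\bar z}^{\,j}\partial_{\bar w}^{\,k-j}$ with $k \in \{4,5\}$ to the mapping equation \eqref{eq:mapeq} and evaluate along $w = \bar z = \bar w = 0$. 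Each order yields an over-determined linear system for the auxiliary holomorphic functions $p, t, q, k, l, m, n$ (as introduced in \eqref{eq:rsdef} and \eqref{eq:klmndef}), and solvability via Kronecker--Capelli forces algebraic compatibility conditions on the 4- and 5-jets.

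Combining these compatibility conditions with the weight $4$ and weight $5$ Taylor coefficients of the holomorphic identity should allow the elimination of $\alpha$, $f^{(0,3)}$, $g^{(0,3)}$, $f^{(1,2)}$, $\phi^{(2,2)}$, and the other non-pinned jet entries, leaving a single polynomial relation purely in $\mu$ and $\nu$. The proposed endpoint is a factorization of the form
\[
(\mu + 3i\nu)(\mu + 4i\nu) \cdot E(\mu,\bar\mu,\nu,\bar\nu) = 0
\]
with $E$ a quantity that can be shown to be non-vanishing (for instance by specializing it or by recognizing it as manifestly positive), from which the dichotomy follows.

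\textbf{Main obstacle.} The bookkeeping is the hard part. The systems at weights $4$ and $5$ have many unknown jet coefficients, and distinguishing the genuine constraints on $(\mu,\nu)$ from incidental ones demands careful elimination. As the authors point out in the introduction, computer algebra is convenient here for collecting terms and computing determinants, though each individual step remains elementary. A secondary difficulty is choosing the correct weighted order at which to stop: one must go high enough that both factors $\mu+3i\nu$ and $\mu+4i\nu$ are visible---which the authors' remark on weight $16$ suggests is nontrivial---but not unnecessarily far. The case distinction according to whether $\mu + 4i\nu$ vanishes or not is likely the cleanest organization, since in the former case the identity \eqref{eq:hol1b} simplifies drastically and in the latter case one can divide through by $\mu + 4i\nu$ at an appropriate point of the argument.
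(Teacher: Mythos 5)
Your plan does land on the same engine the paper uses for this lemma: apply $\partial_{\bar w}^{\,j}\partial_{\bar z}^{\,4-j}$, $j=1,\dots,4$, to the mapping equation \eqref{eq:mapeq}, evaluate at $w=\bar z=\bar w=0$, and impose the Kronecker--Capelli solvability condition on the resulting overdetermined system for $l,m,n$ along $\Sigma$ (after noting $k\equiv 0$, which comes from $\phi^{(4,0)}=96\bar\sigma=0$). But the proposal stops exactly where the content of the lemma begins, and its anticipated endpoint is not what actually happens. In the paper no elimination of $\alpha$, $f^{(1,2)}$, $f^{(0,3)}$, $g^{(0,3)}$, $\phi^{(2,2)}$, etc.\ is performed: the solvability determinant is a polynomial of degree at most $6$ in $z$ along $\Sigma$, and the dichotomy drops out of its single top coefficient, which equals $384\,(\bar\mu-4i\bar\nu)\bigl(\bar\phi^{(0,3)}-6i\bar\nu^{2}\bigr)$; by \cref{lem:3jetcase1} the second factor is $6i(\bar\mu-3i\bar\nu)^{2}$, so this one coefficient already forces $\mu=-4i\nu$ or $\mu=-3i\nu$. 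None of the undetermined $4$-jet entries enters that coefficient, and neither the weight-$5$ systems nor the Taylor coefficients of \eqref{eq:hol1b} are needed here (the very high weighted orders, up to $16$, are needed later to pin the $4$-jet, not for this dichotomy).

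The genuine gap is that your argument never establishes -- and does not correctly predict -- the algebraic identity that proves the statement. Asserting that combining the compatibility conditions should, after elimination, leave a polynomial relation purely in $\mu$ and $\nu$ is precisely the claim to be proved: without carrying out the computation you cannot rule out that the weight-$4$ and weight-$5$ conditions constrain $\mu,\nu$ only jointly with the other unknown jets, which is in fact what all the lower-degree coefficients of the same determinant do (they are later used to solve for $\phi^{(3,1)},\phi^{(2,2)},\dots$, not to constrain $\mu,\nu$ alone). Moreover, the proposed factorization $(\mu+3i\nu)(\mu+4i\nu)\,E=0$ with a cofactor $E$ to be shown non-vanishing has no correct instantiation: the actual relation is, up to a nonzero constant, $(\bar\mu-4i\bar\nu)(\bar\mu-3i\bar\nu)^{2}=0$, so the would-be cofactor is another copy of $\bar\mu-3i\bar\nu$ and vanishes on one of the branches; the step of proving $E\neq 0$ would fail as stated. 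Until the determinant (or an equivalent relation) is actually computed and its leading coefficient factored, the proof of the lemma is incomplete.
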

\begin{proof}
Applying the differential operator $\partial_{\zba}^4$ to the mapping equation \cref{eq:mapeq}, evaluating at $\zba = \wba = 0$, and using $ \phi^{(4,0)} = 96\bar{\sigma}= 0$, we find that $k(z,w)=0$, where $k$ is the auxiliary function defined in \cref{eq:klmndef}. Next, applying the differential operator $\partial_{\wba}^{j}\partial_{\zba}^{4-j}$, $j=1,2,3,4$, to the mapping equation \cref{eq:mapeq} and evaluating at $w= \zba=\wba = 0$, we obtain an overdetermined system of 4 linear equations of 3 unknowns that must be satisfied by $l, m, n$ restricted to the first Segre set $\Sigma = \{w=0\}$. Explicitly, the Kroneker-Capelli theorem implies that, when $w=0$,
\[
\det
\begin{pmatrix}
	-6 z & 0 & 0 & -z^2\bar{\phi} ^{(3,1)}-6 \mu  \phi \\
	2 i & -4 z & 0 & (\alpha ^2-2 i \bar{\phi} ^{(2,1)}-4 \bar{f} ^{(1,2)})\phi-2 z \bar{f} ^{(2,2)}-z^2\bar{\phi} ^{(2,2)} \\
	0 & 3 i & -2 z & (6i \alpha  \bar{\nu}-3i \bar{\phi} ^{(1,2)}-2\bar{f} ^{(0,3)}) \phi -2z \bar{f} ^{(1,3)}+i\bar{g} ^{(1,3)}-z^2 \bar{\phi} ^{(1,3)} \\
	0 & 0 & 4 i & -4\phi(6\bar{\nu}^2-i \bar{\phi} ^{(0,3)}) -2 z \bar{f} ^{(0,4)}+i \bar{g} ^{(0,4)}-z^2 \bar{\phi} ^{(0,4)}\\
\end{pmatrix}
	=0.
\]
Denote this determinant by $\Delta$.
Observe that the entries in the last column of the matrix are polynomials in $z$ of degree at most 3 and hence $\Delta$ has degree at most 6. Moreover, all the terms in the determinant expansion for $\Delta$ has degree at most 5 except the product of the diagonal entries. Thus, collecting the term of degree 6 in $z$ we have
\[
	\Delta =  384 z^6 (\bar{\mu}-4 i \bar{\nu}) \left(\bar{\phi} ^{(0,3)}-6 i \bar{\nu}^2\right) + \cdots
\]
where the dots represent terms of degree 5 or less in $z$. Thus, the vanishing of $\Delta$ implies that either $\mu = -4i \nu$ or $\bar{\phi} ^{(0,3)}-6 i \bar{\nu}^2 = 0$. In the latter case, we use \cref{lem:3jetcase1} to conclude that $\mu = - 3i\nu$, as desired.
\end{proof}
\begin{lem}\label{lem:311}
	If $\lambda = 0$ and $\mu = -4i\nu$, then $\mu = \nu = 0$.
\end{lem}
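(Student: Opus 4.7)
The approach I would take is to exploit the holomorphic identity \eqref{eq:hol1b} from \cref{lem:49} directly, rather than differentiating the mapping equation further. Under the hypothesis $\mu + 4i\nu = 0$, the factor $w(\mu+4i\nu)-\alpha z$ in \eqref{eq:hol1b} collapses to $-\alpha z$, and the identity becomes
$$
4z^3 g - 4z^2 w\, f + w^2 z\,\phi + \alpha z w^2 \,\Upsilon(z,w) = 0.
$$
Every monomial carries a factor of $z$, and since $\mathbb{C}[[z,w]]$ is an integral domain, one may cancel $z$ to obtain the tighter identity
$$
4 z^2 g - 4 z w\, f + w^2 \phi + \alpha w^2\, \Upsilon(z,w) = 0.
$$
The key observation is that after this hidden cancellation the equation is so constrained that a single low-order Taylor coefficient already detects $\nu$.

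Concretely, I would extract the coefficient of the monomial $z^1 w^3$ in the displayed identity above, using the partial normal form \eqref{eq:normalform} together with $\lambda=\sigma=0$ and $\mu=-4i\nu$. The term $4z^2 g$ contributes nothing (its $z$-valuation is $2$). The term $-4zw\,f$ contributes $-4\,[z^0 w^2]f = -4\nu$ since $f = z + \tfrac{i\alpha}{2}zw + \nu w^2 + O(3)$. The term $w^2\phi$ contributes $[z^1 w^1]\phi = \mu = -4i\nu$. For the last term one needs $[z^1 w^1]\Upsilon = [z^1 w^1](g\phi+if^2)$, and this vanishes: $g\phi$ has order at least $3$ (because $g = w + O(3)$ and $\phi$ has no first-order part when $\lambda = 0$), while $f^2$ has no $zw$ monomial (because $f$ has no $z^2$ term in its $2$-jet). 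Summing the four contributions, the identity forces $-4(1+i)\nu = 0$, hence $\nu = 0$, and then $\mu = -4i\nu = 0$.

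The only subtlety requiring a moment of care is that \eqref{eq:hol1b} must be read as a formal power series identity even when $H$ is merely smooth, but this is already supplied by the minimality/Segre-map argument in the proof of \cref{lem:49} (via \cite[Proposition~5.3.5]{baouendi1999real}). In contrast to \cref{lem:310}, whose proof required assembling the $5\times 5$ Kronecker--Capelli determinant for the fourth-order jets, no further high-order analysis is needed here: the hypothesis $\mu+4i\nu=0$ injects exactly the extra divisibility by $z$ that reduces the full problem to a single low-order coefficient identity.
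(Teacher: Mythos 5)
There is a genuine gap: your argument rests on the printed form of \eqref{eq:hol1b}, which contains a sign/factor typo, and once that is corrected your single coefficient identity carries no information beyond the hypothesis. Rederiving \eqref{eq:hol1b} from \eqref{eq:mappara}: with $\wba=0$ one has $g-\pba(\zba,0)\,\Upsilon-2i\zba f-i\zba^2\phi=0$, and substituting $\zba=w/(2iz)$ gives $\zba^2=-w^2/(4z^2)$, so the $\phi$-term is $-i\zba^2\phi=+\tfrac{i w^2}{4z^2}\phi$; after clearing $4z^4$ the identity reads $4z^3g-4z^2wf+\mathbf{i}\,w^2z\,\phi-w^2\Upsilon\left(w(\mu+4i\nu)-\alpha z\right)=0$, with an $i$ in front of the $\phi$-term. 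This is consistent with \eqref{eq1} in \cref{lem:holeq2} (which has $i w^2\phi$ after cancelling $z$), and it can be confirmed on the classified map $r_1$: for $r_1$ one computes $\Upsilon=iz^2/(1-w^2)$ and the identity with the $i$ holds identically, whereas the version without the $i$ (the one you used) fails. With the corrected identity and $\mu+4i\nu=0$, your coefficient of $zw^3$ in $4z^2g-4zwf+iw^2\phi+\alpha w^2\Upsilon=0$ becomes $-4\nu+i\mu=0$, which is \emph{exactly} the hypothesis $\mu=-4i\nu$ and yields no conclusion; your $-4(1+i)\nu=0$ is an artifact of the missing $i$.

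More structurally, no argument using only \eqref{eq:hol1b} can prove this lemma: that identity is the mapping equation restricted to $\wba=0$, i.e.\ it only encodes data along the first Segre variety, and (as the paper's own \cref{ex:holEqUpTo14} illustrates in a related situation) satisfying such holomorphic functional equations is far from sufficient for being a genuine map. The vanishing of $\nu$ under $\mu=-4i\nu$ is detected only by transversal (higher $\wba$-order) information; this is why the paper's proof goes through the fourth-order derived systems for $l,m,n$ and then for $p_w,t_w,q_w$, extracting the relation from the degree-$5$ coefficient $-96\bar\nu^3z^5$ of a Kronecker--Capelli determinant. So the claimed ``single low-order coefficient'' shortcut does not work, and the lemma genuinely requires the higher-order analysis.
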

\begin{proof}
Since $\mu = -4i \nu$, we have $\phi(z,0) = \alpha z^2$, and $\bar{\phi}^{(0,3)} = 0$, $\bar{\phi}^{(1,2)} = -\frac{4}{3}i \bar{f}^{(0,3)} + 4 \alpha \bar{\nu}$, $\bar{\phi}^{(2,1)} = -\frac{4}{3}i \bar{g}^{(0,3)} + 4i \bar{f}^{(1,2)}$. Thus
\[
\det
\begin{pmatrix}
	-6 z & 0 & 0 & (24 i \alpha \nu -\bar{\phi} ^{(3,1)})z^2 \\
	2 i & -4 z & 0 & (\alpha ^3- \frac{8}{3}\alpha\bar{g}^{(0,3)} + 4 \alpha \bar{f}^{(1,2)}-\bar{\phi} ^{(2,2)})z^2-2 z \bar{f} ^{(2,2)}\\
	0 & 3 i & -2 z & (2\alpha \bar{f} ^{(0,3)}-6i \alpha ^2\bar{\nu}- \bar{\phi} ^{(1,3)})z^2 -2z \bar{f} ^{(1,3)}+i\bar{g} ^{(1,3)} \\
	0 & 0 & 4 i & -(24\alpha\bar{\nu}^2+\bar{\phi} ^{(0,4)}) z^2 -2 z \bar{f} ^{(0,4)}+i \bar{g} ^{(0,4)} \\
\end{pmatrix}
=0.
\]
Expanding the determinant on the left, we obtain a degree 5 polynomial in $z$. Equating the terms of degree 5, we obtain
\[
	\bar{\phi} ^{(0,4)}=- 	24\alpha\bar{\nu}^2.
\]
Similarly, equating coefficients of degree 4 terms, we have
\[
	\bar{\phi} ^{(1,3)}= 2 \alpha  \bar{f} ^{(0,3)}+i \left(\bar{f} ^{(0,4)}-6 \alpha ^2 \bar{\nu}\right).
\]
Equating the terms of degree 3 yields
\[
	\bar{\phi} ^{(2,2)}= 4 \alpha  \bar{f} ^{(1,2)}-\frac{2}{3} i \left(\bar{g} ^{(0,4)}-4 \bar{f} ^{(1,3)}\right)-\frac{8}{3} \alpha  \bar{g} ^{(0,3)}+\alpha ^3.
\]
Equating the terms of second degree
\[
	\bar{\phi} ^{(3,1)}= 6 i \bar{f} ^{(2,2)}-4 i \bar{g} ^{(1,3)}-6 \alpha  \mu.
\]
With these conditions satisfied, the system is solvable with unique solution which can be found by solving the system
\[
	\begin{pmatrix}
		2 i & -4 z & 0 \\
		0 & 3 i & -2 z \\
		0 & 0 & 4 i \\
	\end{pmatrix}
	\cdot
	\begin{pmatrix}
		l \\
		m \\
		n \\
	\end{pmatrix}
	=
	\begin{pmatrix}
	-2 z \bar{f} ^{(2,2)}+\dfrac{2}{3} i z^2 \left(\bar{g} ^{(0,4)}-4 \bar{f} ^{(1,3)}\right) \\
	-i z^2 \bar{f} ^{(0,4)}-2 z \bar{f} ^{(1,3)}+i \bar{g} ^{(1,3)} \\
	-2 z \bar{f} ^{(0,4)}+i \bar{g} ^{(0,4)}
	\end{pmatrix}
\]
We observe that, when restricted to $\Sigma$, $n$ is linear, $m$ is at most quadratic, and $l$ is at most cubic in $z$ (Actually, the unique solution $l, m$, and $n$ along $\Sigma$ are linear in $z$.)

Applying $\partial_w\partial_{\zba}^3$ to the mapping equation and evaluating at $w = \zba=\wba = 0$, using $k(z,0)=0$ and $\mu+4i \nu=0$,   we find that 
\[
	p_w\bigl|_{\Sigma} = 0.
\]
Next, applying $\partial_w\partial_{\zba}^j\partial_{\wba}^{3-j}$, $j=0,1,2$, to the mapping equation \cref{eq:mapeq} and evaluating at $w = \zba=\wba = 0$ we find that

\[
	\begin{pmatrix}
		 -4 z & 0 \\
		  2 i & -2 z \\
		  0 & 3 i \\
	\end{pmatrix}
	\cdot
	\begin{pmatrix}
		t_w \\
		q_w
	\end{pmatrix}
	=
	\begin{pmatrix}
			i l-4 i \bar{\nu} z^3 \bar{\phi} ^{(2,1)}-16 |\nu|^2 z^2-4 \alpha  \nu  z \\
			-4 i \bar{\nu} z^2 \bar{f} ^{(1,2)}-i \alpha  z \bar{f} ^{(1,2)}+i m-4 i \bar{\nu} z^3 \bar{\phi} ^{(1,2)}+4 \bar{\nu} \phi_w \\
			-5 i \bar{\nu} z^2 \bar{f} ^{(0,3)}-i \alpha  z \bar{f} ^{(0,3)}+i n+18 \bar{\nu}^3 z^3 
	\end{pmatrix}.
\]
Again, by the Kroneker-Capelli theorem, this overdetermined system is solvable iff
\[
	\Delta:=
	\det \begin{pmatrix}
		-4 z & 0 & i l-4 i \bar{\nu} z^3 \bar{\phi} ^{(2,1)}-16 |\nu|^2 z^2-4 \alpha  \nu  z\\
		2 i & -2 z & -4 i \bar{\nu} z^2 \bar{f} ^{(1,2)}-i \alpha  z \bar{f} ^{(1,2)}+i m-4 i \bar{\nu} z^3 \bar{\phi} ^{(1,2)}+4 \bar{\nu} \phi_w \\
		0 & 3 i & -5 i \bar{\nu} z^2 \bar{f} ^{(0,3)}-i \alpha  z \bar{f} ^{(0,3)}+i n+18 \bar{\nu}^3 z^3 
	\end{pmatrix}
	=0.
\]
On the other hand, applying $\partial_w\partial_{\zba}^j\partial_{\wba}^{2-j}$, $j=0,1,2$, and evaluating at $w = \zba=\wba = 0$, we obtain a system of 3 linear equations for $r_w, s_w$, and $\phi_w$ similarly to the first case. In fact,
\[
\phi_w\bigl|_{\Sigma} = -16 i \bar{\nu}^2 z^4 + \cdots 
\]
where the dots are the terms which are at most cubic in $z$. 
Plugging this into the determinant above and expanding it, the resulting is a polynomial of degree at most 5. Collecting the term of degree 5 in $z$ from the term $18 \bar{\nu}^3 z^3$ of the $(3,3)$-entry and the term $4\bar{\nu}\phi_w$ in the $(2,3)$-entry,  we have
\[
	\Delta = -96 \bar{\nu}^3 z^5 +\cdots
\]
where the dots represent terms of degree 4 or less in $z$. Thus, the vanishing of $\Delta$ implies that $\nu = 0$, as desired.
\end{proof}
Thus, from \cref{lem:310,lem:311} we have $\mu = -3i\nu$ and we use this from now on.
\begin{lem}
	If $\lambda = 0$, then $\mu = -3i \nu$. Moreover,
	\begin{align}
		f_{w}\bigl|_{\Sigma}
		&=
		\frac{5}{2} i \bar{\nu} z^2+\frac{i \alpha  z}{2},\\
		\phi_{w}\bigl|_{\Sigma}
		&=\frac{1}{3} i z^2 \left(2 \bar{f} ^{(1,2)}-4f ^{(1,2)}+ \alpha ^2\right)+\frac{1}{3} i z^3 \left(18 \alpha  \bar{\nu}-4 i \bar{f} ^{(0,3)}\right)+6 i \bar{\nu}^2 z^4-3 i \nu  z,\\
		g_{w}\bigl|_{\Sigma} &= 1,
	\end{align}
and
	\begin{align}
	f_{ww}\bigl|_{\Sigma}
	& =
	2 \nu+f^{(1,2)}z+z^2 \left(\frac{4}{3} i \bar{f}^{(0,3)}-\frac{13 \alpha  \bar{\nu}}{2}\right) -15 \bar{\nu}^2 z^3,\\
	g_{ww}\bigl|_{\Sigma}
	& =
	4i\bar{\nu} z,\\
	g_{www}\bigl|_{\Sigma}
	& = \bar{g} ^{(0,3)}+ 2 z \left(i \bar{f} ^{(0,3)}-3 \alpha  \bar{\nu}\right)-36 \bar{\nu}^2 z^2.
\end{align}
\end{lem}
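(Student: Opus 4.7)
The identity $\mu = -3i\nu$ has already been established in \cref{lem:310,lem:311}, and $\sigma = 0$ in \cref{lem:49}; what remains is to derive the six derivative formulas along the first Segre set $\Sigma$. The overall scheme is to extract successive $w$-derivatives of $f, \phi, g$ along $\Sigma$ by applying $\partial_\zba^j \partial_\wba^{k-j} \partial_w^\ell$ to the mapping equation \cref{eq:mapeq}, evaluating at $\zba = \wba = 0$, and then restricting to $w = 0$.

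For the first $w$-derivatives, specializing \cref{lem:3.3} to $\lambda = 0$ gives immediately $g_w\bigl|_\Sigma = 1$ and $f_w\bigl|_\Sigma = z\,r(z,0) + s(z,0)$. The values of $r$ and $s$ along $\Sigma$ follow from the $3\times 3$ linear system \cref{eq:rsphi} after substituting $\lambda = 0$, $\sigma = 0$, $\mu = -3i\nu$ and $f\bigl|_\Sigma = z$: solving yields $r\bigl|_\Sigma = 2i\bar{\nu} z$ and $s\bigl|_\Sigma = \tfrac{i\alpha}{2} z + \tfrac{i\bar{\nu}}{2} z^2$, which combine to give the stated $f_w\bigl|_\Sigma$. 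To obtain $\phi_w\bigl|_\Sigma$, I apply the operators $\partial_\zba^j \partial_\wba^{2-j}\partial_w$ for $j=0,1,2$ to the mapping equation and restrict to $\Sigma$; this produces a second $3\times 3$ linear system with the same coefficient matrix as \cref{eq:rsphi} whose right-hand side involves $p, t, q$ along $\Sigma$. These are in turn obtained from the system \cref{eq:zw3} using the 3-jet formulas of \cref{lem:3jetcase1} (under $\mu = -3i\nu$); solving gives the stated quartic expression for $\phi_w\bigl|_\Sigma$.

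The second- and third-order $w$-derivatives follow at once from closed-form expressions for $f$ and $g$ in terms of $r$ and $s$. Indeed, with $\lambda = 0$ the identity \cref{e21} specializes to $f(z,w) = z + w\bigl(z\,r(z,w) + s(z,w)\bigr)$, so $f_{ww}\bigl|_\Sigma = 2\bigl(z\,r_w(z,0) + s_w(z,0)\bigr)$, which produces the stated cubic after substituting the values found in the previous step. Analogously, \cref{eq:Qsol} with $\lambda = 0$ reads $g(z,w) = w + w^2 r(z,w)$, from which $g_{ww}\bigl|_\Sigma = 2\,r(z,0) = 4i\bar{\nu} z$ and $g_{www}\bigl|_\Sigma = 6\,r_w(z,0)$ is the desired quadratic polynomial. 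The principal obstacle is not conceptual but algebraic bookkeeping: the right-hand sides of the two $3\times 3$ systems carry numerous higher-order jets such as $\phi^{(3,1)}, \phi^{(2,2)}, g^{(1,3)}, g^{(0,4)}, f^{(0,4)}$ which must be eliminated using the relations from \cref{lem:3jetcase1,lem:310,lem:311}; a computer algebra system streamlines the expansion, but each step is elementary.
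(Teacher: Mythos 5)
Your plan follows the paper's own route: with $\lambda=0$ you use the closed forms $f = z + w(zr+s)$ and $g = w(1+wr)$, solve \cref{eq:rsphi} for $r,s$ along $\Sigma$, solve \cref{eq:zw3} for $p,t,q$, and then apply $\partial_{\bar z}^j\partial_{\bar w}^{2-j}\partial_w$ to \cref{eq:mapeq} to get $(r_w,s_w,\phi_w)$ along $\Sigma$; the formulas for $g_w$, $f_w$, $g_{ww}=2r(z,0)$ and $g_{www}=6r_w(z,0)$ then do come out as you describe. The gap is in your claim that ``solving gives the stated quartic expression for $\phi_w|_{\Sigma}$'' (and likewise the stated cubic for $f_{ww}|_{\Sigma}$) after eliminating jets via \cref{lem:3jetcase1,lem:310,lem:311}. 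What the linear algebra actually yields is $\phi_w|_{\Sigma}$ with $z^2$-coefficient $\tfrac{i}{3}\left(6\bar{f}^{(1,2)}-4\bar{g}^{(0,3)}+3\alpha^2\right)$ and $f_{ww}|_{\Sigma}$ with $z$-coefficient $\bar{g}^{(0,3)}-\bar{f}^{(1,2)}-\tfrac{\alpha^2}{2}$, i.e.\ expressions still involving $\bar{g}^{(0,3)}$, whereas the lemma's formulas involve only $f^{(1,2)}$, $\bar{f}^{(1,2)}$ and $\alpha$. Passing from one to the other requires the additional ``reflection identity'' $g^{(0,3)}=\bar{g}^{(0,3)}=f^{(1,2)}+\bar{f}^{(1,2)}+\tfrac{\alpha^2}{2}$ (in particular the reality of $g^{(0,3)}$), which is not among the relations you invoke: none of \cref{lem:49,lem:3jetcase1,lem:310,lem:311} ties $g^{(0,3)}$ to $f^{(1,2)}$. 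It is produced inside this very proof, by reading $\phi^{(2,1)}$ off the computed $\phi_w|_{\Sigma}$ (twice its $z^2$-coefficient) and equating it with $\phi^{(2,1)}=\tfrac{4}{3}i g^{(0,3)}-4i f^{(1,2)}$ from \cref{lem:3jetcase1} (equivalently, by noting that the $z$-coefficient of $f_{ww}(z,0)$ is tautologically $f^{(1,2)}$ and comparing with the computed value). Without this step your argument stops at formulas that cannot be matched to the statement, and you also miss the identity for $g^{(0,3)}$ that the subsequent lemmas rely on.

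Two smaller inaccuracies: the jets you propose to eliminate ($\phi^{(3,1)},\phi^{(2,2)},g^{(1,3)},g^{(0,4)},f^{(0,4)}$) do not occur in the systems relevant here --- they are fourth-order jets entering only in the later lemmas, and \cref{lem:3jetcase1,lem:310,lem:311} give no relations for them; what actually appears on the right-hand sides are the third-order jets $\bar{\phi}^{(2,1)},\bar{\phi}^{(1,2)},\bar{\phi}^{(0,3)},\bar{f}^{(1,2)},\bar{f}^{(0,3)},\bar{g}^{(0,3)}$. Also, the right-hand side of the $w$-differentiated $3\times 3$ system is not just $\tfrac{i}{2}(p,t,q)$: it carries correction terms in $f_w$, $g_w$ and $\phi$ (with coefficients in $\alpha$ and $\bar{\nu}$) coming from the product terms in \cref{eq:mapeq}; this is bookkeeping, but it must be kept.
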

\begin{proof} Since $g(z,w)=w (w r(z,w)+1)$, we have $g_w\bigl|_{\Sigma}  = 1$. From \cref{eq:rsphi},
\[
	r(z,0) = 2i\bar{\nu} z, \quad s(z,0) = \frac{i}{2} \alpha z + \frac{1}{2} i \bar{\nu} z^2.
\]
Therefore,
\[
	f_w\bigl|_{\Sigma} 
	=
	zr + s\bigl|_{\Sigma} 
	= \frac{5}{2} i \bar{\nu} z^2+\frac{i \alpha  z}{2},
\]
as desired.

Next, we solve the following system for $p$, $t$ and $q$ along $\Sigma$:
\begin{align*}\label{eq:zw3b}
\begin{pmatrix}
	i & -4 z & 0 \\
	0 & 2 i & -2z \\
	0 & 0 & 3 i
\end{pmatrix}
\cdot
\begin{pmatrix}
	p \\
	t \\
	q
\end{pmatrix}
& = 
\begin{pmatrix}
	10 i \nu z + \dfrac{4}{3}i(3 \bar{f}^{(1,2)} - \bar{g}^{(0,3)}) z^2  \\
	4 \nu-2\bar{f}^{(1,2)} z- \dfrac{4i}{3}\bar{f}^{(0,3)} z^2 + 4 \bar{\nu}^2 z^3  \\
	i \bar{g}^{(0,3)}-2 \bar{f}^{(0,3)} z-6i\bar{\nu}^2 z^2
\end{pmatrix}
\end{align*}
We find that 
\begin{align*}
\begin{pmatrix}
p \\
t \\
q
\end{pmatrix}
& =
-\frac{1}{6}\begin{pmatrix}
	6 i & 12 z & -8 i z^2 \\
	0 & 3 i & 2 z \\
	0 & 0 & 2 i \\
\end{pmatrix}
\cdot
	\begin{pmatrix}
	10i \nu z + \dfrac{4}{3}i(3 \bar{f}^{(1,2)} - \bar{g}^{(0,3)}) z^2  \\
	4 \nu-2\bar{f}^{(1,2)} z- \dfrac{4i}{3}\bar{f}^{(0,3)} z^2 + 4 \bar{\nu}^2 z^3  \\
	i \bar{g}^{(0,3)}-2 \bar{f}^{(0,3)} z-6i\bar{\nu}^2 z^2
\end{pmatrix} \\
& =
\begin{pmatrix}
	2 \nu  z \\ 
	\dfrac{1}{3} i z \left(3\bar{f} ^{(1,2)}+\bar{g} ^{(0,3)} \right)-2 i \nu \\ -2 \bar{\nu}^2 z^2 +  \dfrac{1}{3} \bar{g} ^{(0,3)}+\dfrac{2}{3} i z \bar{f} ^{(0,3)}
\end{pmatrix}.
\end{align*}

Applying the differential operators $\partial_{\zba}^j \partial_{\wba}^{2-j} \partial_w$, $j=0,1,2$, to the mapping equation \eqref{eq:mapeq} and evaluating along $\Sigma$, we obtain
\[
\begin{pmatrix}
	0 & 4 i z & 1 \\
	-z & 1 & 0 \\
	i & 0 & 0  \\
\end{pmatrix}
\cdot
\begin{pmatrix}
	r_w\\
	s_w\\
	\phi_w
\end{pmatrix}
=
\frac{i}{2}
\begin{pmatrix}
	p\\
	t\\
	q
\end{pmatrix}
	+
\begin{pmatrix}
	-2 \alpha  z f_w+i \alpha \phi  g_w  \\
	\dfrac{1}{2} i \alpha  f_w -3 i \bar{\nu} z f_w-\dfrac{3}{2}\bar{\nu}\phi g_w\\
	-2 \bar{\nu} f_w
\end{pmatrix}.
\]
The coefficent matrix on the left, denoted by $D_3$, is invertible with the inverse
\[
	D_3^{-1} = 
	\begin{pmatrix}
		0 & 0 & -i \\
		0 & 1 & -i z \\
		1 & -4 i z & -4 z^2 
	\end{pmatrix}
\]
Using the formula for $p, t, q$ and $f_w$, $g_w$ along $\Sigma$, we find that 
\begin{align}
	r_w\bigl|_{\Sigma}
	& = \frac{1}{6} \bar{g} ^{(0,3)}+\frac{1}{6} z \left(-6 \alpha  \bar{\nu}+2 i \bar{f} ^{(0,3)}\right)-6 \bar{\nu}^2 z^2,\\
	s_w\bigl|_{\Sigma} & = z \left(-\frac{1}{2} \bar{f} ^{(1,2)}+\frac{1}{3} \bar{g} ^{(0,3)}-\frac{\alpha ^2}{4}\right)+z^2 \left(-\frac{9 \alpha  \bar{\nu}}{4}+\frac{1}{3} i \bar{f} ^{(0,3)}\right)+\nu -\frac{3}{2} \bar{\nu}^2 z^3, \\
	\phi_w\bigl|_{\Sigma} & = \frac{1}{3} i z^2 \left(6 \bar{f} ^{(1,2)}-4 \bar{g} ^{(0,3)}+3 \alpha ^2\right)+\frac{1}{3} i z^3 \left(18 \alpha  \bar{\nu}-4 i \bar{f} ^{(0,3)}\right)+6 i \bar{\nu}^2 z^4-3 i \nu  z
\end{align}
From the formula for $\phi_w$, we can compute
\[
	\phi^{(2,1)} 
	=
	\frac{2}{3} i \left(6 \bar{f} ^{(1,2)}-4 \bar{g} ^{(0,3)}+3 \alpha ^2\right).
\]
This and \cref{lem:3jetcase1} imply the following ``reflection identity''
\[
	6 f^{(1,2)}+6 \bar{f} ^{(1,2)}-2 g^{(0,3)}-4 \bar{g} ^{(0,3)}+3 \alpha ^2 = 0,
\]
which, in turn, shows that $g^{(0,3)}$ is real and
\[
	g^{(0,3)} = \bar{g}^{(0,3)}= f^{(1,2)}+\bar{f}^{(1,2)}+\frac{\alpha ^2}{2}.
\]
Plugging this back into the formula for $\phi_{w}$
\[
	\phi_w\bigl|_{\Sigma}  = -3 i \nu  z + \frac{1}{3} i z^2 \left(2 \bar{f} ^{(1,2)}-4f ^{(1,2)}+ \alpha ^2\right)+\frac{1}{3} i z^3 \left(18 \alpha  \bar{\nu}-4 i \bar{f} ^{(0,3)}\right)+6 i \bar{\nu}^2 z^4.
\]

 Since $f = z +  w(zr +s)$, we have
\[
f_w(z,0) = zr_w(z,0) + s_{w}(z,0).
\]
Plugging in the formulas for $r_w$ and $s_w$ along $w=0$, we obtain the desired formula for $f_w(z,0)$. The formulas for $g_w$ and $g_{ww}$ along $w=0$ can be deduced similarly from the identity $g = w(1+wr)$. 
The proof is complete.
\end{proof}
\begin{lem}\label{lem:f03}
	If $\nu = 0$, then $f^{(0,3)} = 0$.
\end{lem}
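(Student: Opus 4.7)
The plan is to extract $f^{(0,3)}$ directly from the holomorphic identity \eqref{eq:hol1b} of \cref{lem:49}, which collapses dramatically under the current hypotheses. Since we are in Case~2 ($\lambda=0$) with $\sigma=0$ and $\mu=-3i\nu$ already established, the extra assumption $\nu=0$ forces $\mu=0$ as well, so $\mu+4i\nu=0$. After dividing by $z$, \eqref{eq:hol1b} then reduces to the formal power series identity
\begin{equation*}
4z^{2}g-4zw\,f+w^{2}\phi+\alpha\,w^{2}\bigl(g\phi+i f^{2}\bigr)=0.
\end{equation*}

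Next I will compare coefficients of the single monomial $zw^{4}$ on both sides. From \cref{lem:3.3} specialized to $\lambda=\mu=\nu=0$, the vanishings $f^{(3,0)}=f^{(2,1)}=g^{(3,0)}=g^{(2,1)}=g^{(1,2)}=\phi^{(3,0)}=0$ hold, and \cref{lem:3jetcase1} gives $\phi^{(0,3)}=0$ together with the crucial identity $\phi^{(1,2)}=-\frac{4}{3}i f^{(0,3)}$. The term $4z^{2}g$ carries an unavoidable $z^{2}$ factor and so contributes nothing to the coefficient of $zw^{4}$. Likewise $g\phi+if^{2}$ is divisible by $z^{2}$, because $g$ starts at $w$, $\phi$ starts at $z^{2}$, and $f$ starts at $z$; hence $w^{2}(g\phi+if^{2})$ carries no $zw^{4}$ term either. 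The only surviving contributions come from $-4zw\,f$, via the $w^{3}$-coefficient $\frac{1}{6}f^{(0,3)}$ of $f$, giving $-\frac{2}{3}f^{(0,3)}$, and from $w^{2}\phi$, via the $zw^{2}$-coefficient $\frac{1}{2}\phi^{(1,2)}=-\frac{2}{3}i f^{(0,3)}$ of $\phi$.

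Setting the sum of these two contributions to zero yields
\begin{equation*}
-\tfrac{2}{3}(1+i)\,f^{(0,3)}=0,
\end{equation*}
which forces $f^{(0,3)}=0$, as required. The entire argument is thus a single coefficient comparison in a greatly simplified mapping equation, with no significant obstacle: the only thing to verify carefully is that no other monomial in the normal form expansions of $f,\phi,g$ produces a $zw^{4}$ term, and this is immediate from the vanishings collected in \cref{lem:3.3,lem:3jetcase1}.
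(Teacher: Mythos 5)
Your argument hinges on the printed form of \eqref{eq:hol1b}, but that formula contains a typo which your proof inherits in an essential way. Re-deriving it from \eqref{eq:mappara}: the term $-i\zba^2\phi$ becomes $+\tfrac{i w^2}{4z^2}\,\phi$ after the substitution $\zba = w/(2iz)$ (since $(2iz)^2=-4z^2$), so the correct identity is $4z^3g-4z^2wf+i\,w^2z\,\phi-w^2\Upsilon\left(w(\mu+4i\nu)-\alpha z\right)=0$; equivalently, with $\mu=\nu=\sigma=0$ and after dividing by $z$, it is $4z^2g-4zwf+i\,w^2\phi+\alpha w^2\Upsilon=0$, which is exactly \eqref{eq1} of \cref{lem:holeq2} (note the factor $i$ there), and which can be checked directly on the maps $r_{\pm1}$ (they have $\lambda=\nu=0$, $\alpha=\pm2$, and they violate the identity you wrote, while satisfying the one with the $i$). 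With the missing $i$ restored, your comparison of $zw^4$-coefficients reads $-\tfrac{2}{3} f^{(0,3)}+\tfrac{i}{2}\phi^{(1,2)}=0$, which is precisely the relation $\phi^{(1,2)}=-\tfrac{4}{3} i f^{(0,3)}$ already recorded in \cref{lem:3jetcase1}; it reduces to $0=0$ and puts no constraint whatsoever on $f^{(0,3)}$. The clean outcome $-\tfrac{2}{3}(1+i)f^{(0,3)}=0$ in your computation is therefore an artifact of the typo, not a proof.

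This is a genuine gap, not a repairable slip: a single low-order coefficient of the first holomorphic functional equation cannot force $f^{(0,3)}=0$, which is why the paper's proof brings in a second, independent relation, obtained by applying the CR vector field $L$ to the mapping equation and evaluating at $\wba=0$. After substituting $\zba=w/(2iz)$, clearing denominators and setting $z=0$, that relation gives $f^{(0,3)}\,\Upsilon(0,w)=0$; combining it with $\phi(0,w)=\alpha f(0,w)^2/(-1+i\alpha g(0,w))$, which follows from the (corrected) \eqref{eq:hol1b} at $z=0$, yields $f^{(0,3)} f(0,w)^2=0$, and both alternatives give $f^{(0,3)}=0$. A smaller point: your justification that $w^2\Upsilon$ contributes nothing to $zw^4$, namely that ``$\Upsilon$ is divisible by $z^2$,'' is not available at this stage (a priori $f$ may contain a $w^3$-term, so $f^2$ contains $zw^3$, and $\phi$ need not be divisible by $z^2$); what is true, and all you actually need, is that the $zw^2$-coefficient of $\Upsilon=g\phi+if^2$ vanishes because $\mu=\nu=0$ in the normal form \eqref{eq:normalform}.
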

\begin{proof}
Using the formula for $f_w, \phi_w$, and $g_w$ along $\Sigma$, we can produce another holomorphic equation for $f, \phi$, and $g$ by differentiating the mapping equation along the CR vector field. Indeed, we have
\[
	L\left(\vr (H(z,\wba + 2i |z|^2), \overline{H}(\zba,\wba))\right) = 0.
\]
Substituting $\wba = 0$ and using $\fba(\zba,0) = \zba, \pba(\zba,0) = \alpha \zba^2 + 2\nu\zba^3, \bar{g}(\zba,0) = 0$, we find that 
\begin{align}\label{eq:hol2temp}
	0=2i z+ \Upsilon(z,2i|z|^2)\left(2 i z \bar{\phi}_{\wba}(\zba ,0)	-\bar{\phi}_{\zba}(\zba ,0)\right)-f(z,2 i |z|^2) \left(4 z \fba_{\wba}(\zba,0)+2 i\right) \notag \\
	- \phi (z,2 i |z|^2) \left(4 |z|^2 \fba_{\wba}(\zba,0)+2 i z \bar{\phi} (\zba ,0)+2 i \zba \right),
\end{align}
where, as before, $\Upsilon: = \phi g + i f^2$.  Substituting $\zba = w/(2i z)$, we obtain an equation of the form
\[
	0 = 2i z + \Upsilon(z,w) +\cdots 
\]
Since $\nu = 0$, we have $\pba_{\wba}(\zba,0)=\frac43 f^{(0,3)} \zba^3 + $ lower order terms. Clearing the denominator $z^2$, we obtain a holomorphic functional equation for $f(z,w), g(z,w)$, and $\phi(z,w)$. 
Setting $z=0$, we obtain $0 = w^3 f^{(0,3)}\Upsilon(0,w)$. Thus, 
\begin{equation}\label{eq:t1}
	f^{(0,3)}\Upsilon(0,w)=0.
\end{equation}
On the other hand, with $\nu=0$, setting $w=0$ in the first holomorphic functional equation \cref{eq:hol1b} and solving for $g(0,w)$, we find that
\[
\phi (0,w) =  \frac{ \alpha  f(0,w)^2}{-1+i\alpha  g(0,w)}.
\]
Substituting $\phi(0,w)$ as above into \eqref{eq:t1}, we find that
\[
	f^{(0,3)} f(0,w)^2 = 0.
\]
Hence, either $f^{(0,3)} = 0$ or $f(0,w)=0$. But the latter also implies that $f^{(0,3)} = 0$. The proof is complete.
\end{proof}
\begin{lem}\label{lem:cond4A} If $\lambda = 0$, then $\sigma = 0$ and $\mu = -3i\nu$. Moreover,
\begin{align*}
	\phi ^{(3,1)} & = 12i \alpha  \bar{\nu},\\
	\phi ^{(2,2)} & = \frac{2i}{3} \left(g^{(0,4)}-4 f^{(1,3)}\right)-\frac{5\alpha}{3}  f^{(1,2)}-\frac{8\alpha}{3}   \bar{f}^{(1,2)}-\frac{1}{3}\alpha ^3-12 |\nu|^2,\\
	\phi ^{(1,3)} & = 4i\nu  \bar{f}^{(1,2)}-2i\nu  f^{(1,2)}-if^{(0,4)}+\frac{13}{2}i \alpha ^2 \nu ,\\
	\phi ^{(0,4)} &= 24\alpha\nu^2.
\end{align*}
\end{lem}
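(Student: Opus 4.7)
The assertions $\sigma=0$ and $\mu=-3i\nu$ are already contained in \cref{lem:49,lem:310,lem:311}, so only the four identities for the fourth-order partials of $\phi$ remain to be verified. I would obtain them by the same technique used in the proof of \cref{lem:311}, now carried out with $\mu=-3i\nu$ in place of $\mu=-4i\nu$.

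Concretely, apply the differential operators $\partial_{\zba}^{j}\partial_{\wba}^{4-j}$ for $j=0,1,2,3,4$ to the mapping equation \cref{eq:mapeq} and evaluate at $w=\zba=\wba=0$. Using the 3-jet relations of \cref{lem:3jetcase1,lem:f03}, the formulas for $f|_\Sigma$, $\phi|_\Sigma$, $r|_\Sigma$, $s|_\Sigma$ together with $f(z,0)=z$ and $\phi(z,0)=\alpha z^{2}+2\nu z^{3}$ from \cref{lem:first}, and the vanishing $k(z,0)=0$ already established within the proof of \cref{lem:311} (which uses only $\phi^{(4,0)}=96\bar\sigma=0$), one obtains an overdetermined linear system of four equations in the three auxiliary functions $l(z,0),\, m(z,0),\, n(z,0)$. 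By the Kronecker-Capelli theorem, solvability forces the determinant $\Delta$ of the corresponding $4\times 4$ augmented matrix to vanish identically as a polynomial in $z$.

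After substituting $\mu=-3i\nu$, the right-hand side entries are explicit polynomials in $z$ of controlled degree, and $\Delta$ becomes a polynomial in $z$ of degree at most about $5$. Equating the coefficient of each power of $z$ to zero, starting from the highest and working downward, produces the four identities in the order $\phi^{(0,4)}=24\alpha\nu^{2}$, then $\phi^{(1,3)}$, then $\phi^{(2,2)}$, and finally $\phi^{(3,1)}=12i\alpha\bar\nu$, each expressing the fourth-order partial of $\phi$ in terms of $\alpha$, $\nu$ and the fourth-order partials of $f$ and $g$ as stated. Once these compatibility relations hold, the reduced $3\times 3$ subsystem formed by the last three rows of the coefficient matrix is invertible (its determinant is a nonzero constant), so it uniquely determines $l$, $m$, $n$ along $\Sigma$.

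The main obstacle is purely algebraic bookkeeping: the right-hand sides of the fourth-order system involve many lower-order jets, and one has to substitute $\mu=-3i\nu$ together with the reality identity $g^{(0,3)}=f^{(1,2)}+\bar f^{(1,2)}+\alpha^{2}/2$ consistently before expanding $\Delta$ and reading off coefficients. As is done elsewhere in the paper, I would rely on computer algebra to expand the $4\times 4$ determinant and extract each coefficient; the conceptual content is entirely captured by the Kronecker-Capelli compatibility argument.
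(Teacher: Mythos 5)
Your proposal is correct and takes essentially the same route as the paper: the paper likewise applies $\partial_{\zba}^j\partial_{\wba}^{4-j}$ to \eqref{eq:mapeq}, uses $k(z,0)=0$, and reads the four identities off the coefficients of $z^k$, $k=2,\dots,5$, in the Kronecker--Capelli determinant for the overdetermined system in $l,m,n$, after which the invertible $3\times3$ subsystem gives $l,m,n$ along $\Sigma$. Two small slips worth fixing: with $\mu=-3i\nu$ one has $\phi(z,0)=\alpha z^{2}+2\bar\nu z^{3}$ (conjugate on $\nu$), and $k(z,0)=0$ is established in the proof of \cref{lem:310} (from $\phi^{(4,0)}=96\bar\sigma=0$), not in \cref{lem:311}.
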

\begin{proof} The solvability condition for the overdetermined system for $l,m$, and $n$ reads
\begin{align}
\begin{pmatrix}
	-6 z & 0 & 0 & 18 i \nu  \phi -z^2 \bar{\phi} ^{(3,1)} \\
	2 i & -4 z & 0 & -2 z \bar{f} ^{(2,2)}-z^2 \bar{\phi} ^{(2,2)} +(\alpha^2-4 \bar{f} ^{(1,2)}-2 i \bar{\phi} ^{(2,1)} ) \phi\\
	0 & 3 i & -2 z & -2 z \bar{f} ^{(1,3)}+i\bar{g} ^{(1,3)}- z^2 \bar{\phi} ^{(1,3)}+(6 i\alpha  \bar{\nu}-3i \bar{\phi} ^{(1,2)}-2 \bar{f} ^{(0,3)} )\phi \\
	0 & 0 & 4 i & -2 z \bar{f} ^{(0,4)}+i \bar{g} ^{(0,4)}-z^2 \bar{\phi} ^{(0,4)}
\end{pmatrix}
=0.
\end{align}
Here we used $6\bar{\nu}^2+ i \bar{\phi} ^{(0,3)} = 0$. Recall that $\phi(z,0) =  2 \bar{\nu} z^3+\alpha  z^2$ and $f(z,0) = z$. Plugging these into the determinant and equating the coefficients of $z^k$, $k=2,3,4,5$, we obtain 
	\begin{align*}
	\phi ^{(3,1)} & = -2 i \left(3 f^{(2,2)}-2 g^{(1,3)}+9 \alpha  \bar{\nu}\right) = 4\left(2\bar{f}^{(0,3)} + 3i \alpha  \bar{\nu}\right),\\
	\phi ^{(2,2)} & = \alpha  f^{(1,2)}+\frac{2}{3} i \left(g^{(0,4)}-4 f^{(1,3)}\right)-\frac{8}{3} \alpha  g^{(0,3)}+\alpha ^3-12 |\nu|^2,\\
	\phi ^{(1,3)} & = 2 \alpha  f^{(0,3)}-i \left(6 \nu  f^{(1,2)}+f^{(0,4)}-4 \nu  g^{(0,3)}\right)-\frac{3}{2}i \alpha ^2 \nu ,\\
	\phi ^{(0,4)} &= -8 i \nu  f^{(0,3)}.
\end{align*}
With these equations being satisfied, we can solve for $l,m$, and $n$ along $\Sigma$.
\end{proof}
\begin{lem}\label{lem:jet34mix} 
\begin{align}
	\bar{g} ^{(1,3)} & = \frac{3}{4} \left(\bar{f} ^{(2,2)} +5 \alpha  \nu \right),\\
	\bar{g} ^{(0,4)} & = \frac{1}{2} i \left(4 \alpha  f^{(1,2)} +10 \alpha  \bar{f} ^{(1,2)} -4 i \bar{f} ^{(1,3)} -4 \alpha  \bar{g} ^{(0,3)} +2 \alpha ^3+75 |\nu|^2\right),\\
	\bar{f} ^{(0,4)} & = \frac{1}{2} \bar{\nu} \left(36  f^{(1,2)} +18  \bar{f} ^{(1,2)} -16  \bar{g} ^{(0,3)} -3 \alpha ^2\right),\\
	\bar{f} ^{(0,3)} & = -3 i \alpha  \bar{\nu}
\end{align}
\end{lem}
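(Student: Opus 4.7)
The proof continues the systematic extraction of jet relations from the mapping equation \eqref{eq:mapeq} via higher order differentiations, following the pattern of Lemmas \ref{lem:3jetcase1}, \ref{lem:310}, \ref{lem:311}, and \ref{lem:cond4A}. The idea is to push the order of differentiation one step beyond what was used in Lemma \ref{lem:cond4A}, and to exploit the two holomorphic functional equations already available in this case, namely \eqref{eq:hol1b} and the one derived implicitly in Lemma \ref{lem:f03} from applying $L$ to the mapping equation.

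For the cubic relation $\bar f^{(0,3)} = -3i\alpha\bar\nu$, my plan is to return to equation \eqref{eq:hol2temp}. After substituting $\bar z = w/(2iz)$ and clearing denominators one obtains a holomorphic identity among $f,g,\phi$ whose coefficients involve the Taylor expansions of $\bar f_{\bar w}(\bar z,0)$, $\bar g_{\bar w}(\bar z,0)$ and $\bar \phi_{\bar w}(\bar z,0)$; these expansions encode the jets $\bar f^{(j,1)}$ etc.\ for small $j$. Differentiating this identity in $z$ three times at $z=w=0$ and substituting the previously established values ($\mu=-3i\nu$, $\sigma=0$, Lemmas \ref{lem:3jetcase1} and \ref{lem:cond4A}) produces, after cancellation, a single algebraic equation whose only unknown is $\bar f^{(0,3)}$, forcing the claimed value. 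The coefficient $-3i\alpha\bar\nu$ arises from balancing the $\alpha$-term of $\phi$ against the reflected cubic coefficient $\bar\nu$.

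For the three quartic identities on $\bar g^{(1,3)}$, $\bar g^{(0,4)}$, and $\bar f^{(0,4)}$, I would proceed in strict analogy with Lemma \ref{lem:cond4A}: introduce the fifth-order auxiliary functions $Q_{\bar z^j\bar w^{5-j}}(z,w,0,0)$, $j=0,\dots,5$, analogous to $k,l,m,n$ of \eqref{eq:klmndef}, and apply the differential operators $\partial_{\bar z}^{j}\partial_{\bar w}^{5-j}$ to \eqref{eq:mapeq}, evaluating at $w=\bar z=\bar w=0$. This yields a $6\times 5$ overdetermined linear system for the auxiliary functions along $\Sigma$. Kronecker--Capelli reduces solvability to the vanishing of the $6\times 6$ augmented determinant as a polynomial in $z$. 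Substituting $f(z,0)=z$, $\phi(z,0)=\alpha z^2+2\bar\nu z^3$, the formulas for $f_w,g_w,\phi_w,f_{ww},g_{ww},g_{www}$ along $\Sigma$ obtained in the preceding lemmas, and the expressions for $\phi^{(j,4-j)}$ from Lemma \ref{lem:cond4A}, then matching the coefficients of successive powers of $z$ in this determinant produces exactly the three claimed formulas; the remaining coefficient equations are automatically consequences of Lemma \ref{lem:cond4A}.

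The principal obstacle is purely algebraic: each additional differentiation generates large polynomial expressions, and the substitutions from previous lemmas propagate nonlinearly. One must verify that among the five nontrivial coefficient equations in $z$, exactly three determine the desired quartic jets, while the others are either automatically satisfied or reduce to known identities; otherwise spurious constraints would appear. As mentioned in the introduction of the paper, a computer algebra system is essential for performing the expansions and determinant evaluations reliably. A subtler conceptual point is that the $2$-nondegeneracy of $\lc$ predicts that 4-jets should suffice to determine the map completely, so no genuinely new constraints can arise at order five beyond the four formulas of the present lemma; making sure the bookkeeping respects this structural prediction is a useful sanity check throughout.
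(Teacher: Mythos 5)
There is a genuine gap: you have misidentified which derived equations produce these identities. In the paper all four formulas come from a single \emph{mixed} computation: one applies $\partial_w\partial_{\bar z}^{j}\partial_{\bar w}^{3-j}$, $j=0,\dots,3$, to \eqref{eq:mapeq} and evaluates at $w=\bar z=\bar w=0$, obtaining an overdetermined $4\times 3$ linear system for $p_w,t_w,q_w$ along $\Sigma$ whose right-hand side contains $l,m,n$ (carrying the barred $4$-jets $\bar g^{(1,3)},\bar g^{(0,4)},\bar f^{(1,3)},\bar f^{(0,4)},\bar f^{(2,2)}$ via \cref{lem:cond4A}) together with $f_w|_\Sigma$, $g_w|_\Sigma$ and $\phi_w|_\Sigma$ (which carry the \emph{unbarred} data $f^{(1,2)}$, $f^{(0,3)}$); equating the coefficients of the powers of $z$ in the Kronecker--Capelli determinant of the augmented $4\times4$ matrix yields exactly the stated identities. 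Your substitute for the quartic relations --- a pure fifth-order barred system $\partial_{\bar z}^{j}\partial_{\bar w}^{5-j}$ --- cannot deliver them: evaluated at $w=\bar z=\bar w=0$, its right-hand side involves only $H|_\Sigma=(z,\alpha z^2+2\bar\nu z^3,0)$ and fifth-order barred jets, so the unbarred quantity $f^{(1,2)}$ and the mixed term $|\nu|^2$ appearing in the lemma can never enter, and you introduce a new layer of unknown $5$-jets with no elimination procedure (also, the auxiliary unknowns at that stage would be the five fourth-order derivatives $Q_{\bar z^{j}\bar w^{4-j}}$, not fifth-order ones). Your own sanity check via $2$-nondegeneracy in fact argues against this route, since you would be extracting order-four information from order-five identities.

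The derivation you propose for $\bar f^{(0,3)}=-3i\alpha\bar\nu$ also does not work as stated. After substituting $\bar z=w/(2iz)$ in \eqref{eq:hol2temp} and clearing denominators, every term carrying $f^{(0,3)}$ (it sits in the $\bar z^3$-coefficient of $\bar\phi_{\bar w}(\bar z,0)$) is multiplied by a positive power of $w$; hence differentiating three times in $z$ at $z=w=0$ only reproduces the already known $3$-jet of $H$ along $\Sigma$ and isolates nothing. If instead you extract mixed coefficients such as $z^2w^3$, undetermined higher-order jets of $f,\phi,g$ enter through $\Upsilon$, so no equation ``whose only unknown is $\bar f^{(0,3)}$'' emerges. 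In the paper this relation is the coefficient of $z^5$ of the mixed determinant above and comes out in the form $\bar\nu\left(3\alpha\bar\nu-i\bar f^{(0,3)}\right)=0$ (after inserting $\phi^{(1,2)}$ from \cref{lem:3jetcase1}); it therefore determines $\bar f^{(0,3)}$ only when $\nu\neq0$, and the case $\nu=0$ must be settled separately, which is precisely the role of \cref{lem:f03}. This case distinction, and the fact that the constraint appears only multiplied by $\bar\nu$, are absent from your sketch.
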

\begin{proof}
Applying the $4^{\mathrm{th}}$-order differential operators $\partial_w \partial_{\zba}^j\partial_{\wba}^{3-j}$, $j = 0,1,2,3$, to the mapping equation \eqref{eq:mapeq} and evaluating at $w = \zba = \wba = 0$, we obtain a system of 4 linear equations of 3 unknowns that must be satisfied by $p_w, t_w$, and $q_w$ along $\Sigma$. By Kronecker-Capelli theorem again, the following determinant must vanish identically on the first Segre set:
\[
\Delta:=
\det 
\begin{pmatrix}
	-z & 0 & 0 & -6 i |\nu|^2 z^3 \\
	i & -4 z & 0 & i l(z,0)-3 i \bar{\nu} z^3 \bar{\phi} ^{(2,1)} -25 |\nu|^2 z^2-5 \alpha  \nu  z \\
	0 & 2 i & -2 z & -5 i \bar{\nu} z^2 \bar{f} ^{(1,2)} -i \alpha  z \bar{f} ^{(1,2)} +i m-3 i \bar{\nu} z^3 \bar{\phi} ^{(1,2)} +2 \bar{\nu} \phi_w \\
	0 & 0 & 3 i & -5 i \bar{\nu} z^2 \bar{f} ^{(0,3)} -i \alpha  z \bar{f} ^{(0,3)} +i n(z,0)+18 \bar{\nu}^3 z^3 \\
\end{pmatrix} = 0.
\]
Expanding the determinant along the last column, the terms are of degree at most 6. Collecting the terms of kind $z^6$, we easily see that they are cancelled out. Since $n(z,0)$ are linear in $z$, the degree 5 terms only come from the terms of kind $z^2$ in $(4,4)$-entry and the terms of kind $z^3$ in $(3,4)$-entry. Explicitly, equating the coefficient of $z^5$, we obtain
\[
	\bar{\nu} \left(2 i \bar{f} ^{(0,3)} -3 \bar{\phi} ^{(1,2)} +12 \alpha  \bar{\nu}\right) =0.
\]
Using the formula for $\phi^{(1,2)}$ in \cref{lem:3jetcase1}, we obtain 
\[
	\bar{\nu} \left(3 \alpha  \bar{\nu}-i \bar{f} ^{(0,3)} \right) = 0.
\]
Thus, 
\[
	i \bar{f} ^{(0,3)} = 3 \alpha  \bar{\nu},
\]
provided that $\nu \ne 0$.  On the other hand, if $\nu = 0$, then this also holds by Lemma~\ref{lem:f03}. Consequently, $g^{(1,3)} = 0$, $f^{(2,2)} = -5 \alpha\bar{\nu}$, $\phi^{(3,1)} = 12i\alpha \bar{\nu}$.
\end{proof}
Putting these calculations above together, we obtain
\begin{lem}\label{lem:fwwgwww2} If $\lambda = 0$, then
\begin{align}
	\phi_w\bigl|_{\Sigma}  
	& = -3 i \nu  z+\frac{1}{3} i z^2 \left(2 \bar{f} ^{(1,2)}-4f ^{(1,2)}+ \alpha ^2\right)+2i\alpha \bar{\nu} z^3+6 i \bar{\nu}^2 z^4,\\
	f_{ww}\bigl|_{\Sigma}
	& =
	2 \nu+f^{(1,2)}z-\frac{5}{2}\alpha \bar{\nu}z^2 -15 \bar{\nu}^2 z^3,\\
	g_{www}\bigl|_{\Sigma}
	& = 2\Re f^{(1,2)}+\frac{\alpha ^2}{2}-36 \bar{\nu}^2 z^2.
\end{align}
\end{lem}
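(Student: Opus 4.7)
The statement to prove is essentially a bookkeeping consequence of the previous lemmas: the formulas for $\phi_w|_\Sigma$, $f_{ww}|_\Sigma$ and $g_{www}|_\Sigma$ have already been computed in the lemma preceding \cref{lem:f03}, but with $\bar{f}^{(0,3)}$ and $\bar{g}^{(0,3)}$ still appearing as free parameters. What \cref{lem:jet34mix} (together with the reality of $g^{(0,3)}$ derived from the reflection identity) provides is precisely the missing relation $\bar{f}^{(0,3)} = -3i\alpha\bar{\nu}$, so the plan is simply to substitute and simplify.

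The plan is as follows. First I would recall from the earlier lemma the expressions
\begin{align*}
\phi_w\bigl|_{\Sigma} &= -3 i \nu  z+\tfrac{1}{3} i z^2 \bigl(2 \bar{f}^{(1,2)}-4f^{(1,2)}+ \alpha^2\bigr)+\tfrac{1}{3} i z^3 \bigl(18 \alpha  \bar{\nu}-4 i \bar{f}^{(0,3)}\bigr)+6 i \bar{\nu}^2 z^4,\\
f_{ww}\bigl|_{\Sigma} &= 2 \nu+f^{(1,2)}z+z^2 \bigl(\tfrac{4}{3} i \bar{f}^{(0,3)}-\tfrac{13 \alpha  \bar{\nu}}{2}\bigr) -15 \bar{\nu}^2 z^3,\\
g_{www}\bigl|_{\Sigma} &= \bar{g}^{(0,3)}+ 2 z \bigl(i \bar{f}^{(0,3)}-3 \alpha  \bar{\nu}\bigr)-36 \bar{\nu}^2 z^2.
\end{align*}
Then I would insert the identity $\bar{f}^{(0,3)} = -3i\alpha\bar{\nu}$ from \cref{lem:jet34mix}. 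For $\phi_w|_\Sigma$ the $z^3$-coefficient collapses to $\tfrac{1}{3}i(18\alpha\bar{\nu}-12\alpha\bar{\nu}) = 2i\alpha\bar{\nu}$; for $f_{ww}|_\Sigma$ the $z^2$-coefficient becomes $4\alpha\bar{\nu} - \tfrac{13}{2}\alpha\bar{\nu} = -\tfrac{5}{2}\alpha\bar{\nu}$; and for $g_{www}|_\Sigma$ the linear term $2z(i\bar{f}^{(0,3)}-3\alpha\bar{\nu}) = 2z(3\alpha\bar{\nu}-3\alpha\bar{\nu})$ vanishes.

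It remains to identify the constant term of $g_{www}|_\Sigma$. For this I would invoke the reflection identity obtained from comparing the formula for $\phi^{(2,1)}$ read off from $\phi_w$ with the value given in \cref{lem:3jetcase1}, namely $g^{(0,3)} = \bar{g}^{(0,3)} = f^{(1,2)}+\bar{f}^{(1,2)}+\tfrac{\alpha^2}{2} = 2\Re f^{(1,2)}+\tfrac{\alpha^2}{2}$. Substituting this into the constant term completes the identification of $g_{www}|_\Sigma$.

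There is essentially no obstacle here beyond algebraic bookkeeping: all three identities follow by direct substitution of the value of $\bar{f}^{(0,3)}$ from \cref{lem:jet34mix} and the reality/value of $\bar{g}^{(0,3)}$ into the previously derived formulas. The only subtle point worth flagging is that the cancellation in the $z^3$-term of $g_{www}|_\Sigma$ is exactly the content of the relation $\bar{f}^{(0,3)} = -3i\alpha\bar{\nu}$, so this lemma makes manifest that \cref{lem:jet34mix} is precisely what is needed to eliminate the unknown third-order jets from these Segre-set values.
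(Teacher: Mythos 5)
Your proposal is correct and takes exactly the paper's route: the paper gives no separate argument for this lemma beyond ``putting these calculations above together,'' i.e.\ substituting $\bar f^{(0,3)} = -3i\alpha\bar\nu$ from \cref{lem:jet34mix} and $\bar g^{(0,3)} = 2\Re f^{(1,2)} + \tfrac{\alpha^2}{2}$ from the reflection identity into the previously derived Segre-set formulas, which is precisely what you do, and your three coefficient simplifications check out. The only cosmetic slip is at the end, where the cancellation you flag in $g_{www}|_{\Sigma}$ occurs in the coefficient of $z$, not of $z^3$.
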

When the equalities in \cref{lem:cond4A} hold, the overdetermined system for $l, m, n$ can be solved uniquely.
Explicitly,
\begin{align}
	l\bigl|_{\Sigma} & = -\frac{1}{3} i z \left(2 \bar{g} ^{(1,3)}-3 \bar{f} ^{(2,2)}\right)-6 i |\nu|^2 z^2 =  -5i \alpha\nu z -6 i |\nu|^2 z^2, \\
	m\bigl|_{\Sigma} & = \frac{1}{6} \bar{\nu} z^2 \left(8 f^{(1,2)}-4 \bar{f} ^{(1,2)}+\alpha ^2\right)-\frac{1}{6} i z \left(\bar{g} ^{(0,4)}-4 \bar{f} ^{(1,3)}\right)+\frac{1}{3} \bar{g} ^{(1,3)} \notag \\
	& = \frac{1}{6} \bar{\nu} z^2 \left(8 f^{(1,2)}-4 \bar{f} ^{(1,2)}+\alpha ^2\right)-\frac{1}{6} i z \left(\bar{g} ^{(0,4)}-4 \bar{f} ^{(1,3)}\right),\\
	n\bigl|_{\Sigma} & = -2 \bar{\nu} z^2 \bar{f} ^{(0,3)}+\frac{1}{2} i z \bar{f} ^{(0,4)}+\frac{1}{4} \bar{g} ^{(0,4)} \notag  \\
	 & = 6i \alpha \bar{\nu}^2 z^2+\frac{1}{2} i z \bar{f} ^{(0,4)}+\frac{1}{4} \bar{g} ^{(0,4)}.
\end{align}
Recall that $k\bigl|_{\Sigma} = 0$.

In the next step, we solve for $p_w, t_w$, and $q_w$ along $\Sigma$. To this end, we apply $\partial_{\zba}^j \partial_{\wba}^{3-j} \partial_w$, $j=0,1,2,3$, to the mapping equation \cref{eq:mapeq}, evaluating along $\zba = \wba = w =0$, and plugging in the formulas above for $l,m,$ and $n$, to obtain
\begin{align*}
\begin{pmatrix}
	-z & 0 & 0 \\
	i & -4 z & 0 \\
	0 & 2 i & -2 z \\
	0 & 0 & 3 i \\
\end{pmatrix}
&\cdot
\begin{pmatrix}
	p_w \\
	t_w \\
	q_w
\end{pmatrix}\\
&=
\begin{pmatrix}
	-6i |\nu|^2 z^3 \\
	il + 2\bar{\nu} z^3(4\bar{f} ^{(1,2)}-2f^{(1,2)}-\alpha ^2)-25 |\nu|^2 z^2-5 \alpha  \nu  z \\
	im+2\bar{\nu} z^3 (2\bar{f} ^{(0,3)}-3 i \alpha  \bar{\nu})-5 i \bar{\nu} z^2 \bar{f} ^{(1,2)}-i \alpha  z \bar{f} ^{(1,2)}+2 \bar{\nu} \phi_w \\
	in-5 i \bar{\nu} z^2 \bar{f} ^{(0,3)}-i \alpha  z \bar{f} ^{(0,3)}+18 \bar{\nu}^3 z^3
\end{pmatrix}\\
 &= 
 \begin{pmatrix}
 	-6i |\nu|^2 z^3 \\
 	2\bar{\nu} z^3(4\bar{f} ^{(1,2)}-2f^{(1,2)}-\alpha ^2)-19 |\nu|^2 z^2 \\
 	im+18i\alpha \bar{\nu}^2 z^3-5 i \bar{\nu} z^2 \bar{f} ^{(1,2)}-i \alpha  z \bar{f} ^{(1,2)}+2 \bar{\nu} \phi_w \\
 	-\frac{i}{4}\bar{g}^{(0,4)}-\left(\frac{1}{2}\bar{f}^{(0,4)} + 3\alpha^2 \bar{\nu}\right)z-21\alpha\bar{\nu}^2 z^2+18 \bar{\nu}^3 z^3
 \end{pmatrix}
\end{align*}
when $w=0$. 
Here we have used various formulas for the $3^{\mathrm{rd}}$-order derivatives of $\phi$ at $z=w=0$ obtained earlier in \cref{lem:3jetcase1}.
The resulting overdetermined system is solvable if and only if the corresponding augmented matrix is degenerate. By direct computation the determinant is equal to
\[
	2 z^4 \left(2 \bar{\nu} \left(10 f^{(1,2)}+\bar{f} ^{(1,2)}\right)-2 \bar{f} ^{(0,4)}-11 \alpha ^2 \bar{\nu}\right)+2 z^3 \left(6 \alpha  \bar{f} ^{(1,2)}+2 i \left(\bar{g} ^{(0,4)}-2 \bar{f} ^{(1,3)}\right)+75 |\nu|^2\right).
\]
Equating the coefficients of powers of $z$ to be zero, we have

\begin{align}\label{eq:g4}
	\bar{g} ^{(0,4)} &= 3i \alpha  \bar{f} ^{(1,2)} + 2\bar{f} ^{(1,3)} + \frac{75i}{2} |\nu|^2,\\
	\bar{f} ^{(0,4)} &=  \bar{\nu} f^{(1,2)} + 10\bar{\nu} \bar{f} ^{(1,2)}-\frac{11}{2} \bar{\nu} \alpha ^2.
\end{align}
On the other hand, substituting $g^{(0,3)}$ and $g^{(0,4)}$ into \cref{lem:cond4A} we find that
\begin{equation}\label{eq:p221}
\phi ^{(2,2)} = \frac{1}{3} \left(10 \alpha  f^{(1,2)}-8 \alpha  \bar{f} ^{(1,2)}-4 i f^{(1,3)}-\alpha ^3+39   |\nu|^2\right).
\end{equation}
A ``reflection identity'' for $\phi^{(2,2)}$ is used in the following lemma.
\begin{lem}\label{lem:f12f13}
	Assume that $\lambda = 0$, then 
	\begin{equation}\label{eq:360}
		3 \alpha  \left(f^{(1,2)}+\bar{f} ^{(1,2)}\right)+2 i \left(f^{(1,3)}-\bar{f} ^{(1,3)}\right)+123 |\nu|^2 =0.
	\end{equation}
Moreover
\begin{align*}
\phi_{ww}\bigl|_\Sigma = & 6\alpha \nu z + \frac{1}{6}(285 |\nu|^2- \alpha^3 + 16 \alpha f^{(1,2)} -2 \alpha \bar f^{(1,2)}-4 i \bar f^{(1,3)}) z^2   \\
& - \frac{1}{3} \bar \nu (5 \alpha^2 - 68 f^{(1,2)}+46 \bar f^{(1,2)}) z^3 + \frac{25}{2} \alpha \bar \nu^2 z^4 - 9 \bar \nu^3 z^5.
\end{align*}
\end{lem}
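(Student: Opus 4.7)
The plan is to extend one step further the derivative-based procedure already used to produce $\phi_w|_{\Sigma}$; this yields a polynomial expression for $\phi_{ww}|_{\Sigma}$, and the reflection identity \cref{eq:360} then drops out by comparing its $z^2$-coefficient with the value of $\phi^{(2,2)}$ already obtained in \cref{eq:p221}.

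Concretely, I would apply the differential operators $\partial_{\zba}^{j}\partial_{\wba}^{2-j}\partial_w^{2}$ for $j=0,1,2$ to the mapping equation \cref{eq:mapeq} and evaluate at $w=\zba=\wba=0$. As in the lower-order case, the resulting $3\times 3$ linear system for $r_{ww}, s_{ww}, \phi_{ww}$ along $\Sigma$ has the same coefficient matrix
\[
D_3 = \begin{pmatrix} 0 & 4iz & 1 \\ -z & 1 & 0 \\ i & 0 & 0 \end{pmatrix}
\]
(since $\lambda=0$), and its right-hand side depends only on quantities already computed in the preceding lemmas: the Segre-set values of $r, s, \phi, r_w, s_w, \phi_w$, the auxiliary functions $p, t, q, p_w, t_w, q_w$ along $\Sigma$, and the jets of $H$ up to fourth order gathered in \cref{lem:3jetcase1,lem:cond4A,lem:jet34mix}. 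Multiplying by the explicit inverse $D_3^{-1}$ computed earlier and collecting powers of $z$ produces the stated polynomial expression for $\phi_{ww}|_{\Sigma}$.

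The $z^2$-coefficient of this expression is $\tfrac{1}{2}\phi^{(2,2)}$, which yields
\[
\phi^{(2,2)} = \tfrac{1}{3}\bigl(285|\nu|^2 - \alpha^3 + 16\alpha f^{(1,2)} - 2\alpha \bar{f}^{(1,2)} - 4i \bar{f}^{(1,3)}\bigr).
\]
On the other hand, \cref{eq:p221}, derived from the Kronecker--Capelli solvability of the $(l,m,n)$ system after substitution of $\bar g^{(0,4)}$ from \cref{eq:g4}, reads
\[
\phi^{(2,2)} = \tfrac{1}{3}\bigl(10\alpha f^{(1,2)} - 8\alpha \bar{f}^{(1,2)} - 4i f^{(1,3)} - \alpha^3 + 39|\nu|^2\bigr).
\]
Equating the two expressions, cancelling the common $-\alpha^3/3$, and dividing by $2$ yields exactly \cref{eq:360}.

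The main obstacle is purely computational bookkeeping: the right-hand sides of the three equations arising from $\partial_{\zba}^{j}\partial_{\wba}^{2-j}\partial_w^{2}$ involve many products coming from the terms $f\fba$, $f^2\pba$, $\fba^2\phi$ and $|\phi|^2(g-\gba)$ in the mapping equation, which expand into sizable polynomial expressions once the lower-order Segre data and the 3- and 4-jet relations are substituted. This is computational rather than conceptual, and, as in the rest of this section, the calculation is most safely done with a computer algebra system; no new ideas are needed beyond those already present in the proofs of the preceding lemmas.
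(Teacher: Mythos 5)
Your proposal is correct and follows essentially the same route as the paper: apply $\partial_{\zba}^{j}\partial_{\wba}^{2-j}\partial_w^{2}$, $j=0,1,2$, to the mapping equation, solve the resulting $3\times 3$ system (same coefficient matrix as before, with the already-computed Segre data, $p_w,t_w,q_w$, and lower-order jets on the right-hand side) for $\phi_{ww}|_{\Sigma}$, and then read off $\phi^{(2,2)}$ from the $z^2$-coefficient and compare it with \eqref{eq:p221} to obtain \eqref{eq:360}. This is precisely the paper's argument, with the same computational bookkeeping delegated to a computer algebra system.
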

\begin{proof}
	With all the identities above, we can solve for $p_w, t_w$ and $q_w$. Precisely
	\begin{align*}
		p_w & = 6 i |\nu|^2 z^2,\\
		t_w & = \frac{1}{2} \bar{\nu} z^2 \left(2 f^{(1,2)}-4 \bar{f} ^{(1,2)}+\alpha ^2\right)+\frac{13 |\nu|^2 z}{4},\\
		q_w & = -6 i \bar{\nu}^3 z^3+7 i \alpha  \bar{\nu}^2 z^2+\frac{1}{12} i \bar{\nu} z \left(20 f^{(1,2)}+2 \bar{f} ^{(1,2)}+\alpha ^2\right)-\\
		& \quad \frac{1}{24} i \left(-6 \alpha  \bar{f} ^{(1,2)}+4 i \bar{f} ^{(1,3)}-75 | \nu|^2\right).
	\end{align*}
Applying $\partial_w^2\partial_{\wba}^j\partial_{\zba}^{2-j}$, $j=0,1,2$, we obtain a system of the form:
\[
\begin{pmatrix}
	0 & 4 i z & 1 \\
	-z & 1 & 0 \\
	i & 0 & 0  \\
\end{pmatrix}
\cdot
\begin{pmatrix}
	r_{ww}\\
	s_{ww}\\
	\phi_{ww}
\end{pmatrix}
=
i \begin{pmatrix}
	p_w\\
	t_w\\
	q_w
\end{pmatrix} + 
\begin{pmatrix}
V_1\\
V_2\\
V_3
\end{pmatrix},
\]
where 
\begin{align*}
	V_1 & = -2 \alpha  z f_{ww}+ \alpha  z g_{ww}(4s-i \alpha z)+2 i \alpha  \phi_w+\frac{25}{2} \alpha  \bar{\nu}^2 z^4+5 \alpha ^2 \bar{\nu} z^3+\frac{\alpha ^3 z^2}{2}, \\
	V_2 & = \frac{1}{2} i \alpha  f_{ww}-3 i \bar{\nu} z f_{ww}-\frac{3}{2} \bar{\nu} g_{ww} \phi-3 \bar{\nu} \phi_w+\frac{75}{4} i \bar{\nu}^3 z^4+\frac{15}{2} i \alpha  \bar{\nu}^2 z^3+\frac{3}{4} i \alpha ^2 \bar{\nu} z^2,\\
	V_3 & = -2 \bar{\nu} f_{ww}.
\end{align*}
Solving for $\phi_{ww}$ we find that
\[
	\phi^{(2,2)} = \frac{1}{3} \left(16 \alpha  f^{(1,2)}-2 \alpha  \bar{f} ^{(1,2)}-4 i \bar{f} ^{(1,3)}-\alpha ^3+285 |\nu|^2\right).
\]
Compare this with formula \eqref{eq:p221} for $\phi^{(2,2)}$ as above, we are done.
\end{proof}
\begin{lem}\label{lem:holeq2}
	If $\lambda = 0$, then $\nu = \mu = \sigma = 0$. The components $f, \phi$, and $g$ of the map satisfy the following three holomorphic  functional equations:
	\begin{equation}\label{eq1}
	-4 w z f +i w^2 \phi+4 z^2 g +\alpha  w^2 \Upsilon= 0.
	\end{equation}
	\begin{equation}\label{eq:hol2}
	6 z (\alpha  w-2 i) f-w \phi + 12 i z^2 + \left(6 i \alpha w- w^2 \left(2f^{(1,2)}-4 \bar{f} ^{(1,2)}+ \alpha ^2\right)\right)\Upsilon = 0,
	\end{equation}
	\begin{equation}\label{eq:hol3}
	A_0(z,w) f(z,w) + B_0(z,w) \phi(z,w) + C_0(z,w) \Upsilon(z,w)=0,
	\end{equation}
	for
	\begin{align}
	A_0(z,w) 
	&=
	24 w z \bar{f} ^{(1,2)} + 24 i \alpha  z, \\
	B_0(z,w) 
	& =
	4 i w^2 \bar{f} ^{(1,2)} -8 i w^2 f^{(1,2)} - i \alpha ^2 w^2 -12 i,\\
	C_0(z,w)
	& =
	w^2 \left(2 \alpha  f^{(1,2)} -4i f^{(1,3)} -16 \alpha  \bar{f} ^{(1,2)} + \alpha ^3\right)\notag\\
	&\quad -8iw \left(2 f^{(1,2)} -16 \bar{f} ^{(1,2)}+\alpha ^2\right)-12 \alpha.
	\end{align}
\end{lem}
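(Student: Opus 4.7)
My strategy is to produce, in addition to the equation \eqref{eq:hol1b} of \cref{lem:49}, two further holomorphic functional equations, and then to show that consistency of all three forces $\nu=0$; the clean equations \eqref{eq1}, \eqref{eq:hol2}, \eqref{eq:hol3} then arise by substitution. Recall that by \cref{lem:49,lem:310,lem:311}, in the case $\lambda=0$ we already have $\sigma=0$ and $\mu=-3i\nu$, so the remaining task is to force $\nu=0$.

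To produce the additional equations, I apply the operators $\partial_{\wba}$ and $\partial_{\wba}^2$ to the parametrized mapping equation \eqref{eq:mappara} and evaluate at $\wba=0$. The first derivative uses the $\wba$-jets $\fba_{\wba}(\zba,0), \pba_{\wba}(\zba,0), \gba_{\wba}(\zba,0)$, which are the conjugates of the explicit formulas for $f_w, \phi_w, g_w$ along $\Sigma$ established in \cref{lem:fwwgwww2}; the second derivative additionally requires $\fba_{\wba\wba}, \pba_{\wba\wba}, \gba_{\wba\wba}$ at $\wba=0$ supplied by \cref{lem:fwwgwww2,lem:f12f13}. After the substitution $\zba = -iw/(2z)$ and clearing denominators, the minimality argument already used in \cref{lem:49} (via \cite[Proposition~5.3.5]{baouendi1999real}) promotes the resulting rational expressions to identities of formal power series in $(z,w)$.

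To force $\nu=0$, I evaluate the three equations at $z=0$. Equation \eqref{eq:hol1b} with $\mu=-3i\nu$ and $\sigma=0$ reduces at $z=0$ to $-iw^3\nu\,\Upsilon(0,w)=0$, so either $\nu=0$ or $\Upsilon(0,w)\equiv 0$. Assuming the latter, the relation $g(0,w)\phi(0,w)+if(0,w)^2=0$ together with the normal form \eqref{eq:normalform} and the jets determined in \cref{lem:jet34mix,lem:f12f13} expresses $\phi(0,w)$ rationally in terms of $f(0,w)$ and $g(0,w)$. Substituting into the $z=0$ specializations of the two new equations and matching lowest-order Taylor coefficients produces an identity of the form $\nu^{k} P = 0$, where $P$ is a nonvanishing polynomial in the already-determined jet parameters; this yields $\nu=0$, hence $\mu=\sigma=0$.

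With $\nu=\mu=\sigma=0$, every $\nu$-dependent summand in the three raw equations vanishes. Equation \eqref{eq:hol1b} becomes $4z^3 g - 4z^2 wf + iw^2 z\phi + \alpha zw^2 \Upsilon = 0$ and, dividing by the common factor $z$, yields \eqref{eq1}. The reductions of the $\partial_{\wba}$- and $\partial_{\wba}^2$-equations give \eqref{eq:hol2} and \eqref{eq:hol3} respectively, with coefficients expressed in terms of the third-order jets $f^{(1,2)}, \bar{f}^{(1,2)}, f^{(1,3)}$ identified in \cref{lem:f12f13}. The main obstacle is bookkeeping in the forcing step: the raw $\partial_{\wba}$- and $\partial_{\wba}^2$-equations contain many $\nu$-dependent terms of high degree in $w$ coupled through the constraint $\Upsilon(0,w)\equiv 0$, and isolating the correct leading coefficient that witnesses $\nu\ne 0$ as an obstruction to holomorphy is a computation best handled by computer algebra, consistently with the remark on computational complexity in the introduction.
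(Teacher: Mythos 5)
Your overall skeleton (supplement \eqref{eq:hol1b} by two more equations obtained from tangential derivatives of the complexified mapping equation, pass to holomorphic identities via $\zba=w/(2iz)$ and minimality, force $\nu=0$, then specialize) is the paper's strategy, but two of your concrete steps do not work as stated. First, the operators you apply are the wrong ones: differentiating the parametrized equation \eqref{eq:mappara} with respect to $\wba$ (at fixed $z,\zba$) also differentiates the \emph{unbarred} map through the argument $w=\wba+2iz\zba$, so after setting $\wba=0$ your identities contain $f_w,\phi_w,g_w$ (and, for $\partial_{\wba}^2$, second derivatives) evaluated along the \emph{second} Segre set $w=2iz\zba$. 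These are unknown functions; \cref{lem:fwwgwww2} only gives them along $\Sigma=\{w=0\}$, so your two new relations are not functional equations in $f,\phi,g$ alone and cannot reduce to \eqref{eq:hol2}--\eqref{eq:hol3}, which contain no derivatives of the map. The derivative that does the job is the tangential field that hits only the barred map (in the parametrization, $\partial_{\zba}-2iz\,\partial_{\wba}$, i.e.\ the CR derivative used in the proof of \cref{lem:f03} and again, twice, to get \eqref{eq:hol30}); then only the known jets $\fba_{\zba},\fba_{\wba},\pba_{\wba},\dots$ along $\bar\Sigma$ enter and $H$ appears undifferentiated.

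Second, your mechanism for forcing $\nu=0$ is too weak. Evaluating the equations at $z=0$ only reproduces the dichotomy you already have from \eqref{eq:hol1b}: in the twice-differentiated equation \eqref{eq:hol30} every coefficient of $f$ and $\phi$ vanishes at $z=0$ and the only surviving term is $27i\nu^3w^5\,\Upsilon(0,w)$, so under the hypothesis $\Upsilon(0,w)\equiv 0$ the $z=0$ specializations are identically satisfied and "matching lowest-order Taylor coefficients'' yields no relation $\nu^kP=0$; this claim is asserted but not available at that level. The paper's actual argument is finer: solve \eqref{eq:hol1b} for $g$, substitute the resulting rational expression $\Upsilon=-z(2zf-iw\phi)^2/(\nu w^3\phi+i\alpha w^2z\phi+4iz^3)$ into \eqref{eq:hol30}, clear the denominator, divide by the common factor $z$, and only then set $z=0$; this isolates the coefficient of $z$ in $B$ and the $z$-free term of $C$ and gives $108\,i\,\nu^3w^7\phi(0,w)^2=0$, after which the residual case $\phi(0,w)\equiv0$ is excluded via $\phi^{(0,3)}=-6i\nu^2$. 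Without this elimination-and-next-order step (or an equivalent one), your proof of $\nu=0$ has a genuine gap, and consequently so does the derivation of the clean equations \eqref{eq1}--\eqref{eq:hol3}.
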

\begin{proof}
	Using the formula for $f_w, \phi_w$, and $g_w$ along $\Sigma$, we can produce another holomorphic equation for $f, \phi$, and $g$ by differentiating the mapping equation along the CR vector field. Indeed, we have
	\[
	L\left(L\left(\vr (H(z,\wba + 2i |z|^2), \overline{H}(\zba,\wba))\right)\right)\biggl|_{\wba=0} = 0.
	\]
	Expanding the derivatives and substituting the formulas above, we have
	\begin{equation}\label{eq:hol30}
		0
		=
		A(z,w) f(z,w) + B(z,w) \phi(z,w) + C(z,w) \Upsilon(z,w)  -192 \nu  w z^4,
	\end{equation}
	where, by direct calculation,
	\begin{align}
		A(z,w) 
		&=
		96 w z^4 \bar{f} ^{(1,2)} +360 \nu ^2 w^3 z^2+120 i \alpha  \nu  w^2 z^3	\notag \\
		& \quad +480 \nu  w z^3+384 i \bar{\nu} z^5+96 i \alpha  z^4, \\
		B(z,w) 
		& =
		16 i w^2 z^3 \bar{f} ^{(1,2)} -32 i w^2 z^3 f^{(1,2)} -135 i \nu ^2 w^4 z+24 \alpha  \nu  w^3 z^2 \notag \\
		& \quad -4 i \alpha ^2 w^2 z^3-216 i \nu  w^2 z^2-96 \bar{\nu} w z^4-48 i z^3,\\
		C(z,w)
		& =
		4w^2 z^3 \left(2 \alpha  f^{(1,2)} -4i f^{(1,3)} -16 \alpha  \bar{f} ^{(1,2)} + \alpha ^3-285 |\nu|^2 \right)\notag\\
		&	\quad -4iw^3 z^2 \left(46 \nu  f^{(1,2)}-68\nu  \bar{f} ^{(1,2)} +5\alpha ^2 \nu \right) \notag \\
		&\quad -16iw z^3 \left(4 f^{(1,2)} -32 \bar{f} ^{(1,2)}+2\alpha ^2\right)-48 \alpha  z^3 -144 \alpha  \nu  w^2 z^2+144 i \nu  w z^2\notag \\
		&\quad +75 \alpha  \nu ^2 w^4 z+288 i \nu ^2 w^3 z+27 i \nu ^3 w^5-288 i \alpha  \bar{\nu} w z^4-288 \bar{\nu} z^4.
	\end{align}
	On the other hand, from \cref{eq:hol1b}, we have
	\[
	g(z,w)= \frac{w \left(4 i z^2 f(z,w)+w f(z,w)^2 (\alpha  z-i \nu  w)+w z \phi (z,w)\right)}{w^2 \phi (z,w) (\nu  w+i \alpha  z)+4 i z^3}.
	\]
	Substituting this into $\Upsilon$, we have
	\[
	\Upsilon(z,w) = \frac{-z(2 z f(z,w)-i w \phi (z,w))^2}{\nu  w^3 \phi (z,w)+i \alpha  w^2 z \phi (z,w)+4 i z^3}.
	\]
	Plugging this into \cref{eq:hol30}, cancelling the nonzero term $z/(\nu  w^3 \phi (z,w)+i \alpha  w^2 z \phi (z,w)+4 i z^3)$, and setting $z=0$, we obtain
	\[
	108 i \nu ^3 w^7 \phi (0,w)^2 = 0.
	\]
	(In this step, the only term of degree 1 in $z$ in $B(z,w)$ and the term containing no $z$ in $C(z,w)$ are important.)
	Thus, either $\nu = 0$ or $\phi (0,w) = 0$. But in the latter case, we have $0=\phi^{(0,3)} = -6i \nu^2$ and hence $\nu=0$, as desired.
	
	Plugging these into \eqref{eq:hol1b} and cancelling $z$ we obtain \eqref{eq1}. Similarly, we get \eqref{eq:hol2} from \cref{lem:f03} and the third equation \eqref{eq:hol3} from \eqref{eq:hol30}. The proof is complete.
\end{proof}
From now on, we use the fact that $\nu = \mu = \sigma = 0$. Then, collecting from above,
\begin{align*}
	H(z,0) & = (z,\alpha z^2, 0),\\
	H_w(z,0) & = \left(\frac{i \alpha  z}{2}, \frac{1}{3} i z^2 \left(2 \bar{f} ^{(1,2)}-4f ^{(1,2)}+ \alpha ^2\right), 1\right),\\
	H_{ww}(z,0) & =\left(f^{(1,2)}z, -\frac{1}{6}\left(\alpha^3 - 16 \alpha f^{(1,2)} +2 \alpha \bar f^{(1,2)}+4 i \bar f^{(1,3)} \right) z^2, 0\right).
\end{align*}

For each triple of parameters $(\alpha,f^{(1,2)},f^{(1,3)})\in \mathbb{R}\times \mathbb{C}\times \mathbb{C}$ satisfying  \eqref{eq:360} with $\nu = 0$, the system of holomorphic functional equations \eqref{eq1}, \eqref{eq:hol2}, and \eqref{eq:hol3} above has a unique algebraic solution, which may have a singularity at the origin. Even in the case the solution is holomorphic near the origin, the resulting map does not necessarily send $\heis$ into $\lc$, see \cref{ex:holEqUpTo14} below. In the next step, we determine all the triples of parameters $\left(\alpha, f^{(1,2)}, f^{(1,3)}\right)$ such that the solution of this system is a genuine map sending $\heis$ into $\lc$. We write $f^{(1,2)} = \eta + i\xi$, with $\eta,\xi \in \mathbb{R}$. From \cref{lem:f12f13}, we can write $f^{(1,3)} = \gamma + (3i/2) \alpha \eta$ with $\gamma \in \mathbb R$. The mapping equation \cref{eq:mappara} gives the following constrains on these parameters.
\begin{lem}\label{lem:remeq}
\begin{equation}\label{eq:remeq1}
	\alpha^5 - 4 \alpha^3 \eta + 4 \alpha \left(90 \xi ^2 + \eta^2 \right) - 72 \gamma \xi  = 0,
\end{equation}
\begin{equation}\label{eq:remeq2}
	23 \alpha ^4 \xi  -4 \alpha ^3 \gamma -56 \alpha ^2 \xi   \eta +8 \alpha  \gamma  \eta -1116 \xi  ^3+20 \xi   \eta ^2=0,
\end{equation}
\begin{equation}\label{eq:remeq3}
	7 \alpha ^6-240 \alpha ^4 \eta +2304 \alpha ^2 \left(142 \xi  ^2+\eta ^2\right)-290304 \alpha  \gamma  \xi  +62208 \gamma^2-4096 \left(333 \xi  ^2 \eta +\eta ^3\right) =0.
\end{equation}
\end{lem}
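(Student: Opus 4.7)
\medskip

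\noindent\textbf{Proof proposal.} The plan is to treat the three holomorphic functional equations \eqref{eq1}, \eqref{eq:hol2}, \eqref{eq:hol3} together with the full (real) mapping equation \eqref{eq:mapeq} as a system for the Taylor coefficients of $(f,\phi,g)$, and to extract constraints on the remaining real parameters $\alpha,\eta,\xi,\gamma$ (where $f^{(1,2)}=\eta+i\xi$ and, by \cref{lem:f12f13}, $f^{(1,3)}=\gamma+\tfrac{3i}{2}\alpha\eta$ with $\gamma\in\RR$). The three equations \eqref{eq:remeq1}--\eqref{eq:remeq3} must then arise as the lowest-order solvability conditions which cannot be absorbed into the conformal factor $Q$.

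First I would use \eqref{eq1}--\eqref{eq:hol3} to determine the formal power series of $f,\phi,g$ to sufficiently high weighted order (assigning weight $1$ to $z$ and weight $2$ to $w$, following the standard scaling for $\heis$ and $\lc$). The linearization at the origin of this $3\times 3$ system in the unknowns $(f,\phi,\Upsilon)$ has nonzero determinant on a Zariski-open set, so one can solve recursively coefficient by coefficient; the free data are exactly the four real parameters above, since all lower-order coefficients have already been pinned down in \cref{lem:first,lem:3jetcase1,lem:fwwgwww2,lem:jet34mix,lem:f12f13}. At this stage every Taylor coefficient of $f,\phi,g$ becomes an explicit polynomial in $\alpha,\eta,\xi,\gamma$.

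Next I would substitute the resulting series into \eqref{eq:mapeq}, parametrize the right-hand side by the Segre map $w=\wba+2i z\zba$ as in \eqref{eq:mappara}, and collect coefficients of the resulting formal series in $(z,\zba,\wba)$. Equivalently, this amounts to writing
\[
\widetilde\vr\bigl(H(z,\wba+2iz\zba),\overline{H}(\zba,\wba)\bigr)=0
\]
as an identity in $\CC[[z,\zba,\wba]]$, and requiring that each coefficient vanish. Since the three holomorphic equations have already absorbed all holomorphic information along the first and second Segre sets, the surviving relations lie in genuinely mixed weights and depend only on $\alpha,\eta,\xi,\gamma$. I expect \eqref{eq:remeq1} and \eqref{eq:remeq2} to appear first at a moderate weighted order (consistent with Faran-type obstructions), and \eqref{eq:remeq3} to require pushing to weight roughly $14$--$16$, in line with the authors' remark about \cref{ex:holEqUpTo14} and the need to carry the expansion to weighted order $16$.

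The main obstacle is purely computational: while each individual step is an elementary polynomial manipulation, the recursive determination of Taylor coefficients from the three algebraic equations and the subsequent reduction modulo $\vr_{\heis}$ produces very large expressions, making the use of a computer algebra system essentially mandatory. A conceptual subtlety is that one must verify that at each weighted order the non-absorbable part of the mapping equation lies in a polynomial ideal generated precisely by \eqref{eq:remeq1}--\eqref{eq:remeq3}; in particular, higher-weight obstructions must be automatic consequences of these three (and of the already established lower-order identities), which is precisely what one would check order by order up to the promised weight $16$ to be confident the list is complete.
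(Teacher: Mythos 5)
Your strategy is sound and lands in the same general framework as the paper — a weighted expansion (weight $1$ for $z$, $2$ for $w$) of the mapping equation \eqref{eq:mapeq}, reducing everything to polynomial constraints on the four residual real parameters — but the route is genuinely different in one respect. The paper does \emph{not} pre-solve the functional equations \eqref{eq1}, \eqref{eq:hol2}, \eqref{eq:hol3} to generate the Taylor coefficients. Instead, after the auxiliary fact $f(0,w)=\phi(0,w)=0$ (\cref{lem:f0w}) it proves a structural lemma (for $j\geqslant 2$: $g_{2j+1}=f_{2j}=\phi_{2j-1}=0$ and $g_{2j}=g_{2j,j}w^j$ with $g_{2j,j}$ real), and then at each even weight treats the new coefficients ($f_{9,4},\phi_{8,3},g_{10,5}$ at weight $10$; $f_{11,5},\phi_{10,4},g_{12,6}$ at weight $12$; $f_{13,6},\phi_{12,5},g_{14,7}$ at weight $14$) as unknowns of the overdetermined linear systems furnished by the mapping equation itself; the leftover compatibility relations are exactly \eqref{eq:remeq1} (weight $12$) and \eqref{eq:remeq2}--\eqref{eq:remeq3} (weight $14$). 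So your expectation that \eqref{eq:remeq3} needs weight $14$--$16$ is slightly off: weight $14$ suffices for this lemma, and weight $16$ enters only later, in the $\alpha=0$ branch, to pin down the last parameter. Your alternative of first solving \eqref{eq1}--\eqref{eq:hol3} for the full series and then substituting into \eqref{eq:mapeq} would also work, but the step you describe as routine recursion needs more care than a "nonzero determinant of the linearization": solving \eqref{eq1} for $g$ requires divisibility of the right-hand side by $z^2$, solving \eqref{eq:hol2} for $\phi$ requires divisibility by $w$, and the unique algebraic solution may be singular at the origin (the paper says as much), so the cleanest justification is that you are expanding an actual CR map, whose coefficients exist and are forced by the functional equations together with the known $4$-jet data. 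The paper's route avoids this issue entirely by letting the mapping equation determine the unknown coefficients, at the cost of the same kind of bulk computation you anticipate; what your route buys is a conceptually cleaner separation between "holomorphic" data (fixed by \cref{lem:holeq2}) and the purely real obstructions, which is consistent with \cref{ex:holEqUpTo14}. Finally, note that for the lemma as stated you only need the three identities as necessary conditions, so your closing concern about higher-weight obstructions lying in the ideal they generate is not required here (that completeness question is what the rest of the paper's Case 2 analysis, through \cref{lem:solAlpha} and the weight-$16$ step, settles).
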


The following will be useful for the proof of \cref{lem:remeq}.

\begin{lem}\label{lem:f0w} If $\lambda = 0$, then 
	\[
	f(0,w) = \phi(0,w) = 0, \quad \phi_z(0,w) = 0.
	\]
\end{lem}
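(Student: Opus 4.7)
The plan is to deduce the three claims essentially in a single pass from the first holomorphic functional equation \eqref{eq1} together with the mapping equation restricted to $z = \bar z = 0$.

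First, I would set $z = 0$ in \eqref{eq1}. Since we have already established $g(0,w) = 0$, the function $\Upsilon = g\phi + if^2$ reduces along $z = 0$ to $\Upsilon(0,w) = i f(0,w)^2$, and \eqref{eq1} becomes $iw^2\phi(0,w) + i\alpha w^2 f(0,w)^2 = 0$. Cancelling $w^2$ in $\CC[[w]]$ yields the key relation
\[ \phi(0,w) = -\alpha\, f(0,w)^2. \]

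Second, I would feed this back into the parametrized mapping equation \eqref{eq:mappara} restricted to $z = \bar z = 0$. On that subset the substitution $w = \bar w + 2iz\bar z$ gives $w = \bar w$, and using $g(0,w) = \bar g(0,w) = 0$ the mapping equation collapses to the purely algebraic identity (of formal power series in $w$)
\[ 2 f(0,w)\bar f(0,w) + f(0,w)^2 \bar\phi(0,w) + \bar f(0,w)^2 \phi(0,w) = 0. \]
Substituting the relation from the previous step together with its conjugate (note $\alpha \in \RR$, so $\bar\phi(0,w) = -\alpha \bar f(0,w)^2$), this collapses to
\[ 2 f(0,w) \bar f(0,w) \bigl(1 - \alpha f(0,w)\bar f(0,w)\bigr) = 0. \]
Since $f(0,0) = 0$ by the normal form, the factor $1 - \alpha f(0,w)\bar f(0,w)$ has constant term $1$ and is a unit in $\CC[[w]]$. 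Because $\CC[[w]]$ is an integral domain, $f(0,w)\bar f(0,w) = 0$ forces $f(0,w) = 0$. Substituting back yields $\phi(0,w) = 0$.

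Third, to obtain $\phi_z(0,w) = 0$, I would differentiate \eqref{eq1} once with respect to $z$ and then set $z = 0$. The terms $4z^2 g$ and $-4wz f$ contribute $0$ and $-4w f(0,w) = 0$ respectively, leaving
\[ iw^2 \phi_z(0,w) + \alpha w^2 \Upsilon_z(0,w) = 0. \]
But $\Upsilon_z = g_z \phi + g \phi_z + 2i f f_z$, and each summand contains one of $f(0,w), \phi(0,w), g(0,w)$ as a factor after evaluating at $z = 0$, all of which we have just shown vanish. Hence $\Upsilon_z(0,w) = 0$ and cancelling $w^2$ gives $\phi_z(0,w) = 0$.

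There is no real obstacle here once \eqref{eq1} is in hand; the only subtlety is the formal-power-series bookkeeping (the integral domain argument and the invertibility of the unit $1 - \alpha f \bar f$), both of which are immediate from $f(0,0) = 0$, guaranteed by the normal form together with the earlier vanishings $\nu = \mu = \sigma = 0$.
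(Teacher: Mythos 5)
Your argument hinges on the claim that $g(0,w)=0$ has ``already been established,'' and this is where it breaks down. What is known at this stage is $g(z,0)=0$, i.e.\ vanishing along the first Segre variety $\Sigma=\{w=0\}$, not along $\{z=0\}$: from $g=wQ(z,w,0,0)$ and the normalization $g^{(0,1)}=1$ one has $g(0,w)=w+O(w^2)\neq 0$, and indeed the maps that ultimately occur in this case satisfy $g(0,w)=w/(1-\beta^2w^2)\neq 0$. Consequently your Step 1 only yields the weaker relation $\phi(0,w)=-\alpha f(0,w)^2/\bigl(1-i\alpha g(0,w)\bigr)$ (which is exactly what the paper extracts from \eqref{eq1}), and, more seriously, Step 2 is invalid: restricting \eqref{eq:mappara} to $z=\zba=0$ leaves the term $\bigl(1-\phi(0,\wba)\pba(0,\wba)\bigr)\bigl(g(0,\wba)-\gba(0,\wba)\bigr)$, which does not vanish a priori (that $g(0,w)$ has real coefficients is only known \emph{after} the lemma is proved). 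Hence the identity $2f\fba+f^2\pba+\fba^2\phi=0$ at $(0,\wba)$, and with it the key factorization $2f\fba(1-\alpha f\fba)=0$, is not available. If one carries the computation out correctly, \eqref{eq1} plus the mapping equation at $z=\zba=0$ only give a relation of the shape $(1-i\alpha g)(1+i\alpha \gba)(g-\gba)=2if\fba\bigl((1-i\alpha g)(1+i\alpha\gba)-\alpha f\fba\bigr)$ along $\{z=0\}$, which couples $f(0,w)$ to the imaginary parts of the coefficients of $g(0,w)$ but does not force $f(0,w)=0$.

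This is precisely why the paper's proof needs more input than \eqref{eq1}: it also sets $z=0$ in the other two functional equations of \cref{lem:holeq2}. From \eqref{eq:hol2} one gets $w^2f(0,w)^2\bigl(2f^{(1,2)}-4\bar f^{(1,2)}+\alpha^2\bigr)=0$ and from \eqref{eq:hol3} a similar relation, so that $f(0,w)\neq 0$ would pin down $f^{(1,2)}=\alpha^2/2$ and $f^{(1,3)}=3i\alpha^3/4$; for these parameter values the solution of the system turns out to satisfy $f(0,w)=0$ anyway, whence $f(0,w)=0$ in all cases and then $\phi(0,w)=0$. Your treatment of the last claim has the same flaw but is repairable: differentiating \eqref{eq1} in $z$, setting $z=0$ and using $f(0,w)=\phi(0,w)=0$ gives $w^2\phi_z(0,w)\bigl(i+\alpha g(0,w)\bigr)=0$ (note $\Upsilon_z(0,w)=g(0,w)\phi_z(0,w)$, which is \emph{not} zero term by term), and since $i+\alpha g(0,w)$ is a unit in $\CC[[w]]$ one concludes $\phi_z(0,w)=0$. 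So the third assertion goes through with a small fix, but the proof of $f(0,w)=\phi(0,w)=0$ cannot be completed along the route you propose without invoking \eqref{eq:hol2} and \eqref{eq:hol3}.
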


\begin{proof}
Setting $z=0$ in the first holomorphic functional equation \eqref{eq1}, and solving for $\phi(0,w)$ we have
\[
\phi (0,w)= -\frac{ \alpha  f(0,w)^2}{1-i\alpha  g(0,w)}.
\]
Setting $z=0$ into the second holomorphic functional equation \eqref{eq:hol2} and plugging in $\phi(0,w)$ formula above we have
\[
w^2 f(0,w)^2 \left(2 f^{(1,2)}-4 \bar{f}^{(1,2)}+\alpha ^2\right) = 0.
\]
Assume, for contradiction, that $f(0,w)\ne 0$, then $f^{(1,2)} = \alpha^2/2$. 

Setting $z=0$ into the third holomorphic functional equation \eqref{eq:hol3} and plugging in $\phi(0,w)$ formula above we have
\[
-2 w^2 f(0,w)^2 \left(5 \alpha  f^{(1,2)}-2 i  f^{(1,3)}-10 \alpha   \bar{f}^{(1,2)}+\alpha ^3 \right) = 0.
\]
Solving for $f^{(1,3)}$ we have 
\[
f^{(1,3)} = \frac{3 i \alpha ^3}{4}.
\]
Plugging these two values into the holomorphic equations, we obtain precisely the map which satisfies $f(0,w) = 0$. Thus, in any case, $f(0,w) = 0$ and hence $\phi(0,w) = 0$. The proof is complete. 
\end{proof}

For the proof of \cref{lem:remeq} we associate weight $1$ to $z$ and weight $2$ to $w$ and consider a weighted homogeneous expansion of $H$, i.e.,
\begin{align*}
f(z,w)  = z + \sum_{i\geq 2}f_i(z,w), \quad 
\phi(z,w) =  \sum_{j\geq 2} \phi_j(z,w), \quad 
g(z,w)  = w+ \sum_{k\geq 3} g_k(z,w),
\end{align*}
where $h_m(z,w)$ is a weighted homogeneous polynomial of order $m\geq 2$. Collecting weighted homogeneous terms of weight $k+1$ for $k\geq 3$ in the mapping equation gives:
\begin{align*}
\text{Re}\bigl(i g_ {k+1}(z,w) +2 \bar z f_k(z,w) +\bar z^2 \phi_{k-1}(z,w)\bigr) + \cdots = 0,
\end{align*}
for $(z,w) \in \heis$. The dots $\cdots$ denote terms, which involve parts of the weighted homogeneous expansion of the map of lower order, which appear for $k\geq 5$. 

The lower weight components are computed above as follows:
\begin{quote}
\begin{multicols}{2}
\noindent
$H_1  = \left(z,0,0\right)$ \\
$H_2  = \left(0,\alpha z^2,w\right)$ \\
$H_3  = \left(\frac{i\alpha}{2}zw,0,0\right)$ \\
$H_4  = \left(0,\frac{1}{2}\phi^{(2,1)} z^2 w,0\right)$ \\
$H_5  = \left(\frac{1}{2} f^{(1,2)}z w^2,0,0\right)$\\
$H_6  = \left(0,\frac{1}{4}\phi^{(2,2)} z^2 w^2,\frac{1}{6} g^{(0,3)} w^3\right)$\\
$H_7  = \left(\frac{1}{6} f^{(1,3)}zw^3,0,0\right),$
\end{multicols}
\end{quote}
\noindent
where $\phi^{(2,1)} = \frac{2i}{3} \left(2 \bar{f} ^{(1,2)}-4f ^{(1,2)}+ \alpha ^2\right)$, $\phi^{(2,2)} = \frac{1}{3} \left(16 \alpha  f^{(1,2)}-2 \alpha  \bar{f} ^{(1,2)}-4 i \bar{f} ^{(1,3)}-\alpha ^3\right)$, and $g^{(0,3)} = \alpha^2/2 +2\Re f ^{(1,2)}$.
\begin{lem}
	For every $j\geqslant 2$, $g_{2j+1} = f_{2j} = \phi_{2j-1}= 0$ and $g_{2j} = g_{2j,j} w^j$ with $g_{2j,j}$ is real.
\end{lem}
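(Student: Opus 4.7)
The plan is to exploit an $S^1$-symmetry in the $z$-variable of $\heis$ and $\lc$, together with a short reality argument on $\{z=0\}$, rather than attempting a direct induction on the weighted mapping equation. The key enabling fact is the uniqueness recorded just after \cref{lem:holeq2}: under the Case 2 normalization ($\lambda=\mu=\nu=\sigma=0$), the map $H$ is determined by the triple $(\alpha, f^{(1,2)}, f^{(1,3)})$ via the holomorphic functional equations \eqref{eq1}, \eqref{eq:hol2}, \eqref{eq:hol3}. Since all three parameters will turn out to be invariant under a natural $S^1$-action, the normalized $H$ is forced to be $S^1$-equivariant, and all the claimed vanishings follow at once; the reality of $g_{2j,j}$ is then an independent consequence of the mapping equation along the real axis of $\heis$.

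Concretely, for $\theta\in\RR$ I would introduce the CR automorphisms $\sigma^M_\theta(z,w) = (e^{i\theta}z, w)$ of $\heis$ and $\sigma^N_\theta(z,\zeta,w) = (e^{i\theta}z, e^{2i\theta}\zeta, w)$ of $\lc$ (the latter is $\psi_{0,0,e^{i\theta},1}$ in the notation of \cref{lem:stab}) and form
\[
H_\theta := \sigma^N_{-\theta}\circ H\circ \sigma^M_\theta = \bigl(e^{-i\theta}f(e^{i\theta}z,w),\ e^{-2i\theta}\phi(e^{i\theta}z,w),\ g(e^{i\theta}z,w)\bigr).
\]
A direct computation shows that $H_\theta$ again satisfies the partial normal form \eqref{eq:normalform} of \cref{thm:normalform} with the same $\alpha$ and with $\lambda=\mu=\nu=\sigma=0$. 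Moreover, Taylor-expanding gives $f^{(p,q)}_\theta = e^{(p-1)i\theta}f^{(p,q)}$, $\phi^{(p,q)}_\theta = e^{(p-2)i\theta}\phi^{(p,q)}$, $g^{(p,q)}_\theta = e^{ip\theta}g^{(p,q)}$, so in particular $f^{(1,2)}_\theta = f^{(1,2)}$ and $f^{(1,3)}_\theta = f^{(1,3)}$. Hence $H$ and $H_\theta$ are normalized CR maps associated to the same triple $(\alpha, f^{(1,2)}, f^{(1,3)})$, and the uniqueness of algebraic solutions of \eqref{eq1}, \eqref{eq:hol2}, \eqref{eq:hol3} forces $H_\theta = H$ for every $\theta$. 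Comparing Taylor coefficients then yields $f^{(p,q)} = 0$ for every $p \neq 1$, $\phi^{(p,q)} = 0$ for every $p \neq 2$, and $g^{(p,q)} = 0$ for every $p \neq 0$.

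For the reality part I would invoke \cref{lem:f0w}, which gives $f(0,w) = \phi(0,w) = 0$; the mapping equation $\Im g\,(1 - |\phi|^2) = |f|^2 + \Re(f^2 \bar\phi)$ restricted to $\heis\cap\{z = 0\}$ then reduces to $\Im g(0, u) = 0$ for $u \in \RR$, so $g(0, w) = \sum_{j\geq 1} g^{(0,j)} w^j/j!$ takes real values on the real axis and hence $g^{(0, j)} \in \RR$ for every $j\geq 1$. To finish, I note that a weight-$N$ monomial $z^p w^q$ satisfies $p + 2q = N$, so $p$ has the same parity as $N$: when $N = 2j+1$ the exponent $p$ is odd (so $p\neq 0,2$), which kills both $g_{2j+1}$ and $\phi_{2j+1}$; when $N = 2j$ the exponent $p$ is even (so $p\neq 1$), which kills $f_{2j}$, and only the $(p,q) = (0,j)$ term survives in $g_{2j}$, giving $g_{2j} = (g^{(0,j)}/j!)\,w^j$ with a real coefficient as desired.

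The step I expect to require the most care is the verification that $H_\theta$ really lies in the Case 2 normal form — i.e.\ that the vanishings $\lambda_\theta=\mu_\theta=\nu_\theta=\sigma_\theta = 0$ and the removed entries $f^{(0,1)}_\theta = f^{(2,0)}_\theta = g^{(2,0)}_\theta = g^{(1,1)}_\theta = g^{(0,2)}_\theta = 0$ are inherited from $H$. This is however immediate from the short table of 2-jet coefficients, since each surviving normal-form entry $f^{(1,0)} = 1$, $f^{(1,1)} = i\alpha/2$, $\phi^{(2,0)} = 2\alpha$, $g^{(0,1)} = 1$ sits at the invariant exponent where $(p-1)$, $(p-2)$ or $p$ equals $0$, while all the suppressed entries are multiplied by nonzero powers of $e^{i\theta}$ times a coefficient that was assumed or proved to vanish. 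With this in hand, the invocation of the uniqueness statement following \cref{lem:holeq2} is essentially free.
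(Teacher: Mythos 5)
Your $S^1$-equivariance idea is attractive, and most of the supporting checks are right: the rotations you use do belong to $\Aut(\heis,0)$ and $\Aut(\lc,0)$, the conjugated map $H_\theta$ is again in the partial normal form \eqref{eq:normalform} with the same $\alpha$ and with $\lambda_\theta=\mu_\theta=\nu_\theta=\sigma_\theta=0$, the parameters $f^{(1,2)},f^{(1,3)}$ are invariant, the reality of $g^{(0,j)}$ via \cref{lem:f0w} and restriction to $\{z=0\}$ is correct, and the parity bookkeeping at the end is fine. The critical step, however, is the identification $H_\theta=H$, and there your argument has a genuine gap. What the paper records after \cref{lem:holeq2} is that, for an admissible parameter triple, the system \eqref{eq1}, \eqref{eq:hol2}, \eqref{eq:hol3} has a unique \emph{algebraic} solution --- an assertion, not a proved lemma, and one about algebraic functions. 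At the point where the present lemma is needed, $H$ and $H_\theta$ are only smooth (or formal) CR maps; algebraicity is part of the conclusion of \cref{thm:main1}, not something you may assume. To conclude $H_\theta=H$ you need uniqueness of the solution within the class of formal power series with the normalized low-order jet, and that statement is neither formulated nor proved at this stage; extracting it from the system is not ``essentially free'' (solving \eqref{eq:hol2} for $\phi$ forces a division by $w$, \eqref{eq1} a division by $z^2$, and after elimination one must still rule out additional formal branches), so the invocation hides exactly the kind of coefficient-by-coefficient work the lemma is meant to carry out.

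By contrast, the paper's own proof is a short self-contained induction on the weighted expansion: restrict the weight-$(2j+1)$ piece of the mapping equation to $z=0$ (treating $z,\bar z$ as independent after complexification), use $f(0,w)=\phi(0,w)=0$ from \cref{lem:f0w} together with weight parity so that only the contribution of $g_{2j+1}$ survives, conclude $g_{2j+1}=0$, then divide by $z$ and repeat to get $f_{2j}=0$ and $\phi_{2j-1}=0$; the same restriction at even weight shows $g_{2j}$ is independent of $z$ with real coefficient. If you supplied a proof of formal uniqueness for the functional system (say by a weight-by-weight argument), your route would in fact give more than the lemma --- full circle-equivariance, i.e. $f=z\,a(w)$, $\phi=z^2 b(w)$, $g=c(w)$ --- which the paper only recovers piecemeal at weights $10$--$16$; but as written the key step rests on an unproved and, for your purposes, too weak uniqueness claim.
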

\begin{proof} Observe that the conclusion holds for $j=0,1,2,3$. We argue by induction as follows. Setting $z = 0$ in the mapping equation of weight $2j+1$, noticing that $g_{2j+1}(0, \wba) = 0$ (as $\wba$ has weight 2), and using \cref{lem:f0w} above ($f(0,w) = \phi(0,w) = 0$), we immediately conclude that $g_{2j+1}$ must vanish. Plugging this back into the mapping equation of weight $2j+1$, the remaining terms are divisible by $z$. Moreover, $f_{2j}$ is divisible by $z^2$ while $f_{2j-2} = 0$ by induction assumption. Dividing the equation by $z$ and setting $z=0$, we obtain $f_{2j} = 0$. Plugging these two into the mapping equation and arguing similarly, we find that $\phi_{2j-1} =0$.
\end{proof}
In particular,
\[
	g_{8} = \frac{1}{24}g^{(0,4)} w^4 = \frac{1}{24}\left(2f ^{(1,3)}-3i \alpha f^{(1,2)}\right) w^4.
\]

Comparing the terms of weight 10 in the mapping equation, we have
\begin{align*}
	0= & ig_{10,5}\left(w^5 - \wba^5\right)  + 2 \sum_{j=0}^4 f_{2j+1} \overline{f}_{9-2j} + 2 \Im \sum_{l=1}^3 \phi_{2l} \sum_{j=1}^{4-l}  \bar \phi_{10-2l-2j} g_{2 j}
	\\ &
	+ \Re \sum_{l=1}^4 \bar{\phi}_{2l} \sum_{j=0}^{4-l} f_{2j+1} f_{9-2l-2j}\biggl|_{w = \wba + 2i z\zba}.
\end{align*}
Observe that in each monomial appearing in the right hand side, the highest degree in $\wba$ is $4$. Collecting the terms $z\zba \wba^4$ appearing in $g_{10}$, $f_1 \fba_9$, $\fba_1 f_9$, and $f_5 \fba_5$, we have
\[
	2 \Re f_{9,4} - 5 g_{10,5} + \alpha \Im f_{7,3} + |f_{5,2}|^2 = 0.
\]
Collecting the terms of kind $z \zba^3 \wba^3$ appearing only in $f_1 \fba_9$ we find that $f_{9,3} = 0$. Thus, $f_9$ is monomial, that is $f_9 = f_{9,4} z w^4$. Collecting terms of kind $z^2 \zba^2 \wba^3$ we have
\begin{align*}
48 f_{9,4} - 120 g_{10,5} - 3 i \Re \phi_{8,3} -10\alpha ^2 \Im f_{5,2} + 24 |f_{5,2}|^2 -16 i  \Im f_{5,2}^2 - 6 i \alpha (3 f_{7,3}-\bar f_{7,3}) =0.
\end{align*}
The coefficient of $z^5 \bar z^5$ gives:
\begin{align*}
4 f_{9,4} - i \phi_{8,3} - 4 g_{10,5}  - 2 i \alpha^2 \Re f_{5,2}=0.
\end{align*}
The coefficient of $z^4 \bar z^4 \bar w$ gives:
\begin{align*}
576 f_{9,4} - 108 i \phi_{8,3} - 720 g_{10,5}  & - \alpha^4 - 128 |f_{5,2}|^2 - 72 i \alpha f_{7,3} \\
& - 104 \alpha^2 f_{5,2} + 100 \alpha^2 \bar f_{5,2} + 32 f_{5,2}^2 + 128 \bar f_{5,2}^2=0.
\end{align*}
And finally, the coefficient of $z^3 \bar z^3 \bar w^2$ gives:
\begin{align*}
864 f_{9,4} - 108 i \phi_{8,3} - 1440 g_{10,5} &  - \alpha^4 - 16 |f_{5,2}|^2 - 216 i \alpha f_{7,3} \\
& - 92 \alpha^2 f_{5,2} + 88 \alpha^2 \bar f_{5,2} - 64 f_{5,2}^2 +  224 \bar f_{5,2}^2=0.
\end{align*}
Solving for $(f_{9,4},\phi_{8,3}, g_{10,5})$ and using \eqref{lem:f12f13} in the above system of five equations we obtain:
\begin{align*}
f_{9,4} & = \frac{1}{144}\left(5 \alpha^4 + 928 |f_{5,2}|^2  +72 i \alpha f_{7,3} + 4 \alpha^2 f_{5,2}  + 16 \alpha^2 \bar f_{5,2} - 352 f_{5,2}^2- 448 \bar f_{5,2}^2 \right),\\
\phi_{8,3} & =  -\frac{i}{12}\left(\alpha^4 + 176 |f_{5,2}|^2 + 24 i \alpha f_{7,3} -12 \alpha^2 f_{5,2} + 16 \alpha ^2 \bar f{5,2} - 64 f_{5,2}^2 -96 \bar f_{5,2}^2  \right), \\
g_{10,5} & = \frac{1}{72} \left(\alpha^4 + 200 |f_{5,2}|^2 + 4 \alpha^2 \Re f_{5,2} - 160 \Re  f_{5,2}^2  \right).
\end{align*}

Considering terms of weight $12$ of different kinds in the mapping equation, we obtain from $z^8 \bar z^4$ that $f_{11,3}=0$, from $z^2 \bar z^4 \bar w^3$ that $\phi_{10,3} = 0$ and from $z^5 \bar z^3 \bar w^2$ that $f_{11,4} = 0$, which shows that $f_{11}= f_{11,5} z^2w^5$ and $\phi_{10}= \phi_{10,4} z^2 w^4$ are monomials. 

Furthermore, the coefficients of $z,\bar z$ and $\bar w$ allow to solve for the triple $(f_{11,5},\phi_{10,4},g_{12,6})$ and provide an additional equation for $(\alpha, f_{5,2}, f_{7,3})$. More precisely, the coefficient of $z \bar z w^5$ gives
\begin{align*}
24   \Re f_{11,5} - 72 g_{12,6} -  \alpha^3 \Im f_{5,2} + 6 \alpha^2 \Re f_{7,3} + 8  \alpha \Im f_{5,2}^2 + 24 \Re(f_{7,3}\bar f_{5,2})=0.
\end{align*}
The coefficient of $z^5 \bar z^5 \bar w$ gives
\begin{align*}
& 160 i f_{11,5} - 192 i g_{12,6} + 32 \phi_{10,4}  + \alpha^5 + 20 \alpha^3 f_{5,2} - 24 \alpha ^3 \bar f_{5,2}  - 32 \alpha f_{5,2}^2 \\
& + 64 \alpha |f_{5,2}|^2 - 8 i \alpha^2 f_{7,3}    = 0.
\end{align*}
The coefficient of $z^3 \bar z^3 \bar w^3$ gives 
\begin{align*}
&  720 i f_{11,5} + 72 \phi_{10,4}- 1440 i g_{12,6} + 8 \alpha ^5 + 1312 \alpha |f_{5,2}|^2 +  43 \alpha^3 f_{5,2} - 29 \alpha^3 \bar f_{5,2} - 496 \alpha f_{5,2}^2 \\  & - 568 \alpha \bar f_{5,2}^2   + 264 i \bar f_{5,2} f_{7,3}  + 24 i f_{5,2}\bar f_{7,3} +192 \Im(f_{5,2} f_{7,3})   + 66 i \alpha^2 f_{7,3}  + 6 i \alpha^2 \bar f_{7,3}  =0.
\end{align*}
The coefficient of $z^6 \bar z^6$ gives
\begin{align*}
& 288 i f_{11,5} + 72 \phi_{10,4}  - 288 i g_{12,6} + \alpha^5 - 160 \alpha |f_{5,2}|^2 + 50 \alpha^3 f_{5,2}   - 64 \alpha^3 \bar f_{5,2} + 40 \alpha f_{5,2}^2 \\
& + 160 \alpha \bar f_{5,2}^2  - 36 i \alpha^2 f_{7,3} = 0.
\end{align*}
Solving the above system for $(f_{11,5},\bar f_{11,5},\phi_{10,4}, g_{12,6})$ we obtain
\begin{align*}
f_{11,5} & = \frac{i}{48}(\alpha^5 + 320 \alpha  |f_{5,2}|^2 -11 \alpha^3 f_{5,2} + 15 \alpha ^3 \bar f_{5,2} - 120 \alpha  f_{5,2}^2  - 184 \alpha  \bar f_{5,2}^2  + 14 i \alpha^2 f_{7,3} \\
&  \qquad - 2 i \alpha ^2 \bar f_{7,3} - 8 i f_{5,2} \bar f_{7,3} - 88 i \bar f_{5,2}f_{7,3}   -64  \Im(f_{5,2} f_{7,3})),\\
\bar f_{11,5} & = -\frac{i}{96} (\alpha^5 - 64 \alpha | f_{5,2}|^2 + 28 \alpha ^3 f_{5,2} - 24 \alpha ^3  \bar f_{5,2} + 128 \alpha \bar f_{5,2}^2 - 8 i \alpha ^2 f_{7,3} - 16 i \alpha ^2 \bar f_{7,3}  \\
& \qquad -64 i f_{5,2} \bar f_{7,3 } + 256 i  \bar f_{5,2}f_{7,3} + 256 \Im(f_{5,2} f_{7,3})),\\
\phi_{10,4} & = \frac{1}{48} (3 \alpha^5 + 1152 \alpha |f_{5,2}|^2 - 62 \alpha ^3 f_{5,2} + 86 \alpha ^3 \bar f_{5,2} - 416 \alpha f_{5,2}^2 - 688 \alpha \bar f_{5,2}^2 + 76 i \alpha ^2 f_{7,3} \\
& \qquad - 4 i \alpha^2 \bar f_{7,3} - 16 i f_{5,2} \bar f_{7,3} - 176 i \bar f_{5,2} f_{7,3} -128 \Im(f_{5,2} f_{7,3})),\\
g_{12,6} & = \frac{i}{576} (\alpha^5 + 704 \alpha |f_{5,2}|^2 - 46 \alpha^3 f_{5,2} + 50 \alpha^3 \bar f_{5,2} - 272 \alpha  f_{5,2}^2 - 464 \alpha  \bar f_{5,2}^2 \\
& \qquad - 24  \alpha ^2 \Im f_{7,3} - 48 i f_{5,2} \bar f_{7,3} - 528 i \bar f_{5,2} f_{7,3}  -384 \Im(f_{5,2} f_{7,3})).
\end{align*}
The remaining equation (or equivalently the difference of the conjugate of the formula of $f_{11,5}$ and $\bar f_{11,5}$ above) using \eqref{lem:f12f13} is given by \eqref{eq:remeq1}.

Considering terms of weight $14$ of various kinds, we obtain from $z^9 \bar z^3 \bar w$ that $f_{13,3}=0$ and $z^2 \bar z^6 \bar w^3$ gives $\phi_{12,3}=0$. The coefficient of $z^9 \bar z^5$ implies $f_{13,4}=0$, the coefficient of $z^3 \bar z \bar w^5$ shows $f_{13,5} = 0$ and the coefficient of $z^5 \bar z^3 \bar w^3$ implies $\phi_{12,4} = 0$. Hence, $f_{13} = f_{13,6} z^2 w^6$ and $\phi_{12} = \phi_{12,5} z^4 w^5$ are monomials.

Furthermore, the coefficient of $z \bar z \bar w^6$ gives
\begin{align*}
& 1152 \Re f_{13,6} - 4032 g_{14,7} + 9 \alpha^6 + 1760 \alpha^2 |f_{5,2}|^2 + 76 \alpha^4 \Re f_{5,2}  -1312 \alpha^2 \Re f_{5,2}^2 \\
& - 3584 \Re f_{5,2}^3 + 4608 |f_{5,2}|^2 \Re \bar f_{5,2}  + 576 |f_{7,3}|^2 - 120 \alpha^3 \Im f_{7,3} + 384 \alpha  \Im (f_{5,2} f_{7,3}) \\
& + 1056 \alpha \Im (f_{5,2} \bar f_{7,3}) = 0,
\end{align*}
the coefficient of $z^7 \bar z^7$ gives
\begin{align*}
& 576 f_{13,6} -144 i \phi_{12,5} -576 g_{14,7} +\alpha ^6 -1072 \alpha ^2 |f_{5,2}|^2 + 92 \alpha^4 f_{5,2} - 136 \alpha^4 \bar f_{5,2} + 352 \alpha^2 f_{5,2}^2 \\
& + 736 \alpha^2 \bar f_{5,2}^2 - 72 i \alpha^3 f_{7,3} = 0,
\end{align*}
the coefficient of $z^3 \bar z^3 \bar w^4$ gives
\begin{align*}
& 4320 f_{13,6} - 360 i \phi_{12,5} - 10080 g_{14,7}  + 31 \alpha^6 + 7312 \alpha^2 |f_{5,2}|^2 - 154 \alpha^4 f_{5,2} + 246 \alpha ^4  \bar f_{5,2} \\
& - 2576 \alpha^2 f_{5,2}^2 - 4056 \alpha^2 \bar f_{5,2}^2 + 768 f_{5,2}^3 - 5792 \bar f_{5,2}^3 + 15296 f_{5,2} \bar f_{5,2}^2 - 8384 f_{5,2}^2 \bar f_{5,2} \\
& + 252 i \alpha^3 f_{7,3} - 72 i \alpha ^3 \bar f_{7,3} + 864 |f_{7,3}|^2  - 720 i \alpha  f_{5,2} \bar f_{7,3} - 2160 i \alpha \bar f_{5,2} f_{7,3} \\
& - 1728  \alpha  \Im (f_{5,2} f_{7,3}) = 0,
\end{align*}
and the coefficient of $z^4 \bar z^4 \bar w^3$ gives
\begin{align*}
& 1440 f_{13,6} - 180 i \phi_{12,5} - 2520 g_{14,7} + 8 \alpha^6 + 1236 \alpha ^2 |f_{5,2}|^2 + 11 \alpha ^4 f_{5,2} - 20 \alpha^4 \bar f_{5,2} - 470 \alpha^2 f_{5,2}^2 \\
&  - 646 \alpha^2 \bar f_{5,2}^2 + 400 f_{5,2}^3 - 672 \bar f_{5,2}^3 + 2416 f_{5,2}\bar f_{5,2}^2 - 1840 f_{5,2}^2 \bar f_{5,2} + 24 i \alpha^3 f_{7,3} - 18 i \alpha ^3 \bar f_{7,3} \\
&  + 72 |f_{7,3}|^2  -624 i \alpha  \bar f_{5,2}f_{7,3} - 144 i \alpha  f_{5,2} \bar f_{7,3}  + 204 i \alpha  f_{5,2} f_{7,3} - 180 i \alpha  \bar f_{5,2} \bar f_{7,3}  = 0.
\end{align*}
We then solve the above system of four equations for the quadruple $(f_{13,6},\bar f_{13,6},\phi_{12,5}, g_{14,7})$ to obtain
\begin{align*}
f_{13,6} & =  \frac{1}{5760}(-39 \alpha^6 + 4 \alpha^4 (288 f_{5,2}-335 \bar f_{5,2})-16 \alpha^2 (1483 |f_{5,2}|^2 - 570 f_{5,2}^2 - 1013 \bar f_{5,2}^2) \\
& + 64(|f_{5,2}|^2(70 f_{5,2} + 578 \bar f_{5,2}) - 180 f_{5,2}^3 - 401 \bar f_{5,2}^3) - 144 i \alpha^3 (8 f_{7,3}-\bar f_{7,3})  \\
&+ 5184 |f_{7,3}|^2  - 288 i \alpha  (f_{5,2} (4 f_{7,3} + \bar f_{7,3}) - 29  \bar f_{5,2} f_{7,3}) ),\\
\bar f_{13,6} & = \frac{1}{5760}(-9 \alpha ^6 + \alpha ^4 (568 \bar f_{5,2} - 636 f_{5,2}) - 16 \alpha^2 (163 |f_{5,2}|^2 - 162 f_{5,2}^2 - 41 \bar f_{5,2}^2) \\
& - 64(|f_{5,2}|^2(878 f_{5,2} - 1526\bar f_{5,2}) - 96 f_{5,2}^3 + 677 \bar f_{5,2}^3)- 24 i \alpha^3 (5 f_{7,3} - 19 \bar f_{7,3})  \\
&  + 5184 |f_{7,3}|^2 + 192 i \alpha (f_{5,2} (23 f_{7,3} + 8 \bar f_{7,3}) - \bar f_{5,2} (22 f_{7,3} + 31 \bar f_{7,3}))),\\
\phi_{12,5} & = \frac{i}{1440}(35 \alpha^6 -4 \alpha ^4 (486 f_{5,2}-661 \bar f_{5,2}) + 16 \alpha^2 (2075 |f_{5,2}|^2 - 744 f_{5,2}^2 - 1381 \bar f_{5,2}^2) \\
& - 64 (|f_{5,2}|^2(134 f_{5,2}+ 226 \bar f_{5,2}) - 128 f_{5,2}^3 - 207 \bar f_{5,2}^3) + 48 i \alpha ^3 (37 f_{7,3}-3 \bar f_{7,3}) \\
& - 2880 |f_{7,3}|^2 + 96 i \alpha (f_{5,2} (14 f_{7,3} - 3 \bar f_{7,3})-\bar f_{5,2} (73 f_{7,3} + 6 \bar f_{7,3}))),\\
g_{14,7} & = \frac{1}{2880}(3 \alpha^6 + 4 \alpha ^4 (16 f_{5,2} - 7 \bar f_{5,2}) - 16 \alpha^2 (39 f|_{5,2}|^2 - 23 f_{5,2}^2 - 46 \bar f_{5,2}^2 ) \\
& - 64 (|f_{5,2}|^2(32 f_{5,2} -176 \bar f_{5,2}) + 26 f_{5,2}^3 + 97 \bar f_{5,2}^3) - 48 i \alpha^3 f_{7,3} + 1152 |f_{7,3}|^2 \\
& + 96 i \alpha  (f_{5,2} (f_{7,3}-3 \bar f_{7,3})+\bar f_{5,2} (7 f_{7,3}-3 \bar f_{7,3}))).
\end{align*}

Using the above expressions and \eqref{lem:f12f13}, the remaining equations reduce to \eqref{eq:remeq2} and \eqref{eq:remeq3}. This finishes the proof of \cref{lem:remeq}.

\begin{lem}\label{lem:solAlpha}
	The solutions of \eqref{eq:remeq1}--\eqref{eq:remeq3} are given as follows: If $\alpha \neq 0$, then the only solution is $\xi =\gamma=0$ and $\eta={\alpha^2}/{2}$. 
	If $\alpha = 0$, then a solution satisfies $\xi = 0$ and $6 \gamma^2 - \eta^3 = 0$.
\end{lem}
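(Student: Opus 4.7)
The plan is to run a case analysis on whether $\alpha$ and $\xi$ vanish, exploiting a clean reformulation of equation \eqref{eq:remeq1} as the starting point. Observe that \eqref{eq:remeq1} can be rewritten in the suggestive form
\[
\alpha\bigl[(\alpha^2 - 2\eta)^2 + 360\,\xi^2\bigr] = 72\,\gamma\,\xi,
\]
since $\alpha^5-4\alpha^3\eta+4\alpha\eta^2 = \alpha(\alpha^2-2\eta)^2$. The bracket on the left is a sum of two squares, so its vanishing over $\mathbb R$ already forces $\xi=0$ and $\eta = \alpha^2/2$. This is the lever used repeatedly below.

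Case $\alpha=0$: Equation \eqref{eq:remeq1} collapses to $\gamma\xi=0$, and \eqref{eq:remeq2} to $4\xi(5\eta^2 - 279\,\xi^2)=0$. If $\xi\neq 0$, the first gives $\gamma=0$ and the second gives $5\eta^2=279\,\xi^2$; plugging $\alpha=\gamma=0$ into \eqref{eq:remeq3} leaves $-4096\,\eta(333\,\xi^2+\eta^2)=0$, and neither $\eta=0$ (which would force $\xi=0$) nor $333\,\xi^2+\eta^2=0$ is compatible with $\xi\in\mathbb R\setminus\{0\}$. Hence $\xi=0$, and \eqref{eq:remeq3} reduces to a single polynomial relation between $\gamma^2$ and $\eta^3$, as claimed.

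Case $\alpha\neq 0$: First dispose of $\xi=0$. The reformulation above forces $(\alpha^2-2\eta)^2=0$, i.e.\ $\eta=\alpha^2/2$. A direct check shows equation \eqref{eq:remeq2} is then automatically satisfied (every surviving term carries a factor of $\alpha^2-2\eta$ or $\xi$), and \eqref{eq:remeq3} pins down the remaining data. The real work is ruling out $\xi\neq 0$: solve \eqref{eq:remeq1} for
\[
\gamma = \frac{\alpha\bigl[(\alpha^2-2\eta)^2 + 360\,\xi^2\bigr]}{72\,\xi},
\]
and introduce $s := \alpha^2-2\eta$. Substituting $\gamma$ and $\eta = (\alpha^2-s)/2$ into \eqref{eq:remeq2} yields, after cancellation of the $\alpha^4\xi$ and $\alpha^3\gamma$ terms, a polynomial relation of the form $\alpha^2 s^3 + 36\,\alpha^2 s\,\xi^2 - 90\,s^2\xi^2 + 20088\,\xi^4 = 0$; an analogous but heavier substitution reduces \eqref{eq:remeq3} to a second polynomial in $(\alpha,s,\xi)$.

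The main obstacle is to show this reduced two-equation system in $(\alpha,s,\xi)$ admits no real solutions with $\alpha\xi\neq 0$. The plan is to eliminate $s$ by computing the resultant of the two reduced polynomials with respect to $s$, obtaining a polynomial $P(\alpha,\xi)$ whose factors are either monomials in $\alpha,\xi$ or definite sums of squares in these variables (possibly times a positive constant). Each real root therefore requires $\alpha=0$ or $\xi=0$, contradicting the standing assumption. Since the coefficients are explicit, I would carry out the resultant in Mathematica, consistent with the paper's use of computer algebra; conceptually the output should factor as a product $\alpha^a\,\xi^b\cdot (\text{positive polynomial})$, yielding the contradiction directly.
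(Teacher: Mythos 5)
Your reduction is sound as far as it goes: the rewriting of \eqref{eq:remeq1} as $\alpha\bigl[(\alpha^2-2\eta)^2+360\,\xi^2\bigr]=72\gamma\xi$, the complete treatment of the case $\alpha=0$, and the substituted form of \eqref{eq:remeq2} in the variables $(\alpha,s,\xi)$ are all correct (after normalizing $\alpha=1$, which the weighted homogeneity of the system permits, your reduced equation is exactly \eqref{eq:remeq4a}). The genuine gap is in the decisive step. For $\alpha\neq0$, $\xi\neq0$ the entire content of the lemma is that the reduced system in $(s,\xi)$ has no real solutions, and your argument for this is ``compute the resultant in $s$; the output should factor as a monomial times a positive polynomial.'' That expected factorization is precisely the assertion to be proved, and nothing in your setup guarantees it; if the resultant had a factor that is not sign-definite on the real locus (or if a degenerate vanishing of leading coefficients had to be tracked), the plan offers no fallback. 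As written, this is a proposal for a computation with its outcome asserted, not a proof. The paper closes exactly this point by carrying the elimination out: it combines \eqref{eq:remeq4a} and \eqref{eq:remeq4b} to cancel the $\xi^4$ terms, yielding \eqref{eq:remeq5}, rules out the factor $1-2\eta$ (which forces $\xi=0$), solves for $\xi^2$ as the explicit rational function \eqref{eq:remeq6} of $\eta$, substitutes back to conclude $\eta\in\{53/111,\ 1/2,\ 51/22\}$, and excludes each value, the last because it makes the right-hand side of \eqref{eq:remeq6} negative. (That analysis also shows the residual complex solutions have $\xi^2<0$, which is why your hoped-for positivity of the resultant is plausible; plausibility, however, is not verification.)

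There is a second, smaller gap in the subcase $\alpha\neq0$, $\xi=0$: the lemma asserts $\gamma=0$, and you dispose of it with ``\eqref{eq:remeq3} pins down the remaining data'' without performing the substitution. This step must actually be confronted: inserting $\xi=0$, $\eta=\alpha^2/2$ into \eqref{eq:remeq3} as displayed leaves $62208\,\gamma^2-49\,\alpha^6=0$, which does not give $\gamma=0$. Since the maps ultimately obtained (the family $r_\beta$, for which $f^{(1,2)}=\alpha^2/2$ and $f^{(1,3)}=3i\alpha^3/4$, hence $\gamma=0$) confirm that $\gamma=0$ is the correct conclusion, the mismatch points to a slip in the displayed coefficients of \eqref{eq:remeq3} (the same discrepancy affects the constant in the relation $6\gamma^2=\eta^3$ when $\alpha=0$); but in either case the conclusion $\gamma=0$ is part of what the lemma claims, and your proposal never derives it.
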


\begin{proof}
If $\alpha = 0$, then \eqref{eq:remeq1} shows that $\gamma \xi  = 0$. When $\gamma = 0$, we have $\eta = \xi  = 0$. If $\xi  = 0$, then \eqref{eq:remeq2} reduces to $6 \gamma^2 - \eta^3 = 0$.\\
Consider the case $\alpha \neq 0$. Assume $\xi  \neq 0$, then \eqref{eq:remeq1} shows
\begin{align*}
	\gamma = \frac{1}{72 \xi } \alpha(1+360 \xi ^2-4 \eta(1-\eta)).
\end{align*}
The remaining two equations then become
\begin{align}\label{eq:remeq4a}
	18\xi ^2(3-4 \eta(4-5 \eta))-(1-2 \eta)^3-20088\xi ^4 = 0,\\
	\label{eq:remeq4b}
	(1-2 \eta)^4-36 \xi ^2(1-2\eta)^2(1+6 \eta)+2592(53- 111 \eta)\xi ^4 = 0.
\end{align}
Note that $\eta = 53/111$ does not lead to a solution. So we can assume $\eta \neq 53/111$. Combing \eqref{eq:remeq4a} and \eqref{eq:remeq4b}, to eliminate $\xi ^4$, we obtain
\begin{align}\label{eq:remeq5}
	(1-2 \eta)((1-2 \eta)^2(181 - 382 \eta) - 36 \xi ^2(287 - 2 \eta(925-1296 \eta)))=0.
\end{align}
Observe that $\eta = 1/2$ leads to $\xi  = 0$, which is not allowed. Also, if the coefficient of $\xi ^2$ is zero, this gives a contradiction to the fact that $\xi  \in \RR$. Solving \eqref{eq:remeq5} for $\xi ^2$ gives
\begin{align}\label{eq:remeq6}
	\xi ^2 = \frac{(1-2\eta)^2(182-382\eta)}{36(287-2 \eta(925-1296 \eta))}.
\end{align}
Substituting this into the equations \eqref{eq:remeq4a} and \eqref{eq:remeq4b} implies that $\eta \in\{53/111, 1/2\}$, which we already excluded, or $\eta = 51/22$. Assuming the latter leads to a negative expression in the right-hand side of \eqref{eq:remeq6}. Thus the case $\xi  \neq 0$ does not lead to any solution. Assuming $\xi  = 0$, directly leads to $\gamma = 0$ and $\eta = \alpha^2/2$, which completes the proof.
\end{proof}

When $\alpha \neq 0$, the $4$-jet of the map at $0$ is completely fixed, while for $\alpha = 0$ there is still one parameter to be determined. In the latter case we need to consider weight $16$ in the weighted homogeneous expansion of the mapping equation.

Assuming $\alpha = 0$, we obtain from the coefficient of $z^{12} \bar z^4$ that $f_{15,3} = 0$, from $z^7 \bar z \bar w^4$ that $f_{15,4} = 0$, from $z^{11} \bar z^5$ that $\phi_{14,3}=0$, from $z^5 \bar z \bar w^5$ that $f_{15,5}=0$, from $z^9 \bar z^5 \bar w$ that $\phi_{14,4} = 0$, from $z^3 \bar z \bar w^6$ that $f_{15,6}=0$, and from $z^7 \bar z^5 \bar w^2$ that $\phi_{14,5} = 0$. This shows that $f_{15}=f_{15,7} z^2 w^7$ and $\phi_{14}= \phi_{14,6} z^4 w^6$ are both monomials.

Furthermore, the coefficient of $z \bar z \bar w^7$ gives
\begin{align*}
& 36 \Re f_{15,7} - 144 g_{16,8} - 8 f_{5,2}^2 (10 f_{7,3}+7 \bar f_{7,3}) - 11 \bar f_{5,2}^2 (f_{7,3} + 4 \bar f_{7,3}) \\
& + |f_{5,2}|^2 (152 f_{7,3} + 143 \bar f_{7,3}) =0,
\end{align*}
the coefficient of $z^7 \bar z^7 \bar w$ gives
\begin{align*}
14 f_{15,7}  - 3 i \phi_{14,6} - 16 g_{16,8} =0,
\end{align*}
the coefficient of $z^5 \bar z^5 \bar w^3$ gives
\begin{align*}
& 630 f_{15,7} - 90 i \phi_{14,6} -1008 g_{16,8} + 32 f_{5,2}^2 (f_{7,3}-\bar f_{7,3}) + \bar f_{5,2}^2(209 f_{7,3}  + 100 \bar f_{7,3}) \\
& + |f_{5,2}|^2 (23 \bar f_{7,3} - 164 f_{7,3}) =0,
\end{align*}
and the coefficient of $z^8 \bar z^8$ gives
\begin{align*}
4 i f_{15,7} + \phi_{14,6} - 4 i g_{16,8} =0.
\end{align*}

Solving the above system for $(f_{15,7},\bar f_{15,7}, \phi_{14,6},g_{16,8})$, we obtain
\begin{align*}
f_{15,7} & = \frac{1}{54} \left(32 f_{5,2}^2 (f_{7,3}-\bar f_{7,3}) + \bar f_{5,2}^2 (209 f_{7,3}+100 \bar f_{7,3}) + |f_{5,2}|^2(23 \bar f_{7,3}- 164 f_{7,3})\right),\\
\bar f_{15,7} & = \frac{2}{9} \left(f_{5,2}^2 (28 f_{7,3} + 6 \bar f_{7,3}) + \bar f_{5,2}^2 (55 f_{7,3} + 36 \bar f_{7,3}) - |f_{5,2}|^2 (79 f_{7,3} + 30 \bar f_{7,3}) \right),\\
\phi_{14,6} &= -\frac{i}{27} \left(32 f_{5,2}^2 (f_{7,3}-\bar f_{7,3}) + \bar f_{5,2}^2 (209 f_{7,3} + 100 \bar f_{7,3}) + |f_{5,2}|^2 (23 \bar f_{7,3} - 164 f_{7,3}) \right),\\
g_{16,8} & = \frac{1}{108} \left(32 f_{5,2}^2 (f_{7,3}-\bar f_{7,3}) + \bar f_{5,2}^2 (209 f_{7,3}+100 \bar f_{7,3}) + |f_{5,2}|^2 (23 \bar f_{7,3} - 164 f_{7,3}) \right). 
\end{align*}

When using $\xi=0$, the remaining coefficients reduce to the condition $\gamma \eta^2 = 0$, which, together with \cref{lem:solAlpha}, implies that $f^{(1,2)}=f^{(1,3)}=0$. This fixes the $4$-jet of the map at $0$ in the case $\alpha = 0$.

We sum up and list the $4$-jet of $H$ at $0$ in the case $\lambda = 0$, when setting $\alpha = 2 \beta$, 
\begin{align*}
f(z,w) & = z(1 + i \beta w + \beta^2 w^2 + i \beta^3 w^3) + O(5)\\
\phi(z,w) & = 2 \beta z^2 (1+ \beta w^2) + O(5)\\
g(z,w) & = w (1 + \beta w^2) + O(5)
\end{align*}

Note that we have the following expressions:
\begin{align*}
H(z,0)  = (z,\alpha z^2,0), \quad 
H_w(z,0)  = \left(\frac{i \alpha}{2} z,0,1\right), \quad 
H_{ww}(z,0) = \left(\frac{\alpha^2}{2} z, \frac{\alpha^3}{2} z^2,0\right).
\end{align*}

Using the information about the $4$-jet of $H$ in \cref{lem:holeq2} we obtain the following holomorphic equations for $H$:
\begin{align*}
4 i z(w f - z g) + \alpha w^2  f^2 + w^2(1- i \alpha g) \phi & = 0\\
2 i z^2 - (2 i- \alpha w) z f - \alpha w f^2 -w(1- i\alpha g) \phi & = 0\\
4 i \alpha z^2 f - z (2 i- \alpha w)( \alpha  f^2 + (1- i \alpha g) \phi) & = 0. 
\end{align*}

Solving this system of equations we get the family of maps, when writing $\alpha = 2 \beta$,
\begin{align*}
r_\beta(z,w) = \frac{1}{1-\beta^2 w^2}\left(z(1+ i \beta w), 2 \beta z^2, w \right).
\end{align*}
By the partial normal form in \cref{thm:normalform} we can assume $\beta\in \{-1,0,1\}$. Taking $\beta\in \{-1,1\}$ gives $r_1$ and $r_{-1}$ in \cref{thm:main1} and $r_0$ corresponds to $\ell$. We finish Case 2 when $\lambda =0$. Together with Case 1 treated in the last section, this completes the proof of part (a) of Theorem \ref{thm:main1}, giving a complete classification of CR transversal maps from $\heis$ into $\lc$.

The pairwise inequivalences of these 4 maps are clear from their partial normal forms.
\subsection{Nowhere CR transversal maps}

In this section we discuss nowhere CR transversal maps. This proves part (b) of Theorem~\ref{thm:main1}. Indeed, if a CR map is CR transveral at one point, it is CR transversal at any point in its domain, as can be checked for the maps listed in part (a) of \cref{thm:main1}, see also \cref{rem2}.

\begin{lem}\label{lem:nontrans}
Any smooth CR map $H$ from $\heis$ into $\lc$, which is not CR transversal at any point of $\heis$ is equivalent to a map $(z,w) \mapsto (0,\phi(z,w),0)$ for a CR function $\phi$ satisfying $\phi(0,0) = 0$.
\end{lem}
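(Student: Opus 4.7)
The plan is to first reduce to $H(0) = 0$ via transitivity of $\Aut(\heis)$ and $\Aut(\lc)$ (both act transitively on the respective manifolds). Writing $H = (f, \phi, g)$ as formal power series, the mapping equation reads
\[
(g - \gba)(1 - |\phi|^2) - 2i|f|^2 - 2i\Re(f^2 \pba) = Q \cdot (w - \wba - 2iz\zba),
\]
where $Q$ is smooth (or formal). Setting $\zba = \wba = 0$ gives $g(z,w) = w\,Q(z,w,0,0)$, so $g(z,0) \equiv 0$, and nowhere CR-transversality at $0$ forces $Q(0) = g_w(0) = 0$.

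The crucial additional input is that $Q$ vanishes on all of $\heis$, not merely at $0$. Writing $\varrho := w - \wba - 2iz\zba$, we have $Q = \widetilde Q \cdot \varrho$ for some formal series $\widetilde Q$, so $H^*\widetilde{\vr}$ vanishes to order two on $\heis$. Equivalently, from the identity $\partial\bar\partial(H^*\widetilde{\vr})(L, \Lba)|_\heis = Q \cdot \partial\bar\partial\varrho(L, \Lba)|_\heis$, the vector $dH(L)|_p$ lies in the one-dimensional Levi kernel $K_{H(p)} \subset T^{(1,0)}_{H(p)}\lc$ at every $p \in \heis$ (using that $\lc$ is $2$-nondegenerate with Levi form of constant rank one). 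At $H(0) = 0$ one computes $K_0 = \mathbb{C}\partial_\zeta$, which gives $f_z(0) = g_z(0) = 0$.

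Next I would expand the mapping equation as a formal power series with weight $1$ for $z,\zba$ and weight $2$ for $w,\wba$, matching coefficients of weighted monomials. Since $\widetilde Q \cdot \varrho^2$ has no terms of weight less than $4$, the weight-$3$ identity forces $g_{zw}(0) = 0$, and the weight-$4$ identity yields (from the $w^2$, $\wba^2$, $w\wba$, and $z^2\zba^2$ coefficients in turn) that $g_{ww}(0)$ is purely imaginary but also equal to $-4|f_w(0)|^2$, hence $g_{ww}(0) = f_w(0) = 0$; the $z^2\zba^2$ coefficient then gives $f_{zz}(0) = 0$, and the $z^2 w, \zba^2\wba$ coefficients give $g_{zzw}(0) = 0$. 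An inductive Taylor analysis at each subsequent weight kills every remaining coefficient of $f$ and $g$, so $f \equiv 0$ and $g \equiv 0$ as formal series, and smoothness (together with the CR extension theorem and the reflection-principle type results cited in the paper's Remark) upgrades this to identical vanishing. A more conceptual alternative exploits integrability of the Levi kernel, which foliates $\lc$ by holomorphic disks whose leaf through $0$ is $\{(0,\zeta,0) : |\zeta| < 1\}$; the condition $dH(L) \in K$ along $\heis$, propagated along the iterated Segre sets of $\heis$, forces $H(\heis)$ to lie in this single leaf.

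Consequently $H$ has the form $(0, \phi, 0)$ for some CR function $\phi$ with $\phi(0) = 0$ (and $|\phi| < 1$ to land in $\lc$), proving the claim. The main obstacle will be systematizing the inductive bookkeeping at each weight in the Taylor approach, or equivalently making rigorous the Levi-leaf/Segre-set propagation argument, since $Q \equiv 0$ is a priori only a condition on $\heis$ itself and needs to be converted into holomorphic information on Segre sets away from $\heis$.
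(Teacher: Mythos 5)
Your reductions are fine as far as they go: the normalization $H(0)=0$, the identity $\widetilde{\vr}\circ H=Q\,\vr_{\heis}$, the observation that nowhere transversality makes $Q$ vanish along $\heis$, and the conclusion that $dH(L)$ lies in the (one-dimensional) Levi kernel of $\lc$ at every point are all correct (for the last one you also need, and implicitly use, that non-transversality at $p$ puts $dH(T_p\heis)$ inside the complex tangent space of $\lc$, which kills the transversal term in the $L\overline{L}$-computation, and pseudoconvexity of $\lc$ to pass from ``Levi-isotropic'' to ``in the kernel''). But the heart of the lemma --- that $f\equiv 0$ and $g\equiv 0$ --- is exactly the step you do not carry out: ``an inductive Taylor analysis at each subsequent weight kills every remaining coefficient'' is asserted, not proved, and you yourself flag the bookkeeping (or the Segre/Levi-leaf propagation) as the unresolved obstacle. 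In your setup this induction is genuinely nontrivial, because you only retain the second-order vanishing $\widetilde{\vr}\circ H=\widetilde{Q}\,\vr^2$ with an unknown series $\widetilde{Q}$, so at every weight new unknown coefficients of $\widetilde{Q}$ enter the equations; it is not clear the scheme closes, and in particular you never obtain the key fact $g\equiv 0$, only its low-order vanishing.

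The paper's proof avoids all of this by using a stronger input at the outset: for a nowhere CR transversal map the pullback $\widetilde{\vr}\circ H$ vanishes \emph{identically} (as a power series in the independent variables $z,w,\bar z,\bar w$), not merely to second order along $\heis$. Granting that identity, setting $\bar z=\bar w=0$ gives $g\equiv 0$ at once, and the remaining relation $2|f|^2+2\Re\left(f^2\pba\right)=0$ is dispatched by a short weighted-homogeneous argument: since $\phi(0)=0$, the $\phi$-terms enter only at weights strictly higher than twice the lowest weight of $f$, so the diagonal term $f_{\ell_0}\bar f_{\ell_0}$ must vanish by itself, forcing $f\equiv 0$. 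So the missing ingredient in your attempt is precisely this upgrade from ``$Q=0$ on $\heis$'' to the identical vanishing of $\widetilde{\vr}\circ H$ (equivalently $g\equiv 0$), together with the positivity/weight argument that then eliminates $f$; without it, your proposal is an outline with the decisive induction left open rather than a proof.
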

Note that in \cite[Example 2.5]{xiao2020holomorphic}, Xiao--Yuan also gave examples of nowhere CR transversal maps from the sphere into the boundary of the classical domain of type IV.
\begin{proof}
By homogeneity we can assume that $H$ sends $0$ to $0$. If $H$ is nowhere CR transversal in $\heis$ it satisfies the equation:
\begin{align}\label{eq:nowhereME}
	(g(z,w) - \gba(\bar z, \bar w))(1 - |\phi(z,w)|^2) - 2i |f(z,w)|^2 -2i \Re \left(f(z,w)^2 \pba(\bar z, \bar w)\right) = 0,
\end{align}
for all $(z,w,\bar z, \bar w)$ in a neighbourhood of $0$. Setting $\bar z = \bar w = 0$ shows $g= 0$. Note that $\phi= 0$ implies $f= 0$. Assume that $\phi \neq 0$. We write $f= \sum_{\ell \geq \ell_0} f_\ell$ and $\phi=\sum_{k \geq k_0} \phi_k$, where $f_r$ and $\phi_r$ are weighted homogeneous polynomials of order $r$ w.r.t. the weight $(1,2)$ for $(z,w)$ and $\ell_0,k_0 \geq 0$. Collecting terms of weight $m$ in \eqref{eq:nowhereME} we obtain:
\begin{align}\label{eq:nowhereWeighted}
2 \sum_{i=0}^m f_i \bar f_{m-i} + \sum_{j=0}^m \sum_{k=0}^j \bar f_{m-j} \bar f_{j-k} \phi_k + \sum_{l=0}^m \sum_{n=0}^l f_{m-l} f_{l-n} \bar \phi_n = 0,
\end{align} 
where we skip the arguments.
We assume $\phi_{k_0} \neq 0$ and $\phi_k = 0$ for $k < k_0$. Considering $m=k_0$, shows $\sum_{i=0}^{k_0} f_i \bar f_{k_0-i}=0$, which implies $f_r = 0$ for $r\leq k_0$. Considering $m=2 k_0+2$ shows $f_{k_0+1} = 0$. Inductively, assuming $f_{k_0+s} = 0$ for $s<r$ and considering $m= 2 (k_0 + r)$ for $r \geq 2$, we obtain that $f_{k_0+r} = 0$. 
This shows $f=0$ and completes the proof.
\end{proof}
\subsection{Proof of Corollary~\ref{cor:balld4}}
We can transform the maps given in Theorem~\ref{thm:main1}, part (a), into proper holomorphic maps from $\mathbb{B}^2$ into $\tfour$ using the birational transformation $\Phi$. First, let $\gamma_0$ be the automorphism of the ball given by
\[
\gamma_0(z,w) = \left(-\frac{2 \sqrt{2} z}{w-3},\frac{1-3 w}{w-3}\right)
\]
and $\mathcal{C}$ be the Cayley transform
\[
\mathcal{C}(z,w) = \left(\frac{z}{1+w},\ i\left(\frac{1-w}{1+w}\right)\right)
\]
which sends $\mathbb{S}^3\setminus \{(0,-1)\}$ onto the Heisenberg hypersurface $\heis$.
Then we obtain the map $R_0$ as in (i) via
\[
R_0 = -\Phi \circ \ell \circ \mathcal{C}\circ \gamma_0.
\]
Moreover,
\[
\vr_{\tfour} \circ R_0 = \vr_{\mathbb{S}^3}
\]
and we have $R_0$ has vanishing Ahlfors tensor: $\mathcal{A}(R_0) = 0$.

The family $R_{\alpha}$ obtained by composing $\Phi$ with $r_\alpha$ and the Cayley transform, i.e., 
\[
	R_\alpha = -\Phi \circ r_\alpha \circ \mathcal{C} \circ \gamma_0
\]
gives an 1-parameter smooth deformation of $R_0$ by proper holomorphic maps. This family has the same properties as the family $r_\alpha$, that all $R_{\alpha}$'s with $\alpha>0$ are mutually equivalent, and a similar statement holds for the case $\alpha<0$, yet $R_0$ is not equivalent to $R_{\alpha}$ for $\alpha \ne 0$. This shows that, in contrast to the higher dimensional, but small codimensional case, treated in \cite{xiao2020holomorphic}, $R_0$ is \emph{not} rigid. Observe that $R_{\alpha}$ is rational of degree two with a rather simple formula. For each $\alpha \ne 0$, $R_{\alpha}$ is equivalent to a polynomial (quadratic) map.

Clearly, the map $P_{1}(z,w) =(zw, (z^2-w^2)/2, i(z^2+ w^2)/2)$ is a proper polynomial map sending $\mathbb{B}^2$ into $\tfour$. On the boundary, $P_{1}$ sends $\mathbb{S}^3$ into the smooth boundary part $\mathcal{R}$ transversally. Since
\[
\vr_{\tfour} \circ P_1 = (1+|z|^2 + |w|^2)\,\vr_{\mathbb{S}^3},
\]
we obtain, with $Z_{1} = \wba \partial_z - \zba \partial_w \in \Gamma(T^{(1,0)} \mathbb{S}^3)$, 
that the Ahlfors invariant of $P_1$ is
\[
\mathcal{A}(P_1)(Z_1,\Zba_1)
=
\frac{1}{2}.
\]
Similarly, the map $P_{-1}(z,w) = (z, w^2/2, iw^2/2)$ satisfies
\[
\vr_{\tfour} \circ P_{-1} = (1-|z|^2 + |w|^2)\,\vr_{\mathbb{S}^3}
\]
while
\[
\tilde{\rho}_{\tfour} \circ P_{-1} \biggl|_{\mathbb{S}^3}
=
-1 + |z|^2 + \frac{|w|^4}{2} \leqslant 0, 
\]
with the equality appearing if and only if $w=0$. Thus, $P_{-1}$ sends the circle $\{(e^{it},0)\}\subset \mathbb{S}^3$ into the singular locus of $\partial \tfour$ and sends $\mathbb{S}^3\setminus \{(e^{it},0)\}$ into the smooth boundary part $\mr\subset \partial \tfour$ transversally. The Ahlfors invariant of $P_{-1}$ is defined on $\mathbb{S}^3\setminus \{(e^{it},0)\}$:
\[
\mathcal{A}(P_{-1})(Z_1,\Zba_1)
=
-\frac{1}{2|w|^2},
\quad w \ne 0.
\]
Thus, these three maps $R_0, P_1$, and $P_{-1}$ are mutually inequivalent. Moreover, $R_{\alpha}\sim P_1$ for $\alpha >0$ and similarly $R_{\alpha} \sim P_{-1}$ for $\alpha <0$ (in contrast, the trace of the Ahlfors tensor of CR maps into the sphere is always nonnegative \cite{lamel2019cr}). 

The map $I$ satisfies
\[
\vr_{\tfour} \circ I = \vr_{\mathbb{S}^3}
\]
and thus has vanishing Ahlfors tensor. Consequently, $I$ is of geometric rank zero at all points. Clearly, any smooth deformation of $I$ along the sphere must be equivalent to $I$.

None of the rational maps $R_0, P_1$, and $P_{-1}$ can be equivalent to the irrational map $I$, since all the automorphisms of the source and target are rational. Hence, these four maps are pairwise inequivalent. This can also be verified by analyzing elementary geometric properties of the maps as already explained in \cref{rem2}. 

Finally, assume $H\colon \mathbb{B}^2 \to \tfour$ is a proper holomorphic map that extends smoothly to a boundary point $p\in \mathbb{S}^3$. We can assume that $p=(0,1)$ and $H(p) = \left(0,\frac{1}{2},\frac{i}{2}\right)$. Then $\tilde{H}: = \Phi^{-1} \circ H \circ \mathcal{C}^{-1}$ defines a germ at the origin of CR maps sending $\heis$ into $\lc$. Thus, it must belong to one of the five equivalence classes of the germs represented by $r_0, r_{1}, r_{-1}$, $\iota$, and $t$ for some nowhere CR transversal map $t$ as given in \cref{lem:nontrans}. But if $\tilde H$ is equivalent to $t$, then $H$ cannot be proper as can be easily checked. Hence, $\tilde{H}$ must be in one of the equivalence classes represented by the germs of CR transversal maps, which are the four equivalence classes of the germs of $\Phi^{-1} \circ F\circ \mathcal{C}^{-1}$ with $F\in \{R_0, P_1, P_{-1}, I\}$. In fact, the four germs of CR maps $\Phi^{-1} \circ F\circ \mathcal{C}^{-1}$, $F\in \{R_0, P_1, P_{-1}, I\}$ are pairwise inequivalent as can be easily checked using the Ahlfors invariant and the irrationality. Thus, there are local CR automorphisms $\psi\in \Aut (\lc, 0)$ and $\gamma \in \Aut (\heis, 0)$ such that $\psi \circ \tilde H \circ \gamma^{-1} = \Phi^{-1} \circ F\circ \mathcal{C}^{-1}$ near the origin for some $F\in \{R_0, P_1, P_{-1}, I\}$. Thus, if $\tilde \psi : = \Phi \circ \psi \circ \Phi^{-1}$ and $\tilde \gamma := \mathcal{C}^{-1} \circ \gamma \circ \mathcal{C}$, then, as germs at $p$, $\tilde \psi \circ H \circ \tilde \gamma^{-1} =F  \in \{R_0, P_1, P_{-1}, I\}$. But $\tilde \psi$ is a global automorphism of $\tfour$ by Theorem \ref{thm:alex} and $\tilde{\gamma}$ is a global automorphism of the unit ball. This completes the proof of \cref{cor:balld4}.
\begin{rem}
Although $P_{1}$ and $P_{-1}$ have quite simple formulas, to the best of the authors' knowledge, they didn't appear before in the literature. Take the family of rational maps $r_{\beta} \colon \heis \to \lc$ as above and transform it into a family of rational maps $\Phi \circ r_{\beta} \circ \mathcal{C}$ to get various representatives of the equivalence classes of $P_{\pm 1}$. For certain values of $\beta$, the formulas simplify. For example, if $\beta = -1/4$, then the map $\Phi \circ r_{\beta} \circ \mathcal{C}$ sends $\mathbb{B}^2$ into $\tfour \cap \{z_3 - i z_2 =0 \}$. This suggest that we substitute $z_3 = i z_2$ into $\vr_{\tfour}$ to obtain
	\[
		\vr_{\tfour}(z_1, z_2, iz_2)
		=
		1 - 2|z_1|^2 - 4|z_2|^2 + |z_1|^4
		=
		(1- |z_1|^2)^2 - 4|z_2|^2.
	\]
From this, we can easily find the desired map $P_{-1}$. The formula for $P_{1}$ can be constructed easily by analyzing the map $\Phi \circ r_{\beta} \circ \mathcal{C}$ with $\beta =1/4$.
\end{rem}

\section{Examples}
\label{sec:examples}
Observe that $\lc \cap \{\zeta = 0\}$ is CR equivalent to the Heisenberg hypersurface $v = |z|^2$. In fact, $\lc \cap \{\zeta = 0\}$ is the image of $\heis$ under the linear map $\ell(z,w)$ in part (i) of Theorem~\ref{thm:main1}. The formula for the stability group of $\Aut(\lc,0)$ suggests we consider the variety $V = \{w \zeta + i z^2 =0\} \subset \CC^3$ which has a singularity at the origin. On $V$, we have $\Re (z^2 \bar{\zeta})= - |\zeta|^2 v$ and thus on $\lc \cap V$,
\[
v = \frac{|z|^2 - v |\zeta|^2}{1 - |\zeta|^2},
\]
implying $v = |z|^2$. The real subvariety $V \cap \lc$ contains a complex variety $(0,\zeta,0)$, and a graph over the Heisenberg hypersurface $v = |z|^2$ given by $(z,- i z^2/w,w)$, $w\ne 0$. All germs of nontranversal CR maps at the origin send an open set in $\heis$ into this complex variety, see Lemma~\ref{lem:nontrans}. From Lemma~\ref{lem:stab}, we find that $V \cap \lc$ is ``invariant'' in the following sense: For $(\tilde{z},\tilde{\zeta},\tilde{w})$ in the stability group, we have
\[
\tilde{w}\tilde{\zeta} + i \tilde{z}^2
=
-\lambda^2 u^2 (w \zeta + i z^2)/\delta.
\]
That is $\gamma (\lc \cap V) \subset \lc \cap V$ for all $\gamma \in \Aut(\lc,0)$. From the observation above, we can construct a examples of maps from $\heis$ into $\lc$ as follows.  

\begin{example}
Each map in the family of rational maps $H_t(z,w) = (z,-iz^2/(w+t),w+t)$, $t\in \mathbb{R}$, sends $\mathbb{H}^3\setminus \{(0,-t) \}$ into $\lc\cap V$ transversally, but does not extend holomorphically to the point $(0,-t)$. It turns out that at any point $p \in \mathbb{H}^3\setminus \{(0,-t) \}$, $H_t$ represents a germ at $p$ that is equivalent to the germ of $\ell$ at the origin. In fact,
\[
	\widetilde{\vr}_{\lc} \circ H_t = |w+t+2i|^{-2} \vr_{\heis},
\]
and hence outside the pole of $H_t$, the Ahlfors tensor $\mathcal{A}(H_t)$ vanishes identically. Thus, $H_t$ must be equivalent to either $\ell$ or $\iota$. But the latter possibility is ruled out by the rationality and $H_t$ is equivalent to $\ell$. Transferring this map into a map from the unit ball into the type IV model, we obtain a map that is equivalent to $R_0$. Explicitly, with $t=0$, we have
\[
	\Phi \circ H_0\circ \mathcal{C} \circ \gamma_1 = - R_0,
\]
for 
\[
	\gamma_1(z,w) = \left(-\frac{2 \sqrt{2} z}{w+3},-\frac{3 w+1}{w+3}\right)
\]
is an automorphism of $\mathbb{B}^2$ exchanging the origin and $(0,-1/3)$.
\end{example}

\begin{example}\label{ex:linmap}
As an application of the transitive part of the automorphisms in \cref{sec:autom}, we explicitly verify that the rational map $R: \mathbb{B}^2 \rightarrow \tfour$ from \cite[Theorem 1.4]{xiao2020holomorphic}, given by 
\begin{align*}
R(z_1,z_2) = \left(z_1, \frac{z_1^2/2 -z_2^2 + z_2}{\sqrt{2}(1-z_2)}, \frac{i(z_1^2/2+ z_2^2-z_2)}{\sqrt{2}(1-z_2)} \right),
\end{align*}
corresponds to the linear map $\ell: \heis \rightarrow \lc ,\ell(z,w)=(z,0,w)$ from \cref{thm:main1}. Note that $R$ sends $p=(0,1/2) \in \mathbb{B}^2$ to $q=(0,1/2 \sqrt{2}, -i/2 \sqrt{2}) \in \tfour$. We define $\Psi_1: \CC^3\setminus\{z_2 - i z_3 = \sqrt{2}\} \rightarrow \CC^3$, given by
\begin{align*}
\Psi_1(z_1,z_2,z_3)= \left(\frac{-2 z_1}{\sqrt{2}-z_2+i z_3}, \frac{2 z_2 + 2 i z_3- \sqrt{2}(z_1^2+z_2^2+z_3^2)}{2(\sqrt{2}-z_2+i z_3)}, \frac{2(\sqrt{2}i + i z_2 + z_3)}{\sqrt{2}-z_2+i z_3}\right),
\end{align*}
which locally sends $\mr$ into $\lc$ and $q$ to $(0,1/2,2 i) \in \lc$, and $\Psi_2: \CC^2\setminus\{w = -i\} \rightarrow \CC^2$, given by
\begin{align*}
\Psi_2(z,w)= \left(\frac{i-w}{i+w},\frac{2 i z}{i+w} \right),
\end{align*}
which sends $\heis$ into $\mathbb{S}^3$ and $(1/2,i)$ to $p$. We consider
\begin{align*}
\hat R(z,w)  = \Psi_1 \circ R \circ \Psi_2 = & \left(\frac{2\sqrt{2}(1-2 z-2 i zw+w^2)}{-1+4 z + (6 i - 4 i z+w)w},\frac{1-4 z+2i w(1+2 z)+8 z^2-w^2}{-1+4 z + (6 i - 4 i z+w)w} \right., \\
& \quad \left. 2 i + \frac{4i(i - w)^2}{-1+4 z + (6 i - 4 i z+w)w} \right),
\end{align*}
which locally maps $\heis$ to $\lc$ and $\hat p=(0,1)$ to $\hat q=\frac{1}{3}(-2 \sqrt{2}i,1,2 i)$. Next, we compose $\hat R$ with automorphisms to ensure that it sends $0$ to $0$. More precisely, we define
\begin{align*}
\varphi(z,w)=(z,w+1), 
\end{align*}
which is an automorphism of $\heis$, and 
\begin{align*}
\varphi'_1(z,\zeta,w) & =\left(\frac{2 \sqrt{2} z}{3-\zeta},\frac{3 \zeta-1}{3-\zeta},w-\frac{i z^2}{3-\zeta} \right), \\
\varphi'_2(z,\zeta,w) & =\left(z+i(1+\zeta),\zeta,2 z + w +i (1+\zeta)\right),
\end{align*}
which originate from choosing $\lambda' = 1/2$ in \eqref{eq:transAuto2} and $b=i$ and $r=0$ in \eqref{eq:transAuto1} respectively. If we set 
\begin{align*}
\check R(z,w) & = \varphi_2' \circ \varphi_1' \circ \hat R  \circ \varphi\\
& = \left(\frac{2(2i (z-(1-i))z - 2 i z w +w ^2)}{-2((i-1)+z)^2+((2+ 4 i)-4 i z)w + w^2}, \right.\\
& \qquad \frac{2 z(3 z-(2-2i)) - w(2 - 4 i z)-w^2}{-2((i-1)+z)^2+((2+ 4 i)-4 i z)w + w^2},\\
& \qquad \left.\frac{(4+4 i)w((1+i)-2 i z + w)}{-2((i-1)+z)^2+((2+ 4 i)-4 i z)w + w^2} \right),
\end{align*}
we obtain a map, which, again, locally sends $\heis$ into $\lc$ and $0$ to $0$. For the final step, we define the following automorphisms of $\heis$ and $\lc$ respectively, fixing $0$,
\begin{align*}
\phi(z,w) & = \left(\frac{2 z + w}{(2+ 2 i)(i+z)+w},\frac{(i-1) w}{ (2+ 2 i)(i+z)+w} \right),\\
\phi'(z,\zeta,w) & = \left(-2 i + \frac{8 + 4 i z}{i(z-2 i)^2 + (\zeta-1)w}, \frac{(4 + i z)z - 4 i \zeta + (\zeta-1) w}{i(z-2 i)^2 + (\zeta-1)w}, \right.\\
& \left. \qquad \frac{- 4 i w}{i(z-2 i)^2 + (\zeta-1)w} \right),
\end{align*} 
which are obtained by following the procedure in the proof of \cref{thm:normalform}. It can be verified that the map $L=\phi' \circ \check R \circ \phi$ agrees with $\ell$.
\end{example}

\begin{rem}
Carrying out the analogous steps as in \cref{ex:linmap} it can be shown that the irrational map $I:\mathbb{B}^2 \rightarrow \tfour$ given by $I(z,w) = \left(z,w,1-\sqrt{1-z^2-w^2}\,\right)\bigl/ \sqrt{2}$ found by Xiao--Yuan in \cite[Theorem 1.4]{xiao2020holomorphic} corresponds to the irrational map $\iota$ obtained in \cref{thm:main1}. More precisely, it is possible to obtain a map $\check I$ from $I$, which sends $(\heis,0)$ to $(\lc,0)$. After that, bringing $\check I$ to the partial normal form given in \cref{thm:normalform}, it can be verified that the $4$-jets at the origin of this map and $\iota$ agree. Since the formulas involved are more complicated than in the rational case we refrain from displaying them here.
\end{rem}

\begin{example} 
\label{ex:holEqUpTo14}
The mapping equation \eqref{eq:mapeq} can be regarded as a system of infinitely many linear and nonlinear equations for the Taylor coefficients of the components of the map under consideration. Each solution to this system gives rise to holomorphic or formal CR map sending the germ $(\heis,0)$ into $(\lc, 0)$ as a subset of formally. We give an example of a holomorphic map whose Taylor coefficients solve all \emph{but} three equations of weight 14 as well as three holomorphic functional equations in Lemma \ref{lem:holeq2}, yet the map does \emph{not} send $\heis$ into $\lc$. Indeed, consider the following family of holomorphic maps
\[
H:=(f,\phi,g) = \left(z+ 2t zw^3, -4it z^2w^2, w+  t w^4\right), \ t\in \mathbb{R}.
\]
which satisfies the functional equations in Lemma \ref{lem:holeq2}. 
If $\Sigma_1 \colon (z,\zba, \wba) \mapsto (z, \wba +2 i z\zba )$ is a parametrization of the (complexification) of the Heisenberg hypersurface, then
\[
\widetilde{\vr} \circ H_t \circ \Sigma_1
=
-4 t^2 \wba ^3 z\zba (\wba +2 i z\zba)^3.
\]
Hence, $H_t$ sends $\heis$ into $\lc$ if and only if $t=0$.
\end{example}

%
\end{document}